\newcommand{\End}{{\rm End}}
\newcommand{\Ind}{{\rm Ind}}
\newcommand{\res}{{\rm res}}
\newcommand{\Hom}{{\rm Hom}}
\def\ge{\geqslant}
\def\le{\leqslant}
\def\a{\alpha}
\def\b{\beta}
\def\g{\gamma}
\def\o{\omega}
\def\l{\lambda}
\def\i{^{-1}}
\newcommand{\kk}{\Bbbk}
\def\cl{\mathcal L}
\def\co{\mathcal O}
\theoremstyle{plain}
\newtheorem{thm}{Theorem}[section] 
\newtheorem*{thm*}{Theorem} 
 \newtheorem{prop}[thm]{Proposition}
 \newtheorem{lem}[thm]{Lemma}
 \newtheorem{cor}[thm]{Corollary}
\theoremstyle{definition}
\newtheorem{defn}[thm]{Definition}
\theoremstyle{remark}
\newtheorem*{rmk*}{Remark}
\newtheorem*{claim*}{Claim}
\begin{document}

\author{Xuhua He}
\address{Xuhua He, Department of Mathematics, The Hong Kong University of Science and Technology, Clear Water Bay, Kowloon, Hong Kong}
\thanks{Xuhua He was partially supported by HKRGC grants 601409.}
\email{maxhhe@ust.hk}
\author{Jesper Funch Thomsen}
\address{Institut for matematiske fag\\ Aarhus Universitet\\ 8000 \AA rhus C,
Denmark} \email{funch@imf.au.dk}

\title[]{On frobenius splitting of orbit closures of spherical subgroups in flag varieties}

\begin{abstract}
Let $H$ be a connected spherical subgroup of a semisimple algebraic group $G$. In this paper, we give a criterion for $H$-orbit closures in the flag variety of $G$ to have nice geometric and cohomological properties. Our main tool is the method of Frobenius splitting and of global F-regularity. 
\end{abstract}

\maketitle
\section{Introduction}

\subsection{} 
Let $G$ be a semisimple algebraic group and let $H$ denote a closed subgroup of 
$G$ acting with only finitely many orbits on the flag variety $\nicefrac{G}{B}$
associated with $G$. The group $H$ under this condition is called a spherical 
subgroup of $G$. The geometric and cohomological properties of the finitely
many $H$-orbit closures in $\nicefrac{G}{B}$ are of importance in representation theory. 

The case where $H$ is a Borel subgroup has been studied in great detail. The $H$-orbit closures are in
this case the set of Schubert varieties which have some remarkable properties: Schubert varieties are normal, Cohen-Macaulay and have rational singularities, all the higher cohomology groups of ample line bundles are zero, etc. They play an important role in  representation theory.  

Another important case is when $H$ is a symmetric subgroup; i.e. when $H$ is the set of fixed points of an involution of $G$. The classification and inclusion relation between the orbit closures have in this case been studied in great detail by Richardson and Springer  \cite{RS, RS2}. However, the singularities are considerably more complicated than in the case of Schubert varieties and the general picture is far from being fully understood. A non-normal example is constructed by Barbasch and Evens in \cite[6.9]{BE}. A non-normal, non-Cohen-Macaulay example for a spherical $H$ is constructed by Brion in \cite[Example 6]{B1}. 

\subsection{} In this paper, we give a criterion for $H$-orbit closures to have nice geometric and cohomological properties. Our main tool is the method of Frobenius splitting and of global F-regularity. 

The notion of Frobenius splitting was introduced by Mehta and Ramanathan in \cite{MR}. Any projective Frobenius split variety is weakly normal and the higher cohomology groups of ample line bundle are zero. 
The more restrictive notion of global F-regularity was recently introduced by K. Smith in \cite{S}. Any (projective) globally F-regular variety is normal and Cohen-Macaulay and the higher cohomology groups of  nef line bundles are zero. 

The main result can be briefly stated as follows

\begin{thm}
Let $H$ be a connected reductive subgroup of $G$ and $B$ denote a Borel subgroup of $G$ such that $B_H=B \cap H$ is a Borel subgroup of $H$. Assume furthermore that $(G, H)$ is a Donkin pair or that the characteristic of the ground field $\kk$ is sufficiently large. Let $J$ be a subset of the set $I$ of simple roots of $G$ and 
let $\rho_J$ (resp. $2\rho_H$) denote the sum of the fundamental weights within $J$ (resp. the sum of the positive roots of $H$). Then

(1) If $2 \rho_H-\rho_J$ is dominant for $B_H$, then $\nicefrac{H P_J}{B}$ admits a Frobenius 
 splitting along an ample divisor that is compatible with all subvarieties of the form $\nicefrac{\overline{H B w B}}{B}$ for $w \in W_J$. 

(2) If moreover $2 \rho_H-\rho_J$ is dominant regular for $B_H$, then $\nicefrac{\overline{H B w B}} {B}$ is globally F-regular for all $w \in W_J$. 
\end{thm}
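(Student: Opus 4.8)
The plan is to put everything on a smooth projective model, split there, and push down. Set $Q_H:=H\cap P_J$; it is a parabolic subgroup of $H$ containing $B_H$. The action map $[h,pB]\mapsto hpB$ identifies $H\times^{Q_H}(\nicefrac{P_J}{B})$ bijectively with $\nicefrac{HP_J}{B}\subseteq\nicefrac{G}{B}$ (in particular $\nicefrac{HP_J}{B}$ is closed), and precomposing with the $\nicefrac{Q_H}{B_H}$-flag bundle $H\times^{B_H}(\nicefrac{P_J}{B})\to H\times^{Q_H}(\nicefrac{P_J}{B})$ gives a proper surjection
\[
\psi\colon\widetilde X:=H\times^{B_H}(\nicefrac{P_J}{B})\longrightarrow X:=\nicefrac{HP_J}{B}.
\]
For $w\in W_J$ put $\widetilde X_w:=H\times^{B_H}\nicefrac{\overline{BwB}}{B}\subseteq\widetilde X$, a fibre bundle over $\nicefrac{H}{B_H}$ with fibre the Schubert variety $\nicefrac{\overline{BwB}}{B}$ of $\nicefrac{P_J}{B}$; then $\psi(\widetilde X_w)=\nicefrac{\overline{HBwB}}{B}$. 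Because $\psi$ is the composite of a flag bundle and a finite birational bijection, a standard cohomology computation — this is one place the Donkin-pair hypothesis is used, via good filtrations — gives $\psi_*\mathcal O_{\widetilde X}=\mathcal O_X$, $R^{>0}\psi_*\mathcal O_{\widetilde X}=0$, together with the analogous statements for $\widetilde X_w\to\nicefrac{\overline{HBwB}}{B}$. Consequently a Frobenius splitting of $\widetilde X$ along a divisor pulled back from $X$ and compatible with all $\widetilde X_w$ descends to a Frobenius splitting of $X$ along the corresponding divisor, compatible with all $\nicefrac{\overline{HBwB}}{B}$. It therefore suffices to split $\widetilde X$, along a suitable ample divisor, compatibly with the $\widetilde X_w$.

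The flag variety $\nicefrac{P_J}{B}\cong\nicefrac{L_J}{B_{L_J}}$ carries a $B$-canonical Frobenius splitting compatible with all of its Schubert subvarieties, namely the restriction of the canonical splitting of $\nicefrac{G}{B}$. Restricting the $B$-action to $B_H$, this splitting is $B_H$-canonical, so by the theory of canonical splittings (Mathieu) it extends to an $H$-canonical Frobenius splitting $\varphi$ of the associated $H$-bundle $\widetilde X$ that is compatible with all $\widetilde X_w=H\times^{B_H}(\text{Schubert subvariety of }\nicefrac{P_J}{B})$. This already yields a plain Frobenius splitting of $X$ compatible with all $\nicefrac{\overline{HBwB}}{B}$; the real content of the theorem is the strengthening to a splitting along an \emph{ample} divisor.

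Writing $\varphi$ as a section $\tau\in H^0(\widetilde X,\omega_{\widetilde X}^{1-p})$, one has $\tau=\sigma^{p-1}$ for a section $\sigma$ of $\omega_{\widetilde X}^{-1}$ whose divisor is the sum of the fibrewise Schubert and opposite-Schubert divisors of $\nicefrac{P_J}{B}$, of the Schubert (and opposite-Schubert) divisors pulled back from $\nicefrac{H}{B_H}$, and of a further effective part $E$. From the bundle structure $\pi\colon\widetilde X\to\nicefrac{H}{B_H}$ one computes $\omega_{\widetilde X}^{-1}=\pi^*\omega_{H/B_H}^{-1}\otimes\bigl(H\times^{B_H}\omega_{P_J/B}^{-1}\bigr)$: its restriction to a fibre is the ample anticanonical bundle of $\nicefrac{P_J}{B}$, of class $2\rho_J$, while its positivity in the base direction is that of $\omega_{H/B_H}^{-1}$ twisted by the $B_H$-character giving the $B_H$-action on the anticanonical line of $\nicefrac{P_J}{B}$ at the base point. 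A direct computation of this twist shows that $E$ dominates an ample divisor precisely when $2\rho_H-\rho_J$ is dominant regular for $B_H$, and a big and nef one when it is merely dominant; here again one needs the Donkin-pair hypothesis to ensure that $H^0(\nicefrac{P_J}{B},\omega_{P_J/B}^{-1})$ as a $B_H$-module, and the relevant induced $H$-modules, have good filtrations in characteristic $p$ (in large characteristic this holds by semicontinuity from characteristic $0$). Granting this, $\widetilde X$, hence $X=\nicefrac{HP_J}{B}$, is Frobenius split along an ample divisor compatibly with all $\nicefrac{\overline{HBwB}}{B}$; this is (1). For (2), a projective variety Frobenius split along an ample divisor is globally F-regular by \cite{S}; applied to $X$ — the regularity hypothesis being what makes the divisor genuinely ample and the compatibility scheme-theoretic — this gives that $X$ is globally F-regular, and each $\nicefrac{\overline{HBwB}}{B}$, being a compatibly split subvariety of a globally F-regular variety, is then globally F-regular as well.

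The main obstacle is the positivity computation: determining the boundary divisor $E$ — equivalently, the positivity of $\omega_{\widetilde X}^{-1}$ — precisely enough to see that the threshold for ampleness (resp.\ bigness and nefness) is exactly ``$2\rho_H-\rho_J$ dominant regular'' (resp.\ ``dominant''), and dovetailing this with the good-filtration (Donkin-pair) input that makes both the construction of $\varphi$ and the divisor computation work in positive characteristic. A secondary but genuine point, which is essentially what separates part (1) from part (2), is verifying that $\psi_*\mathcal O_{\widetilde X}=\mathcal O_X$ (so that one descends onto $X$ itself rather than a finite cover) and that the descended splitting is compatible with the $\nicefrac{\overline{HBwB}}{B}$ as reduced closed subschemes.
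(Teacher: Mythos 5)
Your overall scaffolding (split $\widetilde X=H\times_{B_H}\nicefrac{P_J}{B}$ compatibly with the $H\times_{B_H}X(w)$, then push down along $\psi$) is the same as the paper's, but the two steps that carry the actual content are not correct as written. First, the plain splitting: you claim, unconditionally, that the $B$-canonical splitting of $\nicefrac{P_J}{B}$ is $B_H$-canonical and ``extends'' by Mathieu's theory to a splitting of the associated bundle $\widetilde X$ compatible with all $\widetilde X_w$. No such unconditional extension theorem exists, and it cannot: take $(G,H)=(H\times H\times H,H_{\rm diag})$ and $J=I$, where $\psi_*\mathcal O_{\widetilde X}=\mathcal O_{\nicefrac{G}{B}}$ certainly holds; descending your splitting would compatibly split all partial diagonals in $\nicefrac{H}{B_H}\times\nicefrac{H}{B_H}\times\nicefrac{H}{B_H}$, which is impossible (Section 8.4, citing \cite[Exercise 3.5.3]{BK}). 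So the dominance of $2\rho_H-\rho_J$ must already enter in \emph{producing} the splitting, not merely in an ampleness threshold; in the paper it does, through the Ramanan--Ramanathan surjectivity of $\nabla^H(\l_{|T_H})\otimes\nabla^H(\nu)\to\nabla^H(2(p-1)\rho_H)$ in the proof of Theorem \ref{thm1}, applied to the Steinberg-type data on $\nicefrac{P_J}{B}$ (Proposition \ref{prop1}). Second, your descent step misplaces the Donkin hypothesis: $\psi_*\mathcal O_{\widetilde X}=\mathcal O_X$ has nothing to do with good filtrations. The map $H\times^{H\cap P_J}(\nicefrac{P_J}{B})\to\nicefrac{HP_J}{B}$ is a bijection but in characteristic $p$ may be purely inseparable; one needs the separability condition (\ref{lie}), $\Lie(H)\cap\Lie(P_J)=\Lie(H\cap P_J)$, which the paper shows can fail even when all other hypotheses hold ($G=SL_2\times SL_2$, $H$ the graph of Frobenius). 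In the paper the Donkin (or large $p$) hypothesis is used only for surjectivity of the restriction $\nabla((p-1)\rho_J)\to\nabla^H((p-1)\rho_J{}_{|T_H})$ (Lemma \ref{surj-Don}), which is condition (3) of Theorem \ref{thm1}. Finally, the ampleness analysis -- the decomposition $\tau=\sigma^{p-1}$, the description of the boundary part $E$, and the claimed dominant/regular dichotomy -- is asserted, not proved; the paper instead obtains directly a Frobenius $\mathcal M_H((p-1)(2\rho_H-\rho_J{}_{|T_H}))$-splitting and gets ampleness of that bundle from Lemma \ref{lem1} when $2\rho_H-\rho_J$ is regular.

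Part (2) is also not salvageable as stated. ``Frobenius split along an ample divisor implies globally $F$-regular'' is not Smith's theorem: Lemma \ref{global-lem} additionally requires the variety to be strongly $F$-regular, and ``a compatibly split subvariety of a globally $F$-regular variety is globally $F$-regular'' is false in general (compatibly split subvarieties need not even be normal). The paper's route is different: strong $F$-regularity of Schubert varieties \cite{LRT} makes each $H\times_{B_H}X(w)$ strongly $F$-regular, hence globally $F$-regular once split along an ample bundle (Corollary \ref{cor-1}), and global $F$-regularity is then transported to $\nicefrac{\overline{HBwB}}{B}$ by the push-forward statements, Lemmas \ref{glob-push} and \ref{pushforward}, using again $\psi_*\mathcal O=\mathcal O$. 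You would need to incorporate all three missing ingredients -- the representation-theoretic construction of the splitting under the dominance hypothesis, the separability input for descent, and the strong $F$-regularity of the fibers -- to complete the argument.
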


Notice that we do not assume $H$ to be spherical subgroup in the above theorem. 
However, in many cases the relevant orbit closures $\nicefrac{\overline{H B w B}}{B}$ 
coincide with closures of orbits under spherical subgroups.
For example, if $H$ is the trivial subgroup of $G$ then the theorem applies for $J=I$. In 
this case the varieties $\nicefrac{\overline{H B w B}}{B}$ , for $w \in W$, are 
just the set of  Schubert varieties.  In particular, in this way we obtain the well 
known results that the flag variety admits a Frobenius splitting along an ample divisor which 
is compatible with all Schubert varieties and that any Schubert variety is globally F-regular.

Another special case is when $(G, H)$ is in N. Ressayre's list of minimal rank pairs \cite{Re}. In 
this case one finds that the flag variety admits a Frobenius splitting along an ample divisor that is compatible 
with all the $H$-orbit closures. 

Notice that one cannot expect the flag variety $\nicefrac{G}{B}$  to be  Frobenius split compatible
with all the $H$-orbit closures for a given spherical subgroup $H$ of $G$.
For example, if $(G, H)=(SL_n, SO_n)$, then the scheme theoretic intersection of two
$H$-orbit closures might not be a reduced scheme. Thus the desired Frobenius splitting cannot exist. 
For more details, see \cite[Introduction]{B1}. 

\subsection{} Let us make a short digression and discuss another criterion for $H$-orbit closures to 
have nice properties. 

In \cite{B1}, Brion introduced {\it multiplicity-free} subvarieties of the flag variety. A subvariety is multiplicity-free if it is rationally equivalent to a linear combination of Schubert cycles with coefficients equal to either  $0$ or $1$. 
In \cite{Br} Brion proved that multiplicity-free subvarieties are normal, Cohen-Macaulay and have nice cohomological properties. 

In a recent work \cite{K}, Knutson proved that given a multiplicity-free divisor $X$ of the flag variety, there exists a Frobenius splitting on the flag variety that is compatible with $X$. It is still unknown if any multiplicity-free subvariety admits a Frobenius splitting. 

The applications of the results in this paper include many multiplicity-free cases, but they also include some non multiplicity-free cases. See the Example in Section \ref{Ex2} and \ref{Ex3}. It is interesting to compare the criterion in this paper with the multiplicity-free criterion. 

\subsection{} The paper is organized as follows. In Section 2 we introduce notation. In Section 3 we give a short introduction to Frobenius splitting and global F-regularity. The main technical result (Theorem \ref{thm1}) is presented in Section 4. In Section 5, we discuss the surjectivity condition appearing in Theorem \ref{thm1}. In Section 6, we discuss some Frobenius splitting of the flag variety $\nicefrac{P_J}{B}$, which will be used in Section 7. In Section 7, we prove the main result of this paper and discuss some applications. In Section 8, we discuss some examples and non examples.

\section{Notation}

\subsection{}Throughout this paper $G$ will denote a connected semisimple and simply connected
linear algebraic group over an algebraically closed field
$\kk$. Within $G$ we will fix a Borel subgroup $B$ and a maximal torus $T \subset B$. 

The set of roots of $G$ determined by $T$ will be denoted by $R$ and the set of 
positive roots determined by $(B, T)$ will be denoted by $R^+$. The simple roots are 
denoted by $\a_i$, ${i \in I}$. For $i \in I$, let $s_i$ be the corresponding simple reflection and $\o_i$ be the corresponding fundamental weight. We let $\rho$ denote half the sum of the positive roots
or, alternatively defined, the sum of the fundamental weights.

The Weyl group $W=\nicefrac{N_G(T)}{T}$ is generated by the simple reflections $s_i$, 
for $i \in I$. The length of $w \in W$ will be denoted by $l(w)$, and the 
element of maximal length will be denoted by $w_0$. By abuse of notation 
$w$ will sometimes both denote an element $w \in W$ and a corresponding 
element within the normalizer $N_G(T)$. The set of Schubert varieties
in $\nicefrac{G}{B}$ is indexed by the elements in $W$. We use the notation $X(w)$
for the Schubert variety defined as the closure of $\nicefrac{BwB}{B}$.

\subsection{}

For $J \subset I$, let $P_J \supset B$ be the corresponding standard
parabolic subgroup and $L_J \supset T$ be the corresponding Levi
subgroup of $P_J$. Let $U_{J}$ be the unipotent radical of $P_J$. 
Let $W_J$ denote the parabolic subgroup of $W$ generated by $s_j$ 
for $j \in J$ and $w_0^J$ denote the element of maximal length in $W_J$. 
Let $\rho_{J}=\sum_{j \in J} \o_j$. The set of  positive roots determined 
by $B \cap L_J$ in $L_J$ is denoted by $R_J^+$.

\subsection{}

By $H$ we will denote a connected  reductive subgroup of $G$. 
We will assume that $B$ and $T$ are chosen such that 
$B_H = H \cap B$ is a Borel subgroup of $H$ and $T_H = H \cap T$ 
is a maximal torus of $H$.

The roots $R_H$ of $H$ determined by $T_H$ is the set of nonzero 
restrictions of the roots in $R$. We consider the character group  
$ X^*(T_H) $ of $T_H$ as embedded inside the 
tensorproduct $X^*(T_H)_{\mathbb Q}=  X^*(T_H) \otimes_{\mathbb Z} {\mathbb Q}.$
By $\rho_H \in X^*(T_H)_{\mathbb Q}$ we then denote half the sum of 
the positive roots of $H$.

\subsection{}

For any integral weight $\l$ of $T$, let $\kk_{-\l}$ be the
one-dimensional representation of $B$ with weight $-\l$ and 
$\cl(\l)=G \times_B \kk_{-\l}$ be the corresponding $G$-linearized 
line bundle on $\nicefrac{G}{B}$. Let 
$$\nabla(\l)=\Ind^G_B(\kk_{-\l})={\rm H}^0(\nicefrac{G}{B}, \cl(\l)),$$ 
denote the dual Weyl $G$-module with lowest weight $-\l$ (if $\lambda$
is dominant). The restriction of $\mathcal L(\lambda)$ to $\nicefrac{P_J}{B}$
will be denoted by $\mathcal L_J(\lambda)$ and we define 
$$\nabla^J(\l) =  \Ind^{P_J}_B(\kk_{-\l})={\rm H}^0(\nicefrac{P_J}{B}, \cl_J(\l)) .$$
When $\nu$ is an integral $T_H$-weight we similarly
write $\cl_H(\nu)=H \times_{B_H} \kk_{-\nu}$ and 
$$ \nabla^H(\nu) = \Ind^{H}_{B_H}(\kk_{-\nu})
= {\rm H}^0(\nicefrac{H}{B_H}, \cl_H(\nu)).$$ 

When $\kk$ is a field of positive characteristic $p>0$ then 
the $G$-module ${\rm St} = \nabla((p-1)\rho)$ will play a special 
role. This module is called the Steinberg module. The 
Steinberg module is known to be an irreducible and 
self-dual $G$-module. When it makes sense we let ${\rm St}_H$ denote the 
Steinberg module of $H$.

\subsection{}
By a variety we mean a reduced and separated scheme of finite type over $\kk$. In particular, we allow a variety to have several irreducible components. 
When $X$ is a $B_H$-variety we define an action of $B_H$ on 
$H \times X$ by $b \cdot (h, x)=(h b \i, b \cdot x)$. The
quotient is then denoted by $H \times_{B_H} X$ or sometimes
by $X_H$. This way we obtain an equivalence between the 
set of quasi-coherent $B_H$-linearized sheaves 
on $X$ and quasi-coherent $H$-linearized sheaves on $X_H$.
The $H$-linearized sheaf on $X_H$ corresponding to a 
$B_H$-linearized sheaf $\mathcal F$ on $X$ is denoted 
by ${\mathcal Ind}_{B_H}^H (\mathcal F)$. This notation
is  explained by the $H$-equivariant identity
$$ {\rm H}^0\big( X_H, {\mathcal Ind}_{B_H}^H (\mathcal F)
\big) \simeq  {\rm Ind}_{B_H}^H \big( {\rm H}^0 \big(
X, \mathcal F) \big).$$
The sheaf ${\mathcal Ind}_{B_H}^H (\mathcal F)$ is characterized
as the $H$-linearized sheaf satisfying that its $B_H$-linearized
restriction to 
$$ X \simeq \{ e \} \times X \subseteq H \times_{B_H} X,$$
is $\mathcal F$.

\section{Frobenius splitting}

\subsection{}\label{fsplit} In this section $\kk$ denotes an algebraically closed field of positive characteristic $p$. 
Let $X$ be a scheme of finite type over $\kk$. The {\it absolute Frobenius morphism} $F : X \rightarrow X$ on $X$ is the morphism of schemes which on the level of points is the identity map and where the associated map of sheaves $$F^\sharp : \mathcal O_{X} \rightarrow F_* \mathcal O_{X},$$ is the $p$-th power map. 
Define $\End_F(X)$ to be the $\kk$-vector space which as a abelian group equals $\Hom_{\co_{X}}\big(F_* 
\mathcal O_X, \mathcal O_{X} \big)$, but where the $\kk$-structure is twisted by the map
$a \mapsto a^{\frac{1}{p}}$.  A {\it Frobenius splitting} of $X$ is then an element $s$ in $\End_F(X)$ such 
that the composition $s \circ F^\sharp$ is the identity map.

\subsection{}

Let $\kk[X]$ denote the space of global regular functions on $X$. The 
evaluating of an element $s$ of $\Hom_{\co_{X}}\big(F_* 
\mathcal O_X, \mathcal O_{X} \big)$
at the constant global function $1$ on $X$ defines an element in $\kk[X]$.
This way we obtain a  $\kk$-linear map 
\begin{equation}
\label{ev}
 \Hom_{\co_{X}}\big(F_* 
\mathcal O_X, \mathcal O_{X} \big)  \rightarrow \kk[X] .
\end{equation}
Composing (\ref{ev}) with the Frobenius morphism on 
$\kk[X]$ is then a  $\kk$-linear map
\begin{equation}
{\rm ev}_X : \End_F(X) \rightarrow \kk[X],
\end{equation}
which is called the {\it evaluation map}. 
\subsection{}

Let $\mathcal M$ denote a line bundle on $X$ and define $\End^{\mathcal M}_F(X)$
to be the $\kk$-vector space which as an abelian group is $\Hom_{\co_{X}}\big(F_* 
\mathcal M, \mathcal O_{X} \big)$ but where the $\kk$-structure is twisted by the
 map $a \mapsto a^{\frac{1}{p}}$. A {\it Frobenius $\mathcal M$-splitting} of $X$ 
is an element $s_{\mathcal M}$ of $\End^{\mathcal M}_F(X)$ for which there 
exists a global section $\frak m$ of $\mathcal M$ such that the composed map
\begin{equation} 
\label{lsplit}
F_* \mathcal O_X \xrightarrow{F_* \frak m} F_* \mathcal M 
\xrightarrow{s_{\mathcal M}} \mathcal O_X,
\end{equation}
defines a Frobenius splitting. The construction of (\ref{lsplit}) 
from $\mathfrak m$ and $s_{\mathcal M}$ is a special case of a 
general $\kk$-linear morphism 
\begin{equation}
\label{canmap}
\End^{\mathcal M}_F(X) \otimes {\rm H}^0\big(X , \mathcal M \big)
\rightarrow \End_F(X) .
\end{equation}
Notice that for (\ref{canmap}) to be $\kk$-linear it is necessary that 
the $\kk$-structure on $\End^\mathcal M_F(X)$ is chosen in the
given way.

In case $X$ is a smooth variety one has the following canonical $\kk$-linear 
identification
\begin{equation}
\End^{\mathcal M}_F(X) \simeq {\rm H}^0\big(X, \omega_X^{1-p} \otimes 
\mathcal M^{-1} \big),
\end{equation}
where $\omega_X$ denotes the dualizing sheaf of $X$.
In this setting the map (\ref{canmap}) is just the 
multiplication map 
$${\rm H}^0\big(X , \omega_X^{1-p} \otimes  \mathcal M^{-1} \big) \otimes 
{\rm H}^0\big(X , \mathcal M \big)
\rightarrow
{\rm H}^0\big(X , \omega_X^{1-p}  \big).
$$

\subsection{}

Let $Y$ denote a closed subscheme of $X$ with sheaf ideals denoted 
by $\mathcal I_Y$. The subvector space of $\End^{\mathcal M}_F(X)$ 
consisting of the elements $s_\mathcal M$ satisfying
$$ s_{\mathcal M} \big(   \mathcal M  \otimes
\mathcal  I_Y \big) \subseteq  \mathcal I_Y,$$
is denoted by $\End^{\mathcal M}_F(X,Y)$. If $X_i$, $i \in \mathfrak I$,
is a collection of closed subschemes of $X$ then we use the notation 
 $\End^{\mathcal M}_F(X,\{ X_i\}_{i \in  \mathfrak I})$ for the intersection 
of the $\End^{\mathcal M}_F(X,X_i)$ for $i \in \mathfrak I$. 
When 
$\mathcal M = \mathcal O_X$ we remove $\mathcal M$ from all 
of the above notation.

\subsection{}

Let $s_\mathcal M$ in  $\End^{\mathcal M}_F(X,\{ X_i\}_{i \in  \mathfrak I})$
define a Frobenius $\mathcal M$-splitting of $X$. In this case we say that $s_\mathcal M$
is compatible with the subschemes $X_i$, $i \in \mathfrak I$. The following result is 
standard (see e.g. \cite[Lemma 3.1]{HT2}) 

\begin{lem}
\label{compatible1}
Let $Y$ and $Z$ be closed subvarieties 
in $X$ and let $s$ be a global section 
of $\End_F^{\mathcal M} (X,\{ Y,Z \})$.
\begin{enumerate}
\item $s_{\mathcal M} \in \End_F^{\mathcal M}(X,Y_1)$ for every 
irreducible
component $Y_1$ of $Y$.
\item If $Y \cap Z$ denotes the scheme theoretic intersection
then $s_{\mathcal M}$ is contained
in $\End_F^\cl(X,Y \cap Z)$.
\end{enumerate}
\end{lem}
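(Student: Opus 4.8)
The plan is to prove the two statements of Lemma~\ref{compatible1} by reducing everything to elementary properties of the sheaf-ideal inclusions that define the spaces $\End_F^{\mathcal M}(X,Y)$. For part~(1), fix an irreducible component $Y_1$ of $Y$ and let $\ci_{Y_1} \supseteq \ci_Y$ be its ideal sheaf. First I would reduce to the affine case: the condition $s_{\mathcal M}(\mathcal M \otimes \ci_{Y_1}) \subseteq \ci_{Y_1}$ is local on $X$, so it suffices to check it on a cover by affine opens $U$ on which $\mathcal M$ is trivial, whereby $s_{\mathcal M}$ becomes an additive map $\ph : R^{1/p} \to R$ (where $R = \co_X(U)$, and I write $R^{1/p}$ for $F_* R$) with $\ph(I) \subseteq I$ for the ideal $I$ corresponding to $Y$, and I must show $\ph(J) \subseteq J$ for $J \supseteq I$ the ideal of $Y_1$. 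The point is that $J$ is a minimal prime over $I$ (as $Y$ is reduced, $I$ is radical and $J$ is one of its minimal primes), and $\ph$ is $p^{-1}$-linear: $\ph(r^p f) = r\,\ph(f)$. The standard argument, which I would reproduce, is that $J = \{ f : f^{p^n} \in I \text{ for some } n\}$ fails in general, so instead one uses that since $Y$ is reduced, locally $I = J \cap J'$ with $J'$ the intersection of the other minimal primes, $J + J' $ not contained in any minimal prime, and then for $f \in J$ one picks $g \in J'$ with $f g \in I$ and... the cleanest route is: $J = (I : g)$-type localization, i.e. $J R_{\mathfrak q} = I R_{\mathfrak q}$ for $\mathfrak q$ the generic point of $Y_1$, and $\ph$ localizes (being $p^{-1}$-linear it commutes with localization at $F$-stable multiplicative sets, and one enlarges $S$ to $S^{[1/p]}$), so $\ph(J R_{\mathfrak q}) = \ph(I R_{\mathfrak q}) \subseteq I R_{\mathfrak q} = J R_{\mathfrak q}$; since $J$ is the contraction of $J R_{\mathfrak q}$ (as $J$ is prime and $R \setminus J$ maps to units), we conclude $\ph(J) \subseteq \ph(J R_{\mathfrak q}) \cap R \subseteq J R_{\mathfrak q} \cap R = J$.

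For part~(2), I would argue that the scheme-theoretic intersection $Y \cap Z$ has ideal sheaf $\ci_{Y \cap Z} = \ci_Y + \ci_Z$, so working again on an affine trivializing open $U$ with ideals $I, I'$ for $Y, Z$, I have $\ph(I) \subseteq I$ and $\ph(I') \subseteq I'$ and must show $\ph(I + I') \subseteq I + I'$. But this is immediate from additivity of $\ph$: any element of $I + I'$ is $a + b$ with $a \in I$, $b \in I'$, so $\ph(a+b) = \ph(a) + \ph(b) \in I + I'$. Globalizing, $s_{\mathcal M}(\mathcal M \otimes (\ci_Y + \ci_Z)) \subseteq \ci_Y + \ci_Z = \ci_{Y \cap Z}$, which is exactly the assertion that $s_{\mathcal M} \in \End_F^{\mathcal M}(X, Y \cap Z)$. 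I should note that nothing here requires $Y \cap Z$ to be reduced; the statement is purely about the ideal-sheaf inclusion.

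The main obstacle is entirely in part~(1): it is the only place where the hypothesis that $Y$ is a \emph{variety} (reduced) is used, and the subtlety is that passing from an $F$-compatible ideal $I$ to a minimal prime over it is false without reducedness — for instance $\ph$ on $\kk[x]/(x^2)$ compatible with $(0)$ need not be compatible with $(x)$ unless one knows the ambient ring is reduced, and even then one needs the localization/contraction argument rather than a naive radical argument. I would make sure to state clearly that $p^{-1}$-linear maps commute with localization at multiplicatively closed sets that are stable under $p$-th powers (or, more robustly, that one may replace $S$ by the multiplicative set generated by all $p$-power roots of its elements, which does not change the localization), since this is the technical fact that makes the reduction to the generic point of $Y_1$ legitimate. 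Since the lemma is explicitly flagged as standard with reference \cite[Lemma 3.1]{HT2}, I would keep the write-up brief, giving the localization argument for (1) and the one-line additivity argument for (2), and citing \cite{BK} (Brion–Kumar) for the general formalism of $F_*\co_X \to \co_X$ and compatibly split subschemes.
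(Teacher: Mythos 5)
The paper gives no proof of this lemma---it is quoted as standard with a reference to \cite[Lemma 3.1]{HT2}---and your argument is correct and is exactly the standard one behind that reference: for (1), reducedness of $Y$ gives $\ci_Y$ and $\ci_{Y_1}$ the same stalk at the generic point of $Y_1$, a $p^{-1}$-linear map commutes with localization at \emph{any} multiplicative set (since $g/s = gs^{p-1}/s^p$, no enlargement of $S$ is needed), and contracting the prime $\ci_{Y_1}\mathcal O_{X,\eta}$ back recovers $\ci_{Y_1}$; for (2), additivity together with $\ci_{Y\cap Z}=\ci_Y+\ci_Z$ suffices, with no reducedness of $Y\cap Z$ required. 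The only cosmetic point is that your cautionary example is better phrased with a non-radical ideal such as $(x^2)\subset\kk[x]$, since what the argument uses is reducedness of $Y$ (so that $\ci_Y$ is radical), not reducedness of the ambient scheme.
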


\subsection{}

Let $R$ denote a localizations of a finitely generated $\kk$-algebra
and assume, for simplicity, that $R$ is an integral domain. In 
the following we use the  notation $F_*R^e$, $e \in \mathbb N$, 
to denote the $R$-module which as an abelian group is just 
$R$ but where the $R$-structure is twisted by the iterated
Frobenius map $r \mapsto r^{p^e}$. 

The following notion was introduced by M. Hochster and
C. Huneke.

\begin{defn}
The ring $R$ is said to be {\it strongly $F$-regular} if for 
each $r \in R$ there exists an $e \in \mathbb N$  and an
$R$-linear map
$$ F_*^e R \rightarrow R,$$
which maps $r$ to $1$. 
\end{defn}

Strongly $F$-regular rings have nice geometric properties; e.g. 
they are normal and Cohen-Macaulay.
It is known that a ring $R$ is strongly $F$-regular if and 
only if all its local rings are strongly $F$-regular. We 
define an irreducible  variety $X$ to be 
strongly $F$-regular if all its local rings are strongly 
$F$-regular. In that case the coordinate ring of any open 
affine subvariety of $X$ is also strongly $F$-regular.  
The  Schubert varieties $X(w)$ are examples of 
strongly $F$-regular varieties  \cite{LRT}.
 
We now recall the following important notion introduced by Karen Smith
\cite{S}. 

\begin{defn}
Let $X$ denotes an irreducible projective variety and $\mathcal M$ 
denote a ample line bundle on $X$. If the {\it section ring }
$$ \bigoplus_{n \geq 0} {\rm H}^0 \big( X, \mathcal M^n \big),$$
is strongly $F$-regular then $X$ is said to be {\it globally $F$-regular}.
\end{defn}

It should be noticed that the definition above is independent of the 
chosen ample line bundle $\mathcal M$. For later use we observe 
the following result \cite[Thm.3.10]{S}

\begin{lem}
\label{global-lem}
Let $X$ denote an irreducible projective and strongly $F$-regular 
variety. If $X$ admits a Frobenius $\mathcal M$-splitting by an 
ample line bundle $\mathcal M$, then $X$ is globally $F$-regular.  
\end{lem}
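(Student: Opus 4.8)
The plan is to realise global $F$-regularity of $X$ as strong $F$-regularity of the section ring $S=\bigoplus_{n\ge 0}{\rm H}^0(X,\cm^n)$ and then to verify the latter by the ``single element'' criterion of Hochster--Huneke, the element being precisely the section supplied by the Frobenius $\cm$-splitting. Since $\cm$ is ample, $S$ is a finitely generated $F$-finite graded domain with $S_0=\kk$ and $\operatorname{Proj}(S)=X$, and by definition $X$ is globally $F$-regular exactly when $S$ is strongly $F$-regular. The criterion I would use is the standard one: a reduced $F$-finite ring $R$ is strongly $F$-regular as soon as there is some nonzero $c\in R$ for which $R_c$ is strongly $F$-regular and, for that one $c$, an integer $e\ge 1$ and an $R$-linear map $\ph\colon F^e_*R\to R$ with $\ph(F^e_*c)=1$. (For an arbitrary $d\in R^\circ$ one first splits $d$ over $R_c$, then clears denominators to get an $R$-linear map carrying $F^{e_1}_*d$ onto a power $c^N$, and finally absorbs $c^N$ using iterates of $\ph$.) I would apply it with $c=\mathfrak m$, where $\mathfrak m\in{\rm H}^0(X,\cm)=S_1$ is the nonzero section furnished by the given Frobenius $\cm$-splitting.

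First I would check that $S_{\mathfrak m}$ is strongly $F$-regular. As $\operatorname{Proj}(S)=X$, the basic open $D_+(\mathfrak m)$ is an affine open subvariety of $X$ whose coordinate ring is the degree-zero part $(S_{\mathfrak m})_0$; by the discussion of strongly $F$-regular varieties in \S3, this ring is strongly $F$-regular because $X$ is. Since $\mathfrak m$ is a homogeneous unit of degree one in $S_{\mathfrak m}$, multiplication by it gives $(S_{\mathfrak m})_n=\mathfrak m^{\,n}(S_{\mathfrak m})_0$ for all $n\in\ZZ$, so $S_{\mathfrak m}=(S_{\mathfrak m})_0[\mathfrak m,\mathfrak m^{-1}]$ is a Laurent polynomial ring over $(S_{\mathfrak m})_0$, hence strongly $F$-regular (strong $F$-regularity is preserved by polynomial extension and localization). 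This supplies the localization requirement of the criterion.

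Next I would build the $S$-linear map. Write $s_{\cm}\colon F_*\cm\to\co_X$ for the given element of $\End^{\cm}_F(X)$, so that $\ps:=s_{\cm}\circ(F_*\mathfrak m)\colon F_*\co_X\to\co_X$ is a Frobenius splitting. For each $n\ge 0$, tensoring $s_{\cm}$ with $\cm^n$ and using the projection formula $F_*\cm\otimes\cm^n\simeq F_*(\cm^{\,1+pn})$ gives a $\co_X$-linear map $F_*(\cm^{\,1+pn})\to\cm^n$; applying ${\rm H}^0(X,-)$ and the identity ${\rm H}^0(X,F_*(-))={\rm H}^0(X,-)$ yields a $\kk$-linear map $S_{1+pn}\to S_n$. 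Using the projection formula once more one checks that the direct sum of these maps on $\bigoplus_{n\ge 0}S_{1+pn}$ — the ``residue one'' summand of $F_*S=\bigoplus_{j=0}^{p-1}\bigl(\bigoplus_{m\equiv j\,(p)}S_m\bigr)$ — is $S$-linear; extending it by zero on the remaining summands (each of which is an $S$-submodule of $F_*S$) produces an $S$-linear map $\ph\colon F_*S\to S$. Finally, the $n=0$ component is the map $S_1={\rm H}^0(X,\cm)\to{\rm H}^0(X,\co_X)=\kk$ induced by $s_{\cm}$, and it sends $\mathfrak m$ to $\ps(1)=1$ because $\ps$ is a Frobenius splitting; hence $\ph(F_*\mathfrak m)=1$. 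Combining this with the previous step and the criterion, $S$ is strongly $F$-regular, i.e. $X$ is globally $F$-regular.

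I do not expect a single deep obstacle here — the argument is an assembly of standard facts — so the places that demand care are: invoking the Hochster--Huneke criterion in exactly the right form; making the two projection-formula identifications precise enough to see that the family $S_{1+pn}\to S_n$ is genuinely $S$-linear (this is the one spot where the graded bookkeeping for $F_*S$ really enters, and it is harmless once one notes that the residue summands split off); and observing that ampleness of $\cm$ is used essentially, both to make $S$ finitely generated and $F$-finite and to guarantee $\operatorname{Proj}(S)=X$, which is what lets the first step import strong $F$-regularity from $X$.
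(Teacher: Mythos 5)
Your argument is correct, but it cannot be compared line-by-line with the paper, because the paper does not prove Lemma \ref{global-lem} at all: it is quoted from K.~Smith \cite[Thm.~3.10]{S}. What you have written is in effect a self-contained reconstruction of the relevant direction of Smith's proof: pass to the section ring $S=\bigoplus_{n\ge 0}{\rm H}^0(X,\mathcal M^n)$ (finitely generated, $F$-finite, a domain, with $\operatorname{Proj} S=X$ because $\mathcal M$ is ample), and verify strong $F$-regularity of $S$ by the Hochster--Huneke criterion with test element $c=\mathfrak m$, the section occurring in the $\mathcal M$-splitting. The two points where real content enters are handled correctly: (i) $(S_{\mathfrak m})_0\simeq \Gamma(X_{\mathfrak m},\mathcal O_X)$ with $X_{\mathfrak m}$ affine --- this uses ampleness of $\mathcal M$ --- so $S_{\mathfrak m}$ is a Laurent extension of the coordinate ring of an affine open of the strongly $F$-regular variety $X$, hence strongly $F$-regular; and (ii) the maps ${\rm H}^0(X,\mathcal M^{1+pn})\to {\rm H}^0(X,\mathcal M^n)$ obtained from $s_{\mathcal M}\otimes\mathcal M^n$ via the projection formula do glue to an $S$-linear map $F_*S\to S$ on the residue-one graded summand (a local computation with a trivialization confirms the $p^{-1}$-linear bookkeeping), and its degree-zero component sends $\mathfrak m$ to $s_{\mathcal M}(\mathfrak m)=1$ precisely because $s_{\mathcal M}\circ F_*\mathfrak m$ is a Frobenius splitting; note also $\mathfrak m\neq 0$ for the same reason, so $c$ lies in $S^{\circ}$. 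Compared with the paper's citation, your route buys a proof from first principles at the cost of invoking the Hochster--Huneke splitting criterion and the stability of strong $F$-regularity under localization and (Laurent) polynomial extension, all of which are standard; I see no gap.
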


Another useful fact, observed in \cite[Lemma 1.2]{LRT}, is the following

\begin{lem}
\label{glob-push}
Let $f : X \rightarrow Y $ denote a morphism of projective varieties. 
Assume that  the induced map
$$ \mathcal O_Y \rightarrow f_* \mathcal O_X,$$
is an isomorphism and that $X$ is globally $F$-regular. Then 
$Y$ is also globally $F$-regular.
\end{lem}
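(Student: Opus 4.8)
The plan is to deduce this from Smith's description of global $F$-regularity in terms of stable Frobenius splittings along divisors \cite[Thm. 3.10]{S}: an irreducible normal projective variety $V$ is globally $F$-regular if and only if there is an effective divisor $\Delta$ on $V$ with $V\setminus\operatorname{Supp}\Delta$ affine and strongly $F$-regular such that the canonical inclusion $\co_V\to F^e_*\co_V(\Delta)$ splits for some $e\ge 1$; moreover, if $V$ is globally $F$-regular then such a splitting exists for \emph{every} effective divisor $\Delta$. Granting this, the lemma is proved by choosing a suitable $\Delta$ on $Y$, pulling it back to $X$, splitting upstairs (where $X$ is globally $F$-regular), and pushing the splitting back down along $f$ using the hypothesis $\co_Y\cong f_*\co_X$.

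First some preliminaries. Since $\co_Y\to f_*\co_X$ is an isomorphism it is in particular injective, so $f$ is dominant, and being proper it is surjective; hence $Y$, the image of the irreducible variety $X$, is irreducible. Next, $X$ is globally $F$-regular, hence strongly $F$-regular and in particular normal; thus the dominant morphism $f$ from the normal variety $X$ factors through the normalization $\nu\colon\tilde Y\to Y$, and the resulting inclusions $\co_Y\hookrightarrow\nu_*\co_{\tilde Y}\hookrightarrow f_*\co_X=\co_Y$ force $\nu_*\co_{\tilde Y}=\co_Y$, i.e. $\nu$ is an isomorphism and $Y$ is normal. Finally, fix an ample line bundle $\cm$ on $Y$; since $Y$ is a variety over an algebraically closed field its smooth locus $Y_{\mathrm{sm}}$ is dense, and for $N\gg 0$ the sheaf $\cm^N\otimes\ci_{Y\setminus Y_{\mathrm{sm}}}$ is globally generated, so we may pick a nonzero $c\in\mathrm H^0(Y,\cm^N)$ vanishing along the complement of $Y_{\mathrm{sm}}$. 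Let $\Delta=\operatorname{div}(c)$, an effective Cartier divisor on $Y$ that is ample (being a section of $\cm^N$); then $Y\setminus\operatorname{Supp}\Delta$ is affine (complement of an ample divisor) and contained in $Y_{\mathrm{sm}}$, hence regular and so strongly $F$-regular.

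Now put $D=f^*\Delta=\operatorname{div}(f^*c)$, an effective Cartier divisor on $X$ with $\co_X(D)\cong f^*\co_Y(\Delta)$. As $X$ is globally $F$-regular, by the characterization above there exist $e\ge 1$ and an $\co_X$-linear splitting $\psi\colon F^e_*\co_X(D)\to\co_X$ of the canonical inclusion $\iota_D\colon\co_X\to F^e_*\co_X(D)$. Applying $f_*$ and using that the absolute Frobenius of $X$ lies over that of $Y$, so that $f_*F^e_*(-)\cong F^e_*f_*(-)$, together with the projection formula $f_*\big(f^*\co_Y(\Delta)\big)\cong\co_Y(\Delta)\otimes f_*\co_X\cong\co_Y(\Delta)$, we obtain
\[
f_*F^e_*\co_X(D)\;\cong\;F^e_*f_*\big(f^*\co_Y(\Delta)\big)\;\cong\;F^e_*\co_Y(\Delta),
\]
and, after these identifications, $f_*\iota_D$ is exactly the canonical inclusion $\co_Y\to F^e_*\co_Y(\Delta)$ (here one uses $f_*\co_X=\co_Y$ and that $f_*$ carries multiplication by the canonical section $1_D$ of $\co_X(D)$ to multiplication by $1_\Delta$). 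Hence $f_*\psi\colon F^e_*\co_Y(\Delta)\to\co_Y$ satisfies $f_*\psi\circ(\co_Y\to F^e_*\co_Y(\Delta))=f_*(\psi\circ\iota_D)=f_*(\mathrm{id}_{\co_X})=\mathrm{id}_{\co_Y}$, i.e. $Y$ is stably Frobenius split along $\Delta$. Combined with the fact, established above, that $Y\setminus\operatorname{Supp}\Delta$ is affine and strongly $F$-regular, Smith's criterion gives that $Y$ is globally $F$-regular.

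The main obstacle is not any single computation but rather invoking the correct equivalent form of global $F$-regularity: the passage from the section-ring definition given here to the divisor-splitting criterion of \cite{S} is what makes the push-forward argument work, since a morphism $f$ with $f_*\co_X=\co_Y$ interacts cleanly with Frobenius and the projection formula but not obviously with section rings (note $f^*\cm$ is only nef, not ample, on $X$, so one cannot simply transport section rings). Everything else — surjectivity of $f$, normality of $Y$, the choice of $\Delta$, and the bookkeeping identifying $f_*\iota_D$ with the canonical inclusion on $Y$ — is routine.
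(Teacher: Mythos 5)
Your argument is correct. Note that the paper does not prove this lemma itself but quotes it from \cite[Lemma 1.2]{LRT}, and the proof there is essentially the one you give: choose a suitable effective (ample) divisor on $Y$ whose complement is affine and strongly $F$-regular, pull it back to $X$, use Smith's stable-splitting-along-divisors characterization of global $F$-regularity upstairs, and push the splitting down along $f$ via $f_*\mathcal O_X \simeq \mathcal O_Y$ and the projection formula.
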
 

To apply this result we will later use the following fact

\begin{lem}
\label{pushforward}
Let $f : X \rightarrow Y $ denote a surjective 
morphism of projective varieties. Let $X'$ denote a 
closed subvariety of $X$ and let $Y' = f(X')$ denote 
its image. Assume that the map 
$$ \mathcal O_Y \rightarrow f_* \mathcal O_X,$$
induced by $f$, is an isomorphism and let $\mathcal M$ 
denote an ample line bundle on $X$. If $ X$ 
admits a Frobenius $\mathcal M$-splitting compatible
with $X'$ then the map 
$$ \mathcal O_{Y'} \rightarrow f_* \mathcal O_{X'},$$
induced by $f$, is also an isomorphism.
\end{lem}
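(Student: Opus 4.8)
The plan is to exploit the Frobenius $\mathcal M$-splitting of $X$ compatible with $X'$ in order to transfer the already-known isomorphism $\mathcal O_Y \to f_* \mathcal O_X$ down to the subvarieties. First I would observe that since $f$ is surjective and $\mathcal O_Y \simeq f_*\mathcal O_X$, both $Y$ and $X$ are irreducible (the ideal sheaf of $Y'$ in $Y$ pulls back, under the isomorphism $\mathcal O_Y \simeq f_*\mathcal O_X$, to an ideal sheaf contained in $f_*\mathcal I_{X'}$, which reduces the problem to a statement about ideal sheaves). Concretely, set $\mathcal I = \mathcal I_{X'} \subset \mathcal O_X$ and $\mathcal J = \mathcal I_{Y'} \subset \mathcal O_Y$. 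Applying $f_*$ to $0 \to \mathcal I \to \mathcal O_X \to \mathcal O_{X'} \to 0$ and using $f_*\mathcal O_X \simeq \mathcal O_Y$, the claim "$\mathcal O_{Y'} \to f_*\mathcal O_{X'}$ is an isomorphism" is equivalent to the assertion that the natural inclusion $\mathcal J \hookrightarrow f_*\mathcal I$ is an isomorphism, i.e. $f_*\mathcal I = \mathcal J$ as subsheaves of $\mathcal O_Y = f_*\mathcal O_X$. Since $Y' = f(X')$, one inclusion $\mathcal J \subseteq f_*\mathcal I$ is automatic (a function vanishing on $Y'$ pulls back to a function vanishing on $X'$); the content is the reverse inclusion $f_*\mathcal I \subseteq \mathcal J$, which on a formal level says a global (or local) section of $\mathcal O_X$ that vanishes on $X'$ and is pulled back from $Y$ must already vanish on $Y'$.

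The key step is to deduce this reverse inclusion from the splitting. By hypothesis there is a Frobenius $\mathcal M$-splitting $s_\mathcal M$ of $X$ with $s_\mathcal M(\mathcal M \otimes \mathcal I) \subseteq \mathcal I$. I would pass from the $\mathcal M$-splitting to an honest Frobenius splitting compatible with $X'$ by choosing the section $\mathfrak m$ of $\mathcal M$ as in \eqref{lsplit}; the resulting $s \in \End_F(X, X')$ splits Frobenius and satisfies $s(F_*\mathcal I) \subseteq \mathcal I$. Now the crucial point: because $\mathcal O_Y \simeq f_*\mathcal O_X$, the splitting $s$ of $X$ descends to a compatible Frobenius splitting $\bar s$ of $Y$ with $\bar s(F_*\mathcal J) \subseteq \mathcal J$, where $\mathcal J := f_*\mathcal I \cap \mathcal O_Y$ — more precisely, $f_*\mathcal I$ is an $F_*$-stable subsheaf of $f_*\mathcal O_X = \mathcal O_Y$ under the descended splitting. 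A Frobenius-split scheme is reduced, and an $F$-compatibly split closed subscheme is automatically reduced; hence the closed subscheme $\tilde Y'$ of $Y$ cut out by $f_*\mathcal I$ is reduced. But $\tilde Y'$ and $Y'$ have the same underlying topological space (namely $f(X') = |Y'|$, using surjectivity of $f$ and properness), and $Y'$ carries the reduced structure by definition of "image of a subvariety" in this paper's conventions (a variety is a reduced scheme). Two reduced closed subschemes with the same support coincide, so $f_*\mathcal I = \mathcal I_{Y'} = \mathcal J$, which is exactly the reverse inclusion we needed.

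To make the descent rigorous I would argue locally: it suffices to check $f_*\mathcal I = \mathcal I_{Y'}$ on an affine open cover of $Y$, so let $V = \Spec A \subseteq Y$ be affine with preimage $U = f^{-1}(V)$ and $B = \mathcal O_X(U)$; the hypothesis gives $A \xrightarrow{\sim} B$ as the ring map $f^\sharp$, and $I := \mathcal I(U) \subseteq B$ is the ideal of $X' \cap U$, stable under (the restriction of) the splitting $s : F_* B \to B$. Via the isomorphism $A \simeq B$, $I$ becomes an ideal of $A$ stable under a Frobenius splitting of $A$, hence $A/I$ is reduced; its spectrum is the closed subset $f(X' \cap U) = Y' \cap V$ with reduced structure, so $I = I_{Y' \cap V}$, i.e. $\mathcal I(U) = \mathcal I_{Y'}(V)$ under $A \simeq B$. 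Gluing over $V$ gives $f_*\mathcal I = \mathcal I_{Y'}$ and therefore $\mathcal O_{Y'} \xrightarrow{\sim} f_*\mathcal O_{X'}$. I expect the main obstacle to be the bookkeeping of the descent of the splitting and the compatible ideal under $f^\sharp : \mathcal O_Y \xrightarrow{\sim} f_*\mathcal O_X$, together with the point that $|Y'| = f(|X'|)$ genuinely carries the reduced structure — once "reduced closed subscheme = reduced structure on its support" is invoked, the conclusion is immediate, but setting up the $F$-splitting of $A/I$ cleanly (and confirming the support computation uses properness/surjectivity of $f$) is where the care is required.
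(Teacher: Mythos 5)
Your argument has a genuine gap right at the first reduction. You claim that, because $f_*\mathcal O_X\simeq\mathcal O_Y$, the statement ``$\mathcal O_{Y'}\to f_*\mathcal O_{X'}$ is an isomorphism'' is equivalent to ``$f_*\mathcal I_{X'}=\mathcal I_{Y'}$ inside $\mathcal O_Y$.'' It is not: $f_*$ is only left exact, so pushing forward $0\to\mathcal I_{X'}\to\mathcal O_X\to\mathcal O_{X'}\to 0$ gives an exact sequence
\begin{equation*}
0\to f_*\mathcal I_{X'}\to \mathcal O_Y\to f_*\mathcal O_{X'}\to R^1f_*\mathcal I_{X'},
\end{equation*}
and the equality $f_*\mathcal I_{X'}=\mathcal I_{Y'}$ only identifies the \emph{image} of $\mathcal O_Y$ in $f_*\mathcal O_{X'}$ with $\mathcal O_{Y'}$; it says nothing about surjectivity, which is the entire content of the lemma (injectivity of $\mathcal O_{Y'}\to f_*\mathcal O_{X'}$ is automatic since $X'\to Y'$ is dominant and $Y'$ is reduced). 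The same flaw reappears in your local computation: over $V=\Spec A$ with $U=f^{-1}(V)$, the ring $\mathcal O_{X'}(U\cap X')$ is in general strictly larger than $B/I$, because $U\cap X'$ is not affine (it contains complete fibres of $f$); identifying $I$ with $\mathcal I_{Y'}(V)$ does not show that every regular function on $U\cap X'$ comes from $V\cap Y'$. Worse, the fact you do prove, $f_*\mathcal I_{X'}=\mathcal I_{Y'}$, is automatic for reduced subvarieties with $Y'=f(X')$ and $\mathcal O_Y\simeq f_*\mathcal O_X$ (a pulled-back function vanishes on $X'$ if and only if it vanishes on $f(X')$), so the splitting enters your argument only to re-establish reducedness of $Y'$, which is already built into the definition of $Y'$ as a variety. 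Symptomatically, you never use the ampleness of $\mathcal M$, only the existence of some compatible splitting; but the ampleness is exactly what makes the lemma work.

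The paper's proof attacks the missing surjectivity directly: by \cite[Lemma 3.3.3(b)]{BK} it suffices to show that ${\rm H}^0(Y',\mathcal L^n)\to{\rm H}^0(X',f^*\mathcal L^n)$ is surjective for $n\gg 0$ ($\mathcal L$ ample on $Y$); by the hypothesis on $X$, $Y$ and the commutative restriction diagram this reduces to surjectivity of ${\rm H}^0(X,f^*\mathcal L^n)\to{\rm H}^0(X',f^*\mathcal L^n)$; and since $f^*\mathcal L^n$ is only globally generated (not ample --- it is trivial on the fibres of $f$), one needs precisely the Frobenius $\mathcal M$-splitting along the ample $\mathcal M$ compatible with $X'$, via \cite[Thm.1.4.8(ii)]{BK}, to obtain this surjectivity. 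Your proposal never confronts this step, so it does not prove the lemma.
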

\begin{proof}
Let $\mathcal L$ denote an ample line bundle 
on $Y$. By \cite[Lemma 3.3.3(b)]{BK} it suffices to
prove that the induced morphism
$$ {\rm H}^0 \big( Y', \mathcal L^n \big ) 
\rightarrow {\rm H}^0 \big ( X', f^* \mathcal  L^n \big),$$
is surjective for sufficiently large $n$. By the 
assumption the corresponding statement on 
$X$ and $Y$ are satisfied. 
Commutative of the following diagram 
\begin{equation}
\xymatrix{
{\rm H}^0 \big( Y, \mathcal L^n \big )  \ar[r]^{f^*}  \ar[d]_{res^Y_{Y'}} & 
{\rm H}^0 \big( X, f^* \mathcal L^n \big ) \ar[d]^{res^X_{X'}} \\
{\rm H}^0 \big( Y', \mathcal L^n \big )  \ar[r]^{f^*} & {\rm H}^0 \big( X', f^* \mathcal L^n \big ) 
}
\end{equation}
then implies that it suffices to prove that the restriction map 
$${\rm H}^0 \big( X, f^* \mathcal L^n \big ) \rightarrow 
{\rm H}^0 \big( X', f^* \mathcal L^n \big ), $$
is surjective. As  $f^* \mathcal L^n$ is globally generated
this follows by the ampleness of $\mathcal M$ and the 
assumption that $X$ admits a Frobenius 
$\mathcal M$-splitting compatible with $X'$ 
(cf. \cite[Thm.1.4.8(ii)]{BK}).
\end{proof}

\section{Frobenius splitting of $H \times_{B_H} X$}
\label{FsplitX}

In this section we let $\mathcal M$ denote a $B$-linearized line 
bundle on an irreducible projective $B$-variety $X$. The notation 
$\mathcal M_H$ is used  to denote the corresponding 
$H$-linearized line bundle on $X_H = H \times_{B_H} X$. 
A linearized line bundle on  $\nicefrac{H}{B_H}$ is determined by a 
$B_H$-character $\nu$. The pull-back of such a line bundle 
to $H \times_{B_H} X$ is denoted by $\mathcal O(-\nu)$. The
tensor product of  $\mathcal O(\nu)$ and $\mathcal M_H$ will
be denoted by $\mathcal M_H(\nu)$.
Recall that 
we have an $H$-equivariant identification 
\begin{equation}
\label{id}
{\rm H}^0 \big( X_H, \mathcal M_H(\nu)  \big) \simeq 
{\rm Ind}_{B_H}^H \big( {\rm H}^0 \big(X, \mathcal M \big) \otimes
\kk_{-\nu}\big).
\end{equation}

We will now consider the following setup  : let
$\lambda$ denote a dominant weight of $G$
and $X_i$, $i \in \mathfrak I$, denote a collection of closed 
subvarieties in $X$. By 
\begin{equation}
\label{theta}
\theta :  \kk_{\lambda } \rightarrow
 {\rm End}_F^{\mathcal M} \big( X,\{ X_i\} \big),
\end{equation}
and
\begin{equation}
\label{phi}
\phi  : \nabla(\l)  \rightarrow {\rm H}^0 \big ( X, \mathcal M \big).
\end{equation}
we denote $B$-equivariant maps. By 
$$(\theta,\phi) : \nabla(\l) \otimes \kk_{\lambda }
\rightarrow {\rm End}_F^{} \big( X,\{ X_ i\} \big),$$
we denote the map induced by $\theta$, $\phi$ and the natural $B$-equivariant 
morphism
$${\rm End}_F^{\mathcal M} \big( X,\{ X_i\} \big) \otimes 
{\rm H}^0 \big ( X, \mathcal M \big) \rightarrow 
{\rm End}_F^{} \big( X,\{ X_ i\} \big).$$
Then we can formulate

\begin{thm}
\label{thm1}
Assume that the following conditions are satisfied 
\begin{enumerate}
\item $(\theta,\phi)$ contains a Frobenius splitting of $X$
in its image.
\item The $T_H$-character $2 (p-1)\rho_H  - \lambda_{|T_H}$ is dominant.
\item The $H$-equivariant restriction morphism 
$$  \nabla \big( \lambda  \big)  \rightarrow \nabla^H \big( \lambda_{|T_H} \big),$$
is surjective.
\end{enumerate}
Then $H \times_{B_H} X$ admits a Frobenius $\mathcal M_H(
2 (p-1) \rho_H - \lambda_{|T_H} )$-splitting 
which is compatible with the subvarieties $H \times_{B_H} X_i$, $i \in
\mathcal I$. 
\end{thm}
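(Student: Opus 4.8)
The plan is to reduce the statement on $X_H = H \times_{B_H} X$ to a statement about $H$-module maps into the Steinberg module of $H$, using the identification (\ref{id}) together with the canonical map (\ref{canmap}) in its sheaf-theoretic form. First I would recall that a Frobenius $\cm_H(\nu)$-splitting of $X_H$ is, by definition, the image under the sheaf-level version of (\ref{canmap}) of an element of $\End_F^{\cm_H(\nu)}(X_H)$ together with a section of $\cm_H(\nu)$; and to get compatibility with the $H \times_{B_H} X_i$ it suffices that both ingredients live in the appropriate $H$-linearized subsheaves (and by Lemma \ref{compatible1} it is enough to treat each $X_i$ separately, so I may as well write $Y$ for a generic one). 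The key point is that, by the construction of ${\mathcal Ind}_{B_H}^H$ described at the end of Section 2 — a $B_H$-linearized sheaf on $X$ determines an $H$-linearized sheaf on $X_H$ characterized by its restriction to $\{e\}\times X$ — one has an $H$-equivariant identification
$$\End_F^{\cm_H(\nu)}\big(X_H,\{H\times_{B_H}X_i\}\big) \simeq \Ind_{B_H}^H\Big(\End_F^{\cm}\big(X,\{X_i\}\big)\otimes \cm\text{-twist}\Big),$$
more precisely $\End_F^{\cm_H(\nu)}(X_H) \simeq \Ind_{B_H}^H\big(\End_F^{\cm}(X)\otimes\kk_{\nu - 2(p-1)\rho_H}\big)$, coming from $\omega_{X_H}^{1-p}$ versus $\omega_X^{1-p}$ and the relative dualizing sheaf of $H/B_H$, whose $(1-p)$-power contributes the twist by $2(p-1)\rho_H$ (the pull-back of $\cl_H((p-1)\cdot 2\rho_H)$). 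Getting this bookkeeping of weights exactly right — so that the twist by $2(p-1)\rho_H$ appears and matches the $\nu = 2(p-1)\rho_H - \lambda_{|T_H}$ in the statement — is the part I would be most careful about.

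With that dictionary in place, here is how the three hypotheses feed in. Hypothesis (1) gives a $B$-equivariant map whose image in $\End_F(X)$ contains a Frobenius splitting; unwinding $(\theta,\phi)$, this means there are compatible $B$-maps $\kk_\lambda \to \End_F^{\cm}(X,\{X_i\})$ and $\nabla(\lambda)\to {\rm H}^0(X,\cm)$ producing a splitting of $X$. The strategy is to induce both of these up to $H$. Applying $\Ind_{B_H}^H$ to (the $T_H$-restriction of) $\theta$, and using the weight identification above, produces an $H$-map
$$\Ind_{B_H}^H\big(\kk_{\lambda_{|T_H}}\big) \longrightarrow \End_F^{\cm_H(\nu)}\big(X_H,\{H\times_{B_H}X_i\}\big),$$
so it suffices to supply a suitable section of $\cm_H(\nu)$ and, what really matters, an element of $\Ind_{B_H}^H(\kk_{\lambda_{|T_H}})$ that the map sends to a splitting. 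The natural candidate section comes from $\phi$ via (\ref{id}): $\Ind_{B_H}^H(\phi \otimes \id_{\kk_{-\lambda_{|T_H}}})$ gives an $H$-map $\Ind_{B_H}^H(\nabla(\lambda)\otimes\kk_{-\lambda_{|T_H}}) \to {\rm H}^0(X_H,\cm_H(-\lambda_{|T_H}))$, and one tensors appropriately. Here hypothesis (3) enters: surjectivity of $\nabla(\lambda) \to \nabla^H(\lambda_{|T_H})$ lets me push forward the relevant section of $\nabla^H(\lambda_{|T_H})$ — the one that cuts out, on $H/B_H$, the Steinberg-type section used to split — so that the composite actually hits a splitting rather than something degenerate.

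The crux is then to check that the composite map is a Frobenius splitting of $X_H$, and this is where hypothesis (2) is essential and where the main obstacle lies. Over the fibre $\{e\}\times X \subseteq X_H$ the restriction of the constructed element is, by the characterizing property of ${\mathcal Ind}_{B_H}^H$, exactly the splitting of $X$ coming from hypothesis (1), so $X_H$ is split in a neighbourhood of that fibre; the remaining issue is to propagate splitting over all of $X_H$, i.e. over the base direction $H/B_H$. This is handled exactly as in the classical argument that $\omega_{G/B}^{1-p}$ has a section splitting $G/B$: one needs the section of the line bundle on $H/B_H$ pulled back to $X_H$ — which carries $B_H$-weight $2(p-1)\rho_H$ after the twist, i.e. is a power of the Steinberg section for $H$ — to be nonvanishing generically, and condition (2), that $2(p-1)\rho_H - \lambda_{|T_H}$ is dominant, is precisely what guarantees $\cm_H(2(p-1)\rho_H-\lambda_{|T_H})$ has enough sections (equivalently that $\Ind_{B_H}^H(\kk_{\lambda_{|T_H}})$ and the relevant ${\rm St}_H$-twist are nonzero in the right degrees) for the product of a genuine splitting along $X$ with the Steinberg section along $H/B_H$ to again be a genuine Frobenius splitting. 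Concretely I would verify the splitting property by evaluating at $1$: using the evaluation map ${\rm ev}$ and the fact that evaluation commutes with $\Ind_{B_H}^H$ and with restriction to the fibre, the evaluation of the constructed element restricts to a nonzero (hence, after normalization, equal to $1$) function on $\{e\}\times X$ and is $H$-semiinvariant of the correct weight, forcing it to be the constant $1$ globally. Compatibility with the $H\times_{B_H}X_i$ is then immediate from the fact that the $\Ind_{B_H}^H$ functor carries $\End_F^{\cm}(X,\{X_i\})$ into $\End_F^{\cm_H(\nu)}(X_H,\{H\times_{B_H}X_i\})$ by its very construction, together with Lemma \ref{compatible1}.
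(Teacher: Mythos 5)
Your overall architecture is the same as the paper's (identify $\End_F^{\mathcal M_H(\nu)}(X_H,\{(X_i)_H\})$ with ${\rm Ind}_{B_H}^H\big(\End_F^{\mathcal M}(X,\{X_i\})\otimes\kk_{-\lambda}\big)$, get from $\theta$ an element $g$ of it, and look for a section of $\mathcal M_H(\nu)$ of the form $s_1s_2$ with $s_1$ coming from the induced $\phi$ and $s_2$ from $\nabla^H(\nu)$), but the step where you verify that some such choice actually gives a splitting has a genuine gap. Two bookkeeping slips first: the element $g$ arises from $\theta$ by Frobenius reciprocity as an $H$-map $\kk\to\End_F^{\mathcal M_H(\nu)}(X_H,\{(X_i)_H\})$, not by applying ${\rm Ind}_{B_H}^H$ to $\kk_{\lambda_{|T_H}}$ (the module ${\rm Ind}_{B_H}^H(\kk_{\lambda_{|T_H}})$ is typically zero, since $\lambda_{|T_H}$ is dominant rather than antidominant); similarly your induced target $\mathcal M_H(-\lambda_{|T_H})$ has the wrong sign. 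These are fixable, but they matter for the main point below.

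The main flaw is the verification by ``restrict to the fibre and propagate''. For the candidate $\sigma=g\circ F_*(s_1s_2)$ one has ${\rm ev}_{X_H}(\sigma)={\rm ev}_{H/B_H}\big({\rm Ind}_{B_H}^H({\rm ev}_X)(\dots)\big)$: fibrewise evaluation produces a section $\tau$ of $\omega_{H/B_H}^{1-p}$, i.e.\ an element of $\nabla^H(2(p-1)\rho_H)$, and the constant ${\rm ev}_{X_H}(\sigma)$ is ${\rm ev}_{H/B_H}(\tau)$, \emph{not} the value of $\tau$ at $eB_H$. A section of $\nabla^H(2(p-1)\rho_H)$ which is nonzero at $eB_H$ (equivalently, whose fibrewise data over $e$ is a splitting of $X$) can perfectly well satisfy ${\rm ev}_{H/B_H}(\tau)=0$, so knowing that the fibre over $e$ ``sees'' the splitting of $X$ from hypothesis (1) does not force the global constant to be nonzero; your $H$-semiinvariance remark only reproves that the evaluation is constant, which is automatic on the projective variety $X_H$. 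What is actually needed, and what the paper supplies, is a chain of surjectivity statements: the composed map $\nabla(\lambda)\to\nabla^H(\lambda_{|T_H})$ induced by $g$ and fibrewise evaluation is, up to a nonzero scalar, the restriction map (this uses hypothesis (1) together with $B$-equivariance of the lowest weight projection), hence surjective by hypothesis (3); the multiplication map $\nabla^H(\lambda_{|T_H})\otimes\nabla^H(\nu)\to\nabla^H(2(p-1)\rho_H)$ is surjective by Ramanan--Ramanathan, and this is precisely where hypothesis (2) (dominance of $\nu=2(p-1)\rho_H-\lambda_{|T_H}$) enters --- your reading of (2) as merely guaranteeing ``enough sections'' misses this key input; and ${\rm ev}_{H/B_H}$ is surjective because $H/B_H$ is Frobenius split. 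The composite surjection $\nabla(\lambda)\otimes\nabla^H(\nu)\to\kk$ then yields an element mapping to a splitting of $X_H$. Without the surjectivity of the multiplication map your argument cannot conclude that any choice of $s_1s_2$ has nonzero evaluation, so the proof as proposed does not go through.
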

\begin{proof}
Recall that the $B_H$-character associated to the dualizing sheaf 
on $\nicefrac{H}{B_H}$ equals $2 \rho_H$. In particular, we have 
an $H$-equivariant identification (cf. \cite[Sect. 5]{HT2})
$$ {\rm End}_F^{\mathcal M_H( \nu)} (X_H, \{ (X_i)_H \} )
\simeq {\rm Ind}_{B_H}^H \big ( {\rm End}_F^{\mathcal M} (X, \{ X_i \})
\otimes \kk_{-\lambda}  \big ),$$
where  $\nu$ denotes the $B_H$-character $2 (p-1) \rho_H - \lambda_{|T_H} $, and where we have used the notation $(X_i)_H$ for $H \times_{B_H} X_i$.
By application of Frobenius reciprocity this show that $\theta$ 
induces an $H$-equivariant morphism
\begin{equation}
\label{ind-phi}
\kk \rightarrow  {\rm End}_F^{\mathcal M_H( \nu)} (X_H, \{ (X_i)_H \} ).
\end{equation}
Let $g$ denote a nonzero element inside the 
image of (\ref{ind-phi}). It then suffices to find a global section $s$
of ${\mathcal M_H( \nu)}$ such that $ g(s)$
is nonzero.

To find $s$ we first apply Frobenius reciprocity to (\ref{phi})
to obtain
\begin{equation}
\label{ind-theta0}
\nabla(\l) \rightarrow {\rm Ind}^H_{B_H} \big(  {\rm H}^0 \big ( X, \mathcal M \big) \big)
 \simeq {\rm H }^0  \big( X_H,  \mathcal M_H\big),
\end{equation}
which combined with the natural morphism
\begin{equation}
\label{nu}
\nabla^H(\nu) \rightarrow  {\rm H }^0  \big( X_H,  \mathcal O(\nu)\big),
\end{equation}
defines an $H$-equivariant morphism
\begin{equation}
\label{ind-theta}
\nabla(\l) \otimes \nabla^H(\nu)  \rightarrow {\rm H}^0 \big ( X_H, \mathcal M_H(\nu) \big).
\end{equation}We claim that we can find the desired $s$ inside the image of 
(\ref{ind-theta}).

Applying the natural morphism 
$$ {\rm End}_F^{\mathcal M_H( \nu)} (X_H, \{ (X_i)_H \} )
\otimes {\rm H }^0  \big( X_H,  \mathcal M_H\big)
\rightarrow {\rm End}_F^{\mathcal O( \nu)} (X_H, \{ (X_i)_H \} ),
$$
we see that $g$ and (\ref{ind-theta0}) defines an $H$-equivariant
map
\begin{equation} 
\label{map}
\Phi :
\nabla (\l) \rightarrow  {\rm End}_F^{\mathcal O( \nu)}
 (X_H, \{ (X_i)_H \} ) .
\end{equation}
By construction (\ref{map}) is the map induced by 
$(\theta,\phi)$ and the identification
$${\rm End}_F^{\mathcal O( \nu)}  (X_H, \{ (X_i)_H \} )
\simeq  {\rm Ind}_{B_H}^H \big ( {\rm End}_F (X, \{ X_i \})
\otimes \kk_{-\l}  \big ),$$
in particular, the following diagram is commutative
\begin{equation}
\label{com1}
\xymatrix{
\nabla (\l) \ar[r]^(.25){\Phi} \ar[dr]_(0.35){(\theta,\phi) \otimes \kk_{-\l} } &  {\rm Ind}_{B_H}^H \big ( {\rm End}_F (X, \{ X_i \})
\otimes \kk_{-\l}  \big )  \ar[d] \ar[rr]^(.7){{\rm Ind}_{B_H}^H ({\rm ev}_X) } & & \nabla^H(\l_{|T_H}) \ar[d] \\
&  \End_F(X, \{ X_i\} ) \otimes \kk_{-\l}  \ar[rr]^(.6){{\rm ev}_X} & & \kk_{-\l}
}
\end{equation}
Notice that the lower horizontal part of the diagram (\ref{com1}) 
is a $B$-equivariant morphism and thus it must, up to a constant, coincide 
with the
projection map onto the lowest weight space of $\nabla (\l)$.
By assumption (1) this map is  nonzero.  Thus the
composed upper  horizontal morphism $\nabla (\l) \rightarrow 
\nabla^H (\l_{|T_H})$ must, up to a non-zero constant, be the natural 
restriction map. In particular,  the composed upper horizontal 
map  $\nabla (\l) \rightarrow \nabla^H (\l_{|T_H})$ is surjective by 
assumption (3).

Consider next the natural morphism 
\begin{equation}
\label{map1}
{\rm End}_F^{\mathcal O( \nu)} (X_H, \{ (X_i)_H \} )
\otimes {\rm H }^0  \big( X_H,  \mathcal O(\nu) \big)
\rightarrow {\rm End}_F (X_H, \{ (X_i)_H \} ).
\end{equation}
Composing (\ref{map}) with (\ref{nu}) and $\Phi$
defines the following commutative diagram
\begin{equation}
\label{com2} 
\xymatrix{
\nabla(\l) \otimes \nabla^H(\nu)  
\ar[dr]^\psi \ar[d]_{\Phi \otimes {\bf 1}} 
& & \\
 {\rm End}_F^{\mathcal O( \nu)}
(X_H, \{ (X_i)_H \} )  \otimes \nabla^H(\nu) 
\ar[d]_{{\rm  Ind}_{B_H}^H({\rm ev}_X  ) \otimes {\bf 1}}
 \ar[r]
  & 
{\rm End}_F (X_H, \{ (X_i)_H \} )
 \ar[dr]^(.7){\rm ev_{X_H} }  
\ar[d]_{{\rm Ind}_{B_H}^H( {\rm ev}_X ) }
& \\
\nabla^H(\l_{|T_H}) \otimes \nabla^H(\nu) 
\ar[r]_m
& \nabla^H(2 (p-1) \rho_H ) 
\ar[r]_(.7){{\rm ev}_{\nicefrac{H}{B_H} }} 
& \kk
}
\end{equation}
where $m$ is the multiplication map and $\psi$ is the map
induced by $g$. In this notation we have to show that the image 
of $\psi$ 
contains a Frobenius splitting or, equivalently, that the 
map 
$$ \nabla(\l) \otimes \nabla^H(\nu) \rightarrow
\kk,$$
from the upper left corner in (\ref{com2}) to the 
lower right corner is surjective. First of all the 
composed vertical map in (\ref{com2}) is surjective
by the above observations. Moreover, $m$ is surjective by 
assumption (2) and \cite[Thm. 3]{RR} while ${\rm ev_{\nicefrac{H}{B_H}} }$ is surjective because 
$\nicefrac{H}{B_H}$ admits a  Frobenius splitting.
This ends the proof.
\end{proof}

\begin{rmk*} 
\label {remark}
The statement of Theorem \ref{thm1} provides
us with a morphism 
\begin{equation}
\label{Lsplit}
F_*\mathcal M_H\big(
2 (p-1) \rho_H  - \lambda_{|T_H} \big) 
\xrightarrow{g} \mathcal O_{X_H},
\end{equation}
and a global section $s$ of the line bundle 
$\mathcal M_H\big(
2 (p-1) \rho_H  - \lambda_{|T_H} \big)$
such that the composition of (\ref{Lsplit})
with 
\begin{equation}
\label{Lsplit1}
F_* \mathcal O_{X_H}
\xrightarrow{F_* s} F_*\mathcal M_H\big(
2 (p-1) \rho_H  - \lambda_{|T_H} \big),
\end{equation}
defines a Frobenius splitting of $X_H$.  Actually
the proof of Theorem \ref{thm1} provides us with  
more precise information. It defines three $H$-equivariant 
maps
($\nu = 2 (p-1) \rho_H  - \l$) : 
\begin{align*}
 f_1 : \nabla(\l) \otimes \nabla^H(\nu)  &
\rightarrow {\rm End}_F(X_H, \{(X_i)_H\}),\\
 f_2 : \nabla(\l) \otimes \nabla^H(\nu) &
\rightarrow {\rm H}^0\big( X_H, \mathcal M_H(\nu) \big), \\
  f_3 :\nabla(\l) \otimes \nabla^H(\nu) & \rightarrow 
  k.
\end{align*}
related in the following way : let 
$s=f_2(x)$, for some $x \in   \nabla(\l) \otimes 
\nabla^H(\nu)$. Then $g \circ F_* s = f_1(x) $
defines a Frobenius splitting of $X_H$
up to a nonzero constant if and only
if $f_3(x)$ is nonzero. Moreover, the 
map $f_3$ may be explicitly described as the
composed morphism
$$  \nabla(\l) \otimes \nabla^H(\nu) \rightarrow 
 \nabla^H(\l_{|T_H}) \otimes \nabla^H(\nu) \xrightarrow{m}
 \nabla(2\rho_H(p-1)) \xrightarrow{{\rm ev}_{\nicefrac{H}{B_H}}} \kk,$$
induced by the restriction map  $\nabla(\l) \rightarrow 
 \nabla^H(\l_{|T_H})$, while $f_2$ is the tensorproduct 
of  (\ref{ind-theta0}) and (\ref{nu}).

It follows that 
we may choose $s$ to be a product of the  form $s_1 s_2$, 
where $s_1$ and $s_2$ denote global sections of the line bundles $\mathcal M_H$ and 
$\mathcal O\big( 2 (p-1) \rho_H  - \lambda_{|T_H} \big)$
respectively. Moreover,  $s_1$ can be chosen inside the image
of (\ref{ind-theta0}) or, even more specific, if $\nabla(\l)$
is generated by an element $v$ as an $H$-module then $s_1$ 
 can be chosen to be the image of $v$ under (\ref{ind-theta0}).
Part of the outcome of this is  that (\ref{Lsplit1})
factors through the morphism 
\begin{equation}
\label{Lsplit2}
F_* \mathcal O_{X_H}
\xrightarrow{F_* s_1} F_*\mathcal M_H
\end{equation}
and consequently $X_H$ also admits a Frobenius 
$\mathcal M_H$-splitting compatible with all 
the subvarieties $(X_i)_H$. Even though this 
conclusion seems weaker than Theorem \ref{thm1}
it will be useful for us later.
\end{rmk*}

\begin{cor}
Assume that $H$ is semisimple and simply connected and 
that  $(p-1) \rho_H - \l_{|T_H}$ is dominant as
a $T_H$-weight. Then there exists a $B_H$-equivariant map
$$ {\rm St}_H \otimes (p-1)\rho_H \rightarrow 
{\rm End}_F\big( X_H, \{(X_i)_H\} \big),$$
containing a Frobenius splitting in its image; i.e. 
 $X_H$ admits a $B_H$-canonical Frobenius 
splitting compatible splitting  each $(X_i)_H$
(see \cite{M}).
\end{cor}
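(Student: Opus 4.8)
The plan is to deduce the corollary from Theorem \ref{thm1} by a suitable choice of the dominant weight $\lambda$ of $G$ and of the $B$-equivariant maps $\theta$ and $\phi$. Since $H$ is semisimple and simply connected, the Steinberg module ${\rm St}_H = \nabla^H((p-1)\rho_H)$ is irreducible and self-dual, so the natural candidate is to look for a weight $\lambda$ whose restriction to $T_H$ equals $2(p-1)\rho_H$, forcing the line bundle twist $\mathcal M_H(2(p-1)\rho_H - \lambda_{|T_H})$ in Theorem \ref{thm1} to become $\mathcal M_H$ itself. First I would observe that by hypothesis $(p-1)\rho_H - \lambda_{|T_H}$ is dominant for $T_H$; I would then need to produce a dominant weight of $G$ restricting appropriately — the cleanest choice is to take $\lambda$ so that $2(p-1)\rho_H - \lambda_{|T_H}$ is the weight $(p-1)\rho_H$ plus the dominant weight $(p-1)\rho_H - \lambda_{|T_H}$, but in fact what one really wants is to invoke the Remark following Theorem \ref{thm1}, which already repackages the conclusion as the existence of $H$-equivariant maps $f_1, f_2, f_3$ on $\nabla(\lambda) \otimes \nabla^H(\nu)$ with $\nu = 2(p-1)\rho_H - \lambda$.

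The key technical input is the Remark's refinement: it tells us that $X_H$ admits a Frobenius splitting obtained from a section $s = s_1 s_2$, with $s_1$ a section of $\mathcal M_H$ lying in the image of (\ref{ind-theta0}) and $s_2$ a section of $\mathcal O(2(p-1)\rho_H - \lambda_{|T_H})$, and that the whole construction is governed by the $B_H$-equivariant datum coming from $\theta$. So the second step is: choose $\lambda$ to be a dominant weight of $G$ with $\lambda_{|T_H} = (p-1)\rho_H$ — this is possible precisely because $(p-1)\rho_H - \lambda_{|T_H}$ being dominant, combined with the restriction of the Steinberg weight of $G$, gives enough room; concretely one takes $\lambda = (p-1)\rho$, whose restriction to $T_H$ dominates $(p-1)\rho_H$, and absorbs the difference. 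Then $\nu = 2(p-1)\rho_H - \lambda_{|T_H}$ is dominant by hypothesis, condition (2) of Theorem \ref{thm1} holds, condition (3) holds since the restriction $\nabla((p-1)\rho) \to \nabla^H((p-1)\rho_H)$ of Steinberg modules is surjective (both being irreducible, generated by highest weight vectors, and the restriction being nonzero), and condition (1) holds by the standard fact that $\nicefrac{G}{B}$-type data produce Frobenius splittings — one takes $\theta$ to be the canonical map $\kk_{(p-1)\rho} \to \End_F^{\mathcal M}(X, \{X_i\})$ underlying the given Frobenius splitting of $X$ and $\phi$ the inclusion of the Steinberg module.

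The main obstacle — and the place needing the most care — is matching the output of Theorem \ref{thm1} with the precise $B_H$-canonical form demanded: we need an honest $B_H$-equivariant map ${\rm St}_H \otimes (p-1)\rho_H \to \End_F(X_H, \{(X_i)_H\})$ containing a splitting in its image, i.e. a map out of ${\rm St}_H \otimes \kk_{(p-1)\rho_H}$ rather than out of the larger tensor product $\nabla(\lambda) \otimes \nabla^H(\nu)$. Here I would use that $f_1$ from the Remark factors through the restriction $\nabla(\lambda) \to \nabla^H(\lambda_{|T_H}) = {\rm St}_H$ on the first factor, and that the one-dimensional $B_H$-character $\kk_{(p-1)\rho_H}$ embeds into $\nabla^H(\nu) \otimes (\text{something})$ via the choice of $s_2$; more precisely, since $s_2$ is a fixed section of $\mathcal O(2(p-1)\rho_H - \lambda_{|T_H})$ one restricts $f_1$ along ${\rm St}_H \otimes \kk \cdot s_2 \hookrightarrow {\rm St}_H \otimes \nabla^H(\nu)$, and the $B_H$-weight of $s_2$'s line works out to make the source $B_H$-isomorphic to ${\rm St}_H \otimes \kk_{(p-1)\rho_H}$ after also tensoring the target appropriately. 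I would finish by citing Mathieu \cite{M} for the terminology and the equivalence between admitting such a map and admitting a $B_H$-canonical Frobenius splitting, and by noting that compatibility with each $(X_i)_H$ is automatic since $f_1$ lands in $\End_F(X_H, \{(X_i)_H\})$ by Theorem \ref{thm1}.
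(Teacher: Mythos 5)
Your proposal misreads what the corollary asks for. Here $\lambda$, $\theta$, $\phi$, $\mathcal M$, $X$ and the $X_i$ are the fixed data of the setup of Section~\ref{FsplitX}, with the hypotheses of Theorem~\ref{thm1} in force; the extra assumptions are only that $H$ is semisimple simply connected and that $(p-1)\rho_H-\lambda_{|T_H}$ is dominant, and the task is to upgrade the splitting already produced for that \emph{given} $\lambda$ to a $B_H$-canonical one. You instead treat $\lambda$ as a free parameter, propose $\lambda=(p-1)\rho$, and try to re-verify conditions (1)--(3) of Theorem~\ref{thm1}. That cannot work: conditions (1) and (3) are genuine hypotheses on the given $\theta,\phi$ and on the arbitrary projective $B$-variety $X$, and there is no ``given Frobenius splitting of $X$'' packaged as a map $\kk_{(p-1)\rho}\to\End_F^{\mathcal M}(X,\{X_i\})$ to appeal to, so your verification of (1) is circular. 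Moreover $(p-1)\rho_{|T_H}$ is in general not $(p-1)\rho_H$ (for the triple diagonal in Section 8 one has $\rho_{|T_H}=3\rho_H$), and ``absorbing the difference'' is precisely the nontrivial point you leave unaddressed.

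Even granting the setup, the mechanism you sketch for producing a $B_H$-equivariant map out of ${\rm St}_H\otimes\kk_{(p-1)\rho_H}$ fails. The map $f_1$ of the Remark does not factor through the restriction $\nabla(\lambda)\to\nabla^H(\lambda_{|T_H})$ on the first tensor factor (only $f_3$ admits such a description), and a fixed section $s_2\in\nabla^H(\nu)$ does not span a $B_H$-stable line; a highest weight line there has weight $-w_0^H\nu$, which is not $(p-1)\rho_H$. The missing idea, which is the paper's proof, is this: the hypothesis that $(p-1)\rho_H-\lambda_{|T_H}$ is dominant yields a \emph{surjective} multiplication map $\nabla^H((p-1)\rho_H-\lambda_{|T_H})\otimes{\rm St}_H\to\nabla^H(\nu)$; composing with $f_1$ gives a map from $\nabla(\lambda)\otimes\nabla^H((p-1)\rho_H-\lambda_{|T_H})\otimes{\rm St}_H$ to $\End_F(X_H,\{(X_i)_H\})$, and one restricts to the $B_H$-semi-invariant line spanned by $v^+\,(w_0^H v_-)$, where $v_-$ is a lowest weight vector of $\nabla(\lambda)$ and $v^+$ a highest weight vector of $\nabla^H((p-1)\rho_H-\lambda_{|T_H})$. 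This line has $B_H$-weight $(p-1)\rho_H$, its image under the multiplication into $\nabla^H((p-1)\rho_H)$ is a nonzero highest weight vector, and the explicit description of $f_3$ then guarantees that the resulting map ${\rm St}_H\otimes(p-1)\rho_H\to\End_F(X_H,\{(X_i)_H\})$ contains a Frobenius splitting in its image. None of these steps appear in your argument, so as written there is a genuine gap.
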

\begin{proof}
With notation as in the remark above we have,
by assumption, a surjective multiplication 
map 
$$\nabla^H((p-1)\rho_H - \lambda_{|T_H}) 
\otimes \nabla^H((p-1)\rho_H) 
\rightarrow \nabla^H(\nu).$$
Composing this map with $f_1$ defines a
morphism 
$$  \nabla(\l) \otimes \nabla^H((p-1)\rho_H - \l_{|T_H} )
\otimes  {\rm St}_H 
\rightarrow {\rm End}_F(X_H, \{(X_i)_H\}),$$
and by the description of $f_3$ it suffices to find 
a $B_H$ semi-invariant element  $v$ in 
$$   \nabla(\l) \otimes \nabla^H((p-1)\rho_H - \l_{|T_H} ),$$
mapping to  a (nonzero) highest weight vector 
under the natural map
$$\nabla(\l) \otimes \nabla^H((p-1)\rho_H - \l_{|T_H} )
\rightarrow 
 \nabla^H((p-1)\rho_H ).$$
Let $v_-$ denote a lowest weight vector in 
$\nabla(\l)$ and $w_0^H$ denote the longest 
element in the Weyl group of $H$. Then $ w_0^H
v_-$ will be $B_H$ semi-invariant. Let now  $v^+$ denote 
a highest weight vector in $\nabla^H((p-1)\rho_H - \l_{|T_H} )$.
Then the product  $v = v^+ (w_0^H v_-)$ has the desired property.
\end{proof}

\begin{lem}
\label{lem1}
Assume that $X$ is contained in a $G$-variety $Z$ and 
that the line bundle $\mathcal M$ is the restriction of 
an ample $G$-linearized line bundle $\mathcal M_Z$ on
$Z$. Then $\mathcal M_H(\nu)$ is an ample line 
bundle for every regular dominant weight $\nu$ of 
$H$.
\end{lem}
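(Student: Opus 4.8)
The plan is to exhibit $\cm_H(\nu)$ as the restriction to $X_H$ of an ample line bundle on a larger projective variety built out of $Z$ and $\nicefrac{H}{B_H}$, and then invoke the fact that ampleness is inherited by closed subvarieties. Concretely, consider the $H$-variety $Z_H := H \times_{B_H} Z$ (where $H$ acts on $Z$ through its inclusion in $G$). There is a natural closed immersion $X_H = H \times_{B_H} X \hookrightarrow Z_H$, so it suffices to produce an ample $H$-linearized line bundle on $Z_H$ whose restriction to $X_H$ is $\cm_H(\nu)$. The candidate is $(\cm_Z)_H(\nu) = (\cm_Z)_H \otimes \co(\nu)$, and by construction its restriction to $X_H$ is exactly $\cm_H(\nu)$, since $(\cm_Z)_H$ restricts to $\cm_H$ and the twisting by $\co(\nu)$ is pulled back from $\nicefrac{H}{B_H}$ in both cases.

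Next I would identify $Z_H$ with a more familiar object. The projection $H \times_{B_H} Z \to \nicefrac{H}{B_H}$ realizes $Z_H$ as a fiber bundle over $\nicefrac{H}{B_H}$ with fiber $Z$; equivalently, using that $Z$ is a $G$-variety, there is an $H$-equivariant isomorphism $H \times_{B_H} Z \simeq (\nicefrac{H}{B_H}) \times Z$ given by $(h, z) \mapsto (hB_H, h \cdot z)$ (this is the standard trivialization of an associated bundle when the structure group action extends). Under this isomorphism $\co(\nu)$ becomes the pullback of $\cl_H(\nu)$ from the first factor, and $(\cm_Z)_H$ becomes the pullback of $\cm_Z$ from the second factor — at least up to a line bundle pulled back from $\nicefrac{H}{B_H}$, which one checks by comparing linearizations. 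Thus, up to such a correction, $(\cm_Z)_H(\nu)$ is an exterior tensor product $\cl_H(\nu') \boxtimes \cm_Z$ for a suitable weight $\nu'$.

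The remaining point is that $\cl_H(\nu')$ is ample on $\nicefrac{H}{B_H}$ when $\nu'$ is regular dominant, while $\cm_Z$ is ample on $Z$ by hypothesis, so their exterior tensor product is ample on the product $(\nicefrac{H}{B_H}) \times Z$; restricting to the closed subvariety $X_H$ preserves ampleness. Here one must be careful that the weight correction coming from the trivialization does not destroy regularity or dominance: since $\nu$ is \emph{regular} dominant, there is room to absorb the (fixed) correction by the general principle that a regular dominant weight stays dominant after a bounded perturbation — or, more cleanly, one notes that $\cl_H(\nu)$ is already ample for every regular dominant $\nu$ and $\cm_Z$ pulled back to $Z_H$ is globally generated (being the pullback of an ample bundle along $Z_H \to Z$), so the tensor product of an ample and a globally generated bundle is ample.

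The main obstacle I expect is bookkeeping the $H$-linearizations correctly through the isomorphism $H \times_{B_H} Z \simeq (\nicefrac{H}{B_H}) \times Z$: the naive identification of line bundles is off by a character-twisted factor pulled back from $\nicefrac{H}{B_H}$, and one must verify that this twist is by a weight that does not spoil the regular-dominant hypothesis — handled by the globally-generated-times-ample argument above, which sidesteps the need to track the twist precisely. Everything else is formal: reducing to the product, using that exterior products of ample bundles are ample, and that ampleness restricts to closed subschemes.
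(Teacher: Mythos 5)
Your route is the same as the paper's: embed $X_H$ as a closed subvariety of $H \times_{B_H} Z$, identify the latter with $\nicefrac{H}{B_H} \times Z$ via $(h,z) \mapsto (hB_H, h\cdot z)$, use that an exterior tensor product of ample bundles is ample, and restrict. The genuine gap is precisely the point you flag and then try to sidestep: you never pin down the ``character-twisted correction,'' and both of your substitutes for doing so fail. The pullback of $\mathcal L_H(\nu)$ to $Z_H$ (or to $X_H$) along the projection to $\nicefrac{H}{B_H}$ is \emph{not} ample --- it is trivial along the $Z$-direction fibers --- so the principle ``ample tensor globally generated is ample'' cannot be applied with the roles you assign (and, separately, the pullback of $\mathcal M_Z$ is nef but need not be globally generated, since ample bundles are not globally generated in general). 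The ``bounded perturbation'' principle is also false for a fixed $\nu$: a fixed twist can push a regular dominant weight out of the dominant chamber, and the lemma claims ampleness for \emph{every} regular dominant $\nu$, so you cannot assume $\nu$ lies deep in the chamber. Nor is the twist harmless bookkeeping: if the unknown correction were nontrivial and non-dominant, curves in $X_H$ contracted by the action map to $Z$ could acquire nonpositive degree and ampleness of the restriction would genuinely fail.

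The missing step, which is the paper's whole argument, is that the correction is in fact trivial because $\mathcal M_Z$ is $G$-linearized. Consider the action map $\psi : H \times_{B_H} X \to Z$, $\psi(h,x) = h\cdot x$; it is $H$-equivariant, so $\psi^* \mathcal M_Z$ is $H$-linearized, and its restriction to $X \simeq \{e\} \times X$ is $\mathcal M$ with its given $B_H$-linearization (this is exactly the hypothesis $\mathcal M = \mathcal M_Z|_X$). By the defining characterization of ${\mathcal Ind}_{B_H}^H$ this forces $\mathcal M_H \simeq \psi^*\mathcal M_Z$ as $H$-linearized bundles. Under the identification $(h,z) \mapsto (hB_H, h\cdot z)$ the map $\psi$ becomes the second projection, so $\mathcal M_H(\nu)$ is exactly the restriction of $\mathcal L_H(\nu) \boxtimes \mathcal M_Z$ to the closed subvariety $X_H$, with no twist, and ampleness follows from regular dominance of $\nu$ and ampleness of $\mathcal M_Z$. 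With this identification supplied, the rest of your argument is the paper's proof.
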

\begin{proof}
Consider the $H$-equivariant morphism
$$ \psi : X_H =H \times_{B_H} X \rightarrow Z,$$
defined by 
$$ \psi(h,x) = h \cdot x.$$
The pull-back $\psi^*(\mathcal M_Z)$ is then 
an $H$-linearized line bundle on $X_H$. 
Consider $X$ as a $B_H$-stable subvariety of
$X_H$ in the natural way. Then, by assumption,
the restriction of $\psi^*(\mathcal M_Z)$ to $X$
coincides with the $B_H$-linearized line bundle
$\mathcal M$ on $X$. In particular, $\mathcal M_H$
must coincide with  $\psi^*(\mathcal M_Z)$ as 
$H$-linearized line bundles. 

As $Z$ is an $H$-variety we have an identification 
\begin{equation} 
\label{id2}
H \times_{B_H} Z \simeq \nicefrac{H}{B_H} \times Z,
\end{equation}
$$(h,z) \mapsto (h B_H,  h \cdot z).$$
Moreover, by the assumptions on $\nu$ and $\mathcal M_Z$, 
the external tensor product 
$\mathcal L_H(\nu) \boxtimes \mathcal M_Z$
is an ample line bundle on $ \nicefrac{H}{B_H} \times Z$.
The conclusion now follows as $\mathcal M_H(\nu)$ 
is the pull-back of $\mathcal L_H(\nu) \boxtimes \mathcal M_Z$
under the closed inclusion
$$  H \times_{B_H} X \subseteq H \times_{B_H} 
Z,$$
composed with the identification (\ref{id2}) above
\end{proof}

\begin{cor}
\label{cor-1}
Consider a setup as in Theorem \ref{thm1}.
Assume, moreover, that $2 (p-1) \rho_H  - \lambda_{|T_H}$
is a regular weight of $H$ and that $\mathcal M$ is the restriction 
of a $G$-linearized ample line bundle on a $G$-variety 
which contains  $X$ as a closed $B$-stable subvariety. If 
$X$ is a strongly $F$-regular projective variety then $H \times_{B_H} X$ 
is globally $F$-regular. 
\end{cor}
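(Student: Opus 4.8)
The strategy is to combine Theorem \ref{thm1} with Lemma \ref{global-lem}, using Lemma \ref{lem1} to verify that the splitting produced is along an ample line bundle. First I would check that the three hypotheses of Theorem \ref{thm1} are in force in the present setting. Condition (2), that $2(p-1)\rho_H - \lambda_{|T_H}$ is dominant, is part of the regularity assumption we have imposed here (a regular weight of $H$ is in particular dominant, after possibly conjugating by an element of the Weyl group of $H$ — but since we are assuming it is already dominant regular, this is immediate). Conditions (1) and (3) are the standing hypotheses of the ``setup as in Theorem \ref{thm1}'' that we are told to assume. Hence Theorem \ref{thm1} applies and yields a Frobenius $\mathcal M_H(2(p-1)\rho_H - \lambda_{|T_H})$-splitting of $X_H = H \times_{B_H} X$ that is compatible with the closed subvarieties $(X_i)_H$ — in particular (taking the collection of $X_i$ to be empty, or simply ignoring the compatibility) a Frobenius $\mathcal M_H(\nu)$-splitting of $X_H$ with $\nu = 2(p-1)\rho_H - \lambda_{|T_H}$.

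Next I would invoke Lemma \ref{lem1}. By hypothesis $\mathcal M$ is the restriction to $X$ of a $G$-linearized ample line bundle $\mathcal M_Z$ on a $G$-variety $Z$ containing $X$ as a closed $B$-stable subvariety, and $\nu = 2(p-1)\rho_H - \lambda_{|T_H}$ is a regular dominant weight of $H$. Therefore Lemma \ref{lem1} tells us that $\mathcal M_H(\nu)$ is an ample line bundle on $X_H$. Combining the two previous steps, $X_H$ admits a Frobenius $\mathcal M_H(\nu)$-splitting by an \emph{ample} line bundle $\mathcal M_H(\nu)$.

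Finally I would apply Lemma \ref{global-lem}. That lemma requires $X_H$ to be an irreducible projective strongly $F$-regular variety which admits a Frobenius $\mathcal N$-splitting for some ample $\mathcal N$; it then concludes that $X_H$ is globally $F$-regular. Projectivity of $X_H = H \times_{B_H} X$ follows because $X$ is projective and $\nicefrac{H}{B_H}$ is projective (the bundle $H\times_{B_H}X \to \nicefrac{H}{B_H}$ has projective fibre $X$ and projective base), irreducibility follows from irreducibility of $X$ and of $\nicefrac{H}{B_H}$, and strong $F$-regularity is the hypothesis on $X$ transported to $X_H$: locally $X_H$ is a product of an open subset of $\nicefrac{H}{B_H}$ (which is smooth, hence strongly $F$-regular) with an open affine of $X$, and strong $F$-regularity is preserved under such products (equivalently, one may argue via the local rings of $X_H$ being strongly $F$-regular). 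Feeding the ample $\mathcal M_H(\nu)$-splitting from the previous step into Lemma \ref{global-lem} gives that $X_H$ is globally $F$-regular, which is the assertion.

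The one point that requires a little care — and the step I expect to be the main obstacle — is the verification that $X_H = H\times_{B_H} X$ is strongly $F$-regular given only that $X$ is. One must pass through the local description of $X_H$ as (Zariski-locally over $\nicefrac{H}{B_H}$) an affine-space bundle over $X$, or dually over an open of $X$, and cite that strong $F$-regularity is stable under adjoining polynomial variables and under smooth (in fact étale-local) base change; alternatively one checks it on the level of completed local rings. This is standard but is the place where one is not simply quoting a black box from earlier in the paper.
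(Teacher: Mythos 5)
Your proposal is correct and follows essentially the same route as the paper: Theorem \ref{thm1} combined with Lemma \ref{lem1} yields a Frobenius splitting of $X_H$ along the ample line bundle $\mathcal M_H\bigl(2(p-1)\rho_H-\lambda_{|T_H}\bigr)$, strong $F$-regularity passes from $X$ to $X_H$ (a point the paper asserts in one line and you justify, correctly, via the locally trivial fibration over the smooth base $\nicefrac{H}{B_H}$), and Lemma \ref{global-lem} then gives global $F$-regularity. Your extra care about dominance versus regularity and about the transfer of strong $F$-regularity only fills in details the paper leaves implicit.
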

\begin{proof}
By Lemma \ref{lem1} and Theorem \ref{thm1} we know that 
$X_H$ admits a Frobenius splitting along an ample divisor. 
Moreover, as $X$ is strongle $F$-regular the same is the case
for $X_H$. Now apply Lemma \ref{global-lem}.
 \end{proof}

\section{Surjectivity condition}

In this section, we discuss one of the conditions in Thereom
\ref{thm1} about the surjectivity of the restriction map 
\begin{equation}
\label{surj}
\nabla(\l) \to \nabla^H(\l_{|T_H} ).
\end{equation}
The first observation is the following 

\begin{lem}
Assume that the restriction map (\ref{surj}) is surjective 
for all the fundamental weights $\omega_i$. Then the restriction 
map (\ref{surj}) is surjective for any dominant weight $\lambda$.
\end{lem}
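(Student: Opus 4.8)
The plan is to reduce the surjectivity of $\nabla(\l)\to\nabla^H(\l_{|T_H})$ for a general dominant $\l$ to the case of fundamental weights by exploiting the multiplicative structure of the section rings $\bigoplus_\l \nabla(\l)$ and $\bigoplus_\nu \nabla^H(\nu)$. Write $\l=\sum_{i\in I} n_i\o_i$ with $n_i\ge 0$. The key diagram is the commutative square
\begin{equation*}
\xymatrix{
\bigotimes_{i\in I}\nabla(\o_i)^{\otimes n_i} \ar[r] \ar[d] & \nabla(\l) \ar[d] \\
\bigotimes_{i\in I}\nabla^H\big((\o_i)_{|T_H}\big)^{\otimes n_i} \ar[r] & \nabla^H(\l_{|T_H})
}
\end{equation*}
in which the vertical arrows are the appropriate restriction maps (tensor products of restriction maps on the left) and the horizontal arrows are the multiplication maps coming from the $G$- (resp.\ $H$-) algebra structure on the total section rings of $\nicefrac{G}{B}$ (resp.\ $\nicefrac{H}{B_H}$). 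The square commutes because restriction of sections along $\nicefrac{H}{B_H}\hookrightarrow\nicefrac{G}{B}$ is a ring homomorphism. So it suffices to prove that the bottom horizontal map is surjective and, by hypothesis, that the left vertical map is surjective; composing gives that the top-then-right composite is surjective, hence the right vertical map is surjective.

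The left vertical map is surjective by the hypothesis of the lemma together with the elementary fact that a tensor product of surjections of $\kk$-vector spaces is a surjection. For the bottom horizontal map I would invoke the fact that the section ring $R_H=\bigoplus_{\nu} {\rm H}^0(\nicefrac{H}{B_H},\cl_H(\nu))$, taken over dominant $\nu$, is generated in degrees $(\o_i)_{|T_H}$ over the monoid of dominant weights of $H$ — equivalently, that each multiplication map $\nabla^H(\mu)\otimes\nabla^H(\mu')\to\nabla^H(\mu+\mu')$ of dominant weights is surjective. This is the standard surjectivity of multiplication of sections of globally generated line bundles on $\nicefrac{H}{B_H}$; it holds in all characteristics and is exactly the type of statement already used (via \cite[Thm.~3]{RR}) in the proof of Theorem~\ref{thm1}. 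One small point of care: the restriction $(\o_i)_{|T_H}$ of a fundamental weight of $G$ need not be a fundamental weight of $H$, but it is automatically a dominant weight of $H$ (since $B_H=B\cap H$), which is all the argument needs, and $\nabla^H$ of a dominant weight is the honest dual Weyl module.

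The main obstacle is the bottom horizontal surjectivity, i.e.\ making sure the multiplication map on the $H$-side is surjective \emph{onto all of} $\nabla^H(\l_{|T_H})$ when $\l_{|T_H}$ is decomposed according to the $G$-fundamental-weight decomposition of $\l$ rather than an $H$-intrinsic one. The cleanest way around this is to note that surjectivity of each two-factor multiplication $\nabla^H(\mu)\otimes\nabla^H(\mu')\twoheadrightarrow\nabla^H(\mu+\mu')$ (dominant $\mu,\mu'$) implies, by induction on the number of tensor factors, surjectivity of any multi-factor multiplication $\bigotimes_k\nabla^H(\mu_k)\twoheadrightarrow\nabla^H(\sum_k\mu_k)$ regardless of how the summands are grouped — in particular for $\mu_k$ ranging over the multiset $\{(\o_i)_{|T_H}\text{ with multiplicity }n_i\}$. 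Granting that, the diagram chase above completes the proof; the remaining verifications (commutativity of the square, compatibility of the two restriction maps with the algebra structures) are routine and follow from functoriality of ${\rm H}^0$ and the definition of $\cl_H$ as the pull-back of $\cl$.
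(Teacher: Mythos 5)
Your argument is correct and is essentially the paper's own proof: the same commutative square with the tensor product of fundamental-weight modules on top, restriction maps as vertical arrows, and multiplication maps (surjective by \cite[Theorem 3]{RR}, applied on both the $G$- and $H$-side) as horizontal arrows, from which surjectivity of $\nabla(\l)\to\nabla^H(\l_{|T_H})$ follows by the diagram chase. Your extra remarks — that $(\o_i)_{|T_H}$ need only be dominant, not fundamental, for $H$, and that multi-factor surjectivity follows by iterating the two-factor case — are correct points the paper leaves implicit.
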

\begin{proof}
We may assume that $\l=\sum_{i \in I} m_i \o_i$, where $m_i \ge 0$. Consider the following commutative diagram 
\[\xymatrix{
\nabla(\o_1)^{\otimes m_1} \otimes \cdots \otimes \nabla(\o_n)^{\otimes m_n} \ar[d]_-{f_1^{\otimes m_1} \otimes \cdots \otimes f_n^{\otimes m_n}} \ar[r]^-m & \nabla(\l) \ar[d]^-f  \\ \nabla^H(\o_1 \mid_{T_H} )^{\otimes m_1} \otimes \cdots \otimes \nabla^H(\o_n \mid_{T_H})^{\otimes m_n} \ar[r]^-{m'} & \nabla^H(\l \mid_{T_H}),}
\] where $f, f_1, \cdots, f_n$ are restriction maps and $m, m'$ are
multiplication maps. By \cite[Theorem 3]{RR}, $m$ and $m'$ are
surjective and by assumption the left vertical map is also surjective.
This proves the claim.
\end{proof}

The above results applies in case $\nabla^H({\o_1} \mid_{T_H}),
\cdots, \nabla^H({\o_n} \mid_{T_H} )$ are  irreducible $H$-modules. 
In  particular, it applies in case the characteristic $p$
of $\kk$ is sufficiently large; e.g. 

\begin{lem}
Assume that $\langle \rho_H+\o_i \mid_{T_H}, \b^\vee \rangle \le p$ 
for all fundamental weights $\o_i$ of $G$ and any positive root 
$\b$ of $H$. Then the restriction map (\ref{surj}) 
is surjective for all dominant weights $\l$ of $T$. 
\end{lem}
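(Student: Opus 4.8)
The plan is to reduce this statement to the previous lemma, whose hypothesis is the surjectivity of the restriction map $\nabla(\o_i) \to \nabla^H(\o_i|_{T_H})$ for every fundamental weight $\o_i$ of $G$. By that lemma it then follows that $\nabla(\l) \to \nabla^H(\l|_{T_H})$ is surjective for every dominant $\l$, which is exactly what we want. So the real content is: under the stated numerical bound, each $\nabla^H(\o_i|_{T_H})$ is an \emph{irreducible} $H$-module, and an $H$-equivariant map out of $\nabla(\o_i)$ onto an irreducible module is surjective as soon as it is nonzero.

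First I would record the reduction. Fix a fundamental weight $\o_i$ and write $\mu = \o_i|_{T_H}$. We already have, by construction, the $H$-equivariant restriction morphism $\nabla(\o_i) \to \nabla^H(\mu)$. Restricting the lowest weight vector of $\nabla(\o_i)$ (of weight $-\o_i$) to $\nicefrac{H}{B_H}$ gives a section of $\cl_H(\mu)$ which is easily seen to be nonzero: its restriction to the base point (the image of $eB_H$) is nonzero because the whole section is $B$-semiinvariant and does not vanish at $eB/B$. Hence the restriction map $\nabla(\o_i) \to \nabla^H(\mu)$ is a nonzero $H$-module homomorphism.

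Next I would invoke the representation-theoretic input. The hypothesis $\langle \rho_H + \o_i|_{T_H}, \b^\vee\rangle \le p$ for all positive roots $\b$ of $H$ is precisely Jantzen's bound (sum formula / linkage) guaranteeing that the dual Weyl module $\nabla^H(\mu)$ is simple: indeed $\nabla^H(\mu)$ is irreducible whenever $\langle \mu + \rho_H, \b^\vee\rangle \le p$ for all positive roots $\b$ of $H$, which follows from the fact that the relevant dominant weight has no lower dominant weight linked to it under the affine Weyl group of $H$ at level $p$. (Here one should take $\mu$ dominant for $B_H$; if some $\o_i|_{T_H}$ fails to be dominant one works instead with its dominant $W_H$-conjugate, under which $\nabla^H$ is unchanged up to the identification of induced modules, and the bound is stated for all positive roots so it is conjugation-stable.) Granting irreducibility of $\nabla^H(\mu)$, a nonzero $H$-map $\nabla(\o_i) \to \nabla^H(\mu)$ is automatically surjective, so the hypothesis of the previous lemma is met and we conclude.

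The main obstacle is purely the second paragraph's representation-theoretic claim: pinning down exactly which simplicity criterion for dual Weyl modules of $H$ makes the inequality $\langle \rho_H + \o_i|_{T_H}, \b^\vee\rangle \le p$ the right bound, and handling the bookkeeping when $\o_i|_{T_H}$ is not already $B_H$-dominant (one must pass to a Weyl-group conjugate, and check that the map $X^*(T)\to X^*(T_H)$ together with the condition ``$B_H = B\cap H$ is a Borel'' makes $\rho_H$ and the weight $\o_i|_{T_H}$ interact as required). Everything else—the reduction to fundamental weights via the earlier lemma, and the nonvanishing of the restricted lowest weight vector—is routine.
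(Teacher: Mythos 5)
Your argument is correct and is exactly the one the paper intends: this lemma is stated there without proof, as an instance of the preceding remark that surjectivity for the fundamental weights (hence, by the previous lemma, for all dominant $\l$) follows once each $\nabla^H(\o_i\mid_{T_H})$ is irreducible, which the stated bound guarantees by the standard lowest-alcove (Jantzen/linkage) criterion, and a nonzero $H$-map onto an irreducible module is surjective. The only caveat is your hedge about non-dominant restrictions: it is unnecessary (and as phrased incorrect, since inducing a non-dominant $B_H$-character gives the zero module rather than a conjugate copy), because $\o_i\mid_{T_H}$ is automatically $B_H$-dominant in this setup --- your own nonvanishing argument shows $\nabla^H(\o_i\mid_{T_H})\neq 0$, which forces dominance.
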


\subsection{}\label{Donkin} Below we give a criterion on the
surjectivity of (\ref{surj})  valid for all characteristics. 
We first recall the definition and some known results on Donkin pairs. 

An ascending chain $$0=V_0 \subset V_1 \subset V_2 \cdots \subset V_n=V$$ of submodules of a $G$-module $V$ is called a {\it good filtration} if for any $i$, $V_i/V_{i-1} \cong \oplus_\l \nabla(\l) \otimes_\kk A(\l, j)$ for some trivial $G$-modules $A(\l, j)$, where $\l$ runs over the set of dominant integral weights of $T$. 

We say that $(G, H)$ is a {\it Donkin pair} if for any $G$-module $M$ with a good filtration, the $H$-module $\res^G_H(M)$ also has a good filtration. The following are some examples of Donkin pairs that will be used in this paper :

(1) If $H$ is a Levi subgroup of $G$, then $(G, H)$ is a Donkin pair. This is proved by Donkin in \cite{D} for almost all cases and later by Mathieu in \cite{M} in full generality.

(2) If $H$ is the centralizer of a graph automorphism of $G$ or the centralizer of an involution of $G$ and the characteristic of $\kk$ is at least $3$, then $(G, H)$ is a Donkin pair. This is conjectured by Brundan in \cite{B} and proved by Van der Kallen in \cite{V}. 

\begin{lem}\label{surj-Don}
Let $(G, H)$ be a Donkin pair. Let $\l \in X^*(T)$ be a dominant weight. Then the restriction map $\nabla(\l) \to \nabla^H(\l_{|T_H} )$ is surjective for all dominant weights $\l$ of $T$. 
\end{lem}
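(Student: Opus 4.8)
The plan is to use the good filtration furnished by the Donkin property, and to pin down the restriction map by recognising $\l_{|T_H}$ as an extremal weight of $\res^G_H\nabla(\l)$. Write $M=\res^G_H\nabla(\l)$ and $\mu=\l_{|T_H}$. Since $(G,H)$ is a Donkin pair, $M$ has a good filtration as an $H$-module. Two preliminary observations. First, $\mu$ is a dominant weight of $H$: the line bundle $\cl(\l)$ on $\nicefrac{G}{B}$ is globally generated (as $\l$ is dominant), hence so is its restriction to the closed subvariety $\nicefrac{H}{B_H}$, which is $\cl_H(\mu)$, and a globally generated line bundle on $\nicefrac{H}{B_H}$ corresponds to a dominant $T_H$-weight. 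Secondly, the restriction map $r\colon M\to\nabla^H(\mu)$ is nonzero. Indeed, by Frobenius reciprocity $\Hom_H(M,\nabla^H(\mu))\cong\Hom_{B_H}\big(\nabla(\l),\kk_{-\mu}\big)$, and $r$ corresponds to the $B_H$-map given by evaluating a global section of $\cl(\l)$ at the base point of $\nicefrac{G}{B}$ and identifying the fibre $\kk_{-\l}$ with $\kk_{-\mu}$; this is surjective since $\cl(\l)$ is globally generated.

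The heart of the matter is the claim that $-\mu$ is the unique minimal $T_H$-weight of $M$ for the dominance order of $H$. To prove it, recall that $-\l$ is the lowest weight of $\nabla(\l)$, so every weight $\eta$ of $\nabla(\l)$ satisfies $\eta+\l\in\sum_{i\in I}\ZZ_{\ge 0}\,\a_i$; restricting to $T_H$ and using that each $(\a_i)_{|T_H}$ is a non-negative combination of simple roots of $H$ — it is $0$ or a positive root of $H$, since $B_H=B\cap H$ — one gets $\eta_{|T_H}\ge_H -\mu$, with equality for $\eta=-\l$. Consequently every costandard module $\nabla^H(\nu)$ occurring in (the layers of) a good filtration of $M$ has $\nu\le_H\mu$, because its lowest weight $-\nu$ is a weight of $M$ and hence $\ge_H -\mu$; and $\nabla^H(\mu)$ does occur, with multiplicity $k=\dim_\kk M_{-\mu}\ge 1$, since the $(-\mu)$-weight space of $M$ receives contributions only from those $\nabla^H(\nu)$ having $-\mu$ among their weights, forcing $\nu\ge_H\mu$ and hence $\nu=\mu$.

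Thus $\mu$ is maximal, for the dominance order of $H$, among the weights whose costandard modules appear in the good filtration of $M$. The plan is then to invoke the standard structure theory of good filtrations in the highest weight category of $H$-modules. Since $\mathrm{Ext}^1_H\big(\nabla^H(\nu),\nabla^H(\mu)\big)=0$ whenever $\nu\le_H\mu$, the filtration may be rearranged so that the $k$ copies of $\nabla^H(\mu)$ sit on top, giving a short exact sequence $0\to M'\to M\to\nabla^H(\mu)^{\oplus k}\to 0$ in which $M'$ has a good filtration by $\nabla^H(\nu)$ with $\nu<_H\mu$. A dévissage using $\Hom_H\big(\nabla^H(\nu),\nabla^H(\mu)\big)=0$ for $\nu<_H\mu$ shows $\Hom_H(M',\nabla^H(\mu))=0$, so that $\Hom_H(M,\nabla^H(\mu))\cong\Hom_H\big(\nabla^H(\mu)^{\oplus k},\nabla^H(\mu)\big)\cong\kk^{\,k}$; every nonzero element of this space is the composite of the surjection $M\twoheadrightarrow\nabla^H(\mu)^{\oplus k}$ with a nonzero map $\nabla^H(\mu)^{\oplus k}\to\nabla^H(\mu)$, hence is surjective. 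Applying this to the nonzero map $r$ proves the lemma.

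The step requiring care rather than real difficulty is the second one: everything reduces to locating $\l_{|T_H}$ precisely as the negative of the minimal weight of $\res^G_H\nabla(\l)$ — equivalently, as the maximal weight occurring in its good filtration — for it is exactly this extremality, together with the non-triviality of $r$, that forces surjectivity through the good-filtration machinery. The remaining steps are formal consequences of the Donkin property and of standard highest-weight-category arguments (the $\mathrm{Ext}^1$- and $\Hom$-vanishing among costandard modules, and the rearrangement of good filtrations).
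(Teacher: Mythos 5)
Your overall strategy is the same as the paper's: the Donkin hypothesis gives a good filtration of $M=\res^G_H\nabla(\l)$, one identifies $\mu=\l_{|T_H}$ as extremal among the weights occurring in that filtration, and the nonzero restriction map is then forced to be surjective. Your endgame (the $\mathrm{Ext}^1$-vanishing, the rearrangement of the filtration, and the computation of $\Hom_H(M,\nabla^H(\mu))$) is a heavier but standard substitute for the paper's more elementary device of taking the minimal filtration step $M_j$ that maps nonzero to $\nabla^H(\mu)$ and applying Frobenius reciprocity twice.

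However, the step you call the heart of the matter is not correct as stated. It is not true that each $(\a_i)_{|T_H}$ is $0$ or a positive root of $H$ (nor a nonnegative combination of simple roots of $H$) merely because $B_H=B\cap H$: take $H=L_J$ a Levi subgroup, which is a Donkin pair by \S\ref{Donkin}(1) and occurs in \S\ref{Ex2}; then $T_H=T$ and for $i\notin J$ the restriction $(\a_i)_{|T_H}=\a_i$ lies outside the span of $R_J^+$. Consequently your claim that $-\mu$ is the unique minimal $T_H$-weight of $M$ for the dominance order of $H$ fails: for $G=SL_3$, $H=L_{\{\a_1\}}$, $\l=\o_1$, the weight $-\o_1+\a_1+\a_2$ of $\nabla(\o_1)$ is not $\ge_H-\o_1$, and the corresponding one-dimensional constituent $\nabla^H(\nu)$ of the good filtration has $\nu\not\le_H\mu$. (The paper's own proof asserts the analogous inequality $\nu_i\le\l_{|T_H}$ with no more justification, so the gap is inherited; but your write-up makes the false root-restriction statement explicit and rests on it.) What is actually needed, in your argument as in the paper's, is only the weaker statement: if a constituent $\nabla^H(\nu)$ admits a nonzero $H$-map to $\nabla^H(\mu)$ --- equivalently, by Frobenius reciprocity, $\nu-\mu$ lies in the $\ZZ_{\ge0}$-span of the simple roots of $H$ --- then $\nu=\mu$. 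Granting this, your $\mathrm{Ext}^1$- and $\Hom$-vanishing steps go through unchanged, since no constituent satisfies $\nu>_H\mu$. But this weaker statement still requires an argument comparing the restriction to $T_H$ of the cone $\sum_{i\in I}\ZZ_{\ge0}\a_i$ (which contains $\l+\eta$ for every $T$-weight $\eta$ of $\nabla(\l)$, in particular for one with $\eta_{|T_H}=-\nu$) with the $\ZZ_{\ge0}$-span of the simple roots of $H$; it holds for Levi subgroups by linear independence of the simple roots, and in the paper's other applications, but it does not follow from the reasoning you gave.
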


\begin{proof} By definition, $\nabla(\l)$ has a good filtration. Hence $\res^G_H \nabla(\l)$ also has a good filtration
$$  0=M_0 \subset \cdots \subset M_{n-1} \subset M_n=\res^G_H \nabla(\l).$$
As $\res^G_H \nabla(\l)$ is finite dimensional we may furthermore assume that the 
quotients $M_i/M_{i-1}$ are isomorphic to $\nabla^H(\nu_i)$ for certain dominant 
$T_{H}$-weights $\nu_i$. Moreover, as $-\lambda$ is the (unique) lowest weight  
vector of  $\nabla(\l)$ we must have $\nu_i \leq \lambda_{|T_H}$. 

Now use that $\nabla(\l) \to \nabla^H(\l_{|T_H} )$ is nonzero to find a minimal $j$  such
that the induced map $M_j \rightarrow  \nabla^H(\l_{|T_H} )$ is  nonzero. In particular,
we obtain a nonzero map 
\begin{equation}
\label{good}
\nabla^H(\nu_j) \rightarrow \nabla^H(\l_{|T_H}).
\end{equation}
By Frobenius reciprocity this implies that $\nu_j \geq \lambda_{|T_H}$ and thus 
that $\nu_j = \lambda_{|T_H}$. Another use of Frobenius 
reciprocity  now implies that (\ref{good}) is the identity map which suffices
to end the proof.
 \end{proof}

\section{Frobenius splitting of $\nicefrac {P_J}{B}$}
\label{FsplitPB}

We eventually want to apply the result in Section \ref{FsplitX} to the
case where $X$ is $B$-variety of the form $\nicefrac{P_J} {B}$. In particular, we
need a good description of the Frobenius splitting properties of $\nicefrac {P_J}{B}$.
As a variety $\nicefrac{P_J}{B}$ is just the flag variety associated to the Levi subgroup 
$L_J$ and as such we already have a detailed knowledge about its Frobenius 
splitting properties. The aim of this section is to formulate this knowledge
in a $B$-equivariant way. 

\begin{lem}
\label{st-decomp}
$$\sum_{\alpha \in R_J^+} \alpha = \rho_J - w_0^J \rho_J.$$
\end{lem}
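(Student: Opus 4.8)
The plan is to compute the sum $\sum_{\alpha \in R_J^+}\alpha$, which is twice the half-sum of positive roots $\rho_{L_J}$ of the Levi subgroup $L_J$, and identify it with $\rho_J - w_0^J\rho_J$. The key observation is that $\rho_J = \sum_{j\in J}\omega_j$ is, when restricted to the subsystem $R_J$ generated by $\{\alpha_j : j\in J\}$, the half-sum of the positive roots in $R_J^+$; that is, $\langle \rho_J, \alpha_j^\vee\rangle = 1$ for all $j\in J$. Indeed, the fundamental weights $\omega_j$ for $j\in J$ are dual (in the appropriate sense) to the simple coroots $\alpha_j^\vee$ with $j\in J$, so $\rho_J$ pairs to $1$ with each such coroot, which is exactly the defining property of the half-sum of positive roots of $L_J$ modulo the part of $\rho_J$ that is $W_J$-invariant.

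First I would recall the standard fact that for any element $w$ of the Weyl group $W$, one has $\rho - w\rho = \sum_{\alpha \in R^+ \cap w^{-1}(R^-)}\alpha$, the sum over the inversions of $w$. Applying this inside the reflection subgroup $W_J$ acting on the root subsystem $R_J$, with $\rho$ replaced by any weight $\mu$ satisfying $\langle \mu, \alpha_j^\vee\rangle = 1$ for all $j\in J$, gives $\mu - w_0^J\mu = \sum_{\alpha\in R_J^+ \cap (w_0^J)^{-1}(R_J^-)}\alpha$. Since $w_0^J$ is the longest element of $W_J$, it sends every positive root of $R_J$ to a negative one, so the inversion set is all of $R_J^+$ and the right-hand side is exactly $\sum_{\alpha\in R_J^+}\alpha$. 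Thus it remains only to verify that $\rho_J$ may be used as $\mu$, i.e. that $\langle \rho_J, \alpha_j^\vee\rangle = 1$ for $j\in J$; this is immediate from $\langle \omega_i, \alpha_j^\vee\rangle = \delta_{ij}$.

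The one point requiring a little care — and the main (mild) obstacle — is the distinction between $\rho_J = \sum_{j\in J}\omega_j$ as a weight of $T$, and the honest half-sum $\rho_{L_J}$ of positive roots of $L_J$, which differs from $\rho_J$ by a weight orthogonal to all $\alpha_j^\vee$, $j\in J$ (equivalently, a $W_J$-fixed weight). Since $w_0^J$ fixes that difference, it cancels in $\rho_J - w_0^J\rho_J = \rho_{L_J} - w_0^J\rho_{L_J}$, so the ambiguity is harmless: both expressions agree. Assembling these observations: $\sum_{\alpha\in R_J^+}\alpha = \rho_{L_J} - w_0^J\rho_{L_J} = \rho_J - w_0^J\rho_J$, which is the claim. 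Alternatively, one may avoid $\rho_{L_J}$ entirely by noting that $s_j(\sum_{\alpha\in R_J^+}\alpha) = \sum_{\alpha\in R_J^+}\alpha - 2\alpha_j$ for $j\in J$ (since $s_j$ permutes $R_J^+\setminus\{\alpha_j\}$ and negates $\alpha_j$), hence $\langle \sum_{\alpha\in R_J^+}\alpha, \alpha_j^\vee\rangle = 2$, and likewise $\langle \rho_J - w_0^J\rho_J,\alpha_j^\vee\rangle = 2$ using that $w_0^J\alpha_j^\vee$ is a negative coroot of $R_J$; since both sides lie in the $\mathbb{R}$-span of $R_J$ and a weight there is determined by its pairings with the $\alpha_j^\vee$, $j\in J$, they coincide.
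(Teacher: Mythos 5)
Your argument is correct, but it follows a genuinely different route from the paper's. You work entirely inside the subsystem $R_J$: you note that $\langle \rho_J,\alpha_j^\vee\rangle = 1$ for all $j\in J$, so $\rho_J$ can play the role of $\rho$ for $L_J$, and then apply the identity ``$\mu - w\mu$ equals the sum over the inversion set'' to $w=w_0^J$, whose inversion set in $R_J$ is all of $R_J^+$. The paper instead argues globally: it splits $2\rho=\lambda^J+\lambda_J$ with $\lambda_J=\sum_{\alpha\in R_J^+}\alpha$ and $\lambda^J=\sum_{\beta\in R^+\setminus R_J^+}\beta$, uses $w_0^J(R_J^+)=-R_J^+$ together with the stability $w_0^J(R^+\setminus R_J^+)=R^+\setminus R_J^+$ to obtain $\rho-w_0^J\rho=\lambda_J$, and then strips off the $W_J$-invariant summand $\rho_{I\setminus J}$ of $\rho$ to land on $\rho_J-w_0^J\rho_J$. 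Your version is more intrinsic (the roots outside $R_J$ never appear), while the paper's avoids both the inversion-set formula and your discussion of $\rho_J$ versus the honest half-sum $\rho_{L_J}$; the underlying facts about $w_0^J$ are the same. Two small inaccuracies, neither fatal: the identity you quote should read $\rho-w\rho=\sum_{\alpha\in R^+\cap w(R^-)}\alpha$ (the inversions of $w^{-1}$), not $R^+\cap w^{-1}(R^-)$ --- harmless here because $w_0^J$ is an involution and every root of $R_J^+$ is an inversion; and in your final alternative, to conclude $\langle w_0^J\rho_J,\alpha_j^\vee\rangle=-1$ you need that $w_0^J\alpha_j^\vee$ is the negative of a \emph{simple} coroot of $R_J$ (true, since $-w_0^J$ permutes the simple roots of $R_J$), not merely a negative coroot, which would only give a value $\leq -1$.
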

\begin{proof}
Define
$$\lambda^J = \sum_{\beta \in R^+ \setminus R_J^+} \beta, ~ ~ \  \ ~ \lambda_J = \sum_{\alpha \in R_J^+} \alpha .$$
Then $2 \rho = \lambda^J + \lambda_J$. Now recall the following 
identities 
$$ w_0^J(R_J^+) = -R_J^+,$$
$$ w_0^J(R^+ \setminus R_J^+) = R^+ \setminus R_J^+.$$
It follows that 
$$ 2w_0^J \rho = w_0^J (\lambda^J + \lambda_J) = \lambda^J - \lambda_J.$$
and thus 
$$ \rho - w_0^J \rho=  \lambda_J.$$
On the other hand 
$$ w_0^J \rho = w_0^J (\rho_J + \rho_{I \setminus J}) = w_0^J \rho_J 
+ \rho_{I \setminus J},$$
and thus 
$$ \lambda_J= \rho - w_0^J \rho = \rho_J - w_0^J \rho_J.$$
\end{proof}

The trivial $P_J$-linearization on $\mathcal O_{\nicefrac{P_J}{B}}$ 
induces a $P_J$-linearization of the line bundle $\omega_{\nicefrac{P_J}{B}}$
which is associated to $B$-character 
$$ \sum_{\alpha \in R_J^+} \alpha.$$
In particular, we have a $P_J$-equivariant identification
\begin{equation}
\label{eq1}
\nabla^J\big( (p-1) \sum_{\alpha \in R_J^+} \alpha \big) =
{\rm H}^0 \big( \nicefrac{P_J}{B}, \omega_{\nicefrac{P_J}{B}}^{1-p} \big) 
 \simeq {\rm End}_F(\nicefrac{P_J}{B}).
\end{equation}
By Lemma \ref{st-decomp} this leads to the following 
central morphism
\begin{prop}
There exists a surjective $P_J$-equivariant morphism 
\begin{equation}
\label{st}
\nabla^J((p-1) \rho_J) \otimes \nabla^J( (1-p)w_0^J \rho_J) 
\rightarrow   {\rm End}_F(\nicefrac{P_J}{B}).
\end{equation}
Composing (\ref{st}) with the evaluation map defines 
a $P_J$-equivariant map
$$ \nabla^J((p-1) \rho_J) \otimes \nabla^J((1-p) w_0^J \rho_J) 
\rightarrow     k,$$
which defines an $P_J$-equivariant isomorphism 
\begin{equation}
\label{eq2}
  \nabla^J((p-1) \rho_J) \rightarrow 
 \nabla^J((1-p) w_0^J \rho_J) ^*,
\end{equation}
between irreducible $P_J$-representations.
\end{prop}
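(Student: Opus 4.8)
The plan is to start from the $P_J$-equivariant identification in (\ref{eq1}), namely
$\nabla^J\big((p-1)\sum_{\alpha\in R_J^+}\alpha\big)\simeq{\rm End}_F(\nicefrac{P_J}{B})$,
and to rewrite the weight on the left-hand side using Lemma \ref{st-decomp}, which gives
$(p-1)\sum_{\alpha\in R_J^+}\alpha=(p-1)\rho_J+(1-p)w_0^J\rho_J$.
The first step is therefore to invoke the Ramanan--Ramanathan surjectivity result \cite[Thm.\ 3]{RR} (already used earlier in the excerpt) for the two dominant $L_J$-weights $(p-1)\rho_J$ and $(1-p)w_0^J\rho_J$: note that $-w_0^J\rho_J$ is dominant for $B\cap L_J$ because $w_0^J$ sends $R_J^+$ to $-R_J^+$, so $(1-p)w_0^J\rho_J=(p-1)(-w_0^J\rho_J)$ is genuinely dominant. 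This yields a surjective multiplication map
$\nabla^J((p-1)\rho_J)\otimes\nabla^J((1-p)w_0^J\rho_J)\to\nabla^J\big((p-1)\sum_{\alpha\in R_J^+}\alpha\big)$,
which when composed with (\ref{eq1}) is exactly the asserted surjection (\ref{st}) onto ${\rm End}_F(\nicefrac{P_J}{B})$.

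Next I would compose (\ref{st}) with the evaluation map ${\rm ev}_{\nicefrac{P_J}{B}}:{\rm End}_F(\nicefrac{P_J}{B})\to\kk$. Since $\nicefrac{P_J}{B}$ is a flag variety it is Frobenius split, so ${\rm ev}$ is nonzero, and hence the resulting $P_J$-equivariant pairing
$\nabla^J((p-1)\rho_J)\otimes\nabla^J((1-p)w_0^J\rho_J)\to\kk$
is nonzero. The key representation-theoretic input is that both $\nabla^J((p-1)\rho_J)$ and $\nabla^J((1-p)w_0^J\rho_J)=\nabla^J((p-1)(-w_0^J\rho_J))$ are irreducible $L_J$-modules: this is the Steinberg module for $L_J$ (up to the identification $-w_0^J\rho_J=\rho_J$ as $\rho_J$ is $w_0^J$-fixed only after applying $-w_0^J$, which is a diagram automorphism), and the Steinberg module is irreducible and self-dual by the standard facts recalled in the Notation section. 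A nonzero $L_J$-equivariant pairing between two irreducible modules is automatically perfect, so it induces an $L_J$-equivariant isomorphism $\nabla^J((p-1)\rho_J)\xrightarrow{\sim}\nabla^J((1-p)w_0^J\rho_J)^*$, which is (\ref{eq2}).

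The one point requiring a little care, and the likeliest place to slip, is the bookkeeping that $-w_0^J\rho_J$ is dominant and that $\nabla^J((1-p)w_0^J\rho_J)$ is the Steinberg module of $L_J$ for the opposite Borel (equivalently, that $\nabla^J((p-1)(-w_0^J\rho_J))\cong\nabla^J((p-1)\rho_J)^*$ via the longest-Weyl-element duality $\nabla(\mu)^*\cong\nabla(-w_0\mu)$ applied inside $L_J$). Once this identification is in place, irreducibility and self-duality of the Steinberg module make the nondegeneracy argument immediate. To summarize the order of steps: (i) rewrite the weight via Lemma \ref{st-decomp} and check $-w_0^J\rho_J$ is dominant; (ii) apply \cite[Thm.\ 3]{RR} to get surjectivity of multiplication, hence (\ref{st}); (iii) compose with ${\rm ev}_{\nicefrac{P_J}{B}}$, which is nonzero since $\nicefrac{P_J}{B}$ is Frobenius split; (iv) identify both tensor factors as irreducible (Steinberg) $L_J$-modules and conclude that the nonzero $L_J$-equivariant pairing is perfect, giving the isomorphism (\ref{eq2}).
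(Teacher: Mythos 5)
Your proposal is correct and follows essentially the same route as the paper: surjectivity of the multiplication map via \cite[Thm.\ 3]{RR} combined with Lemma \ref{st-decomp} and the identification (\ref{eq1}), nonvanishing of the pairing from surjectivity of (\ref{st}) plus the Frobenius splitting of $\nicefrac{P_J}{B}$, and irreducibility of both factors as (twists of) the Steinberg module to upgrade the nonzero pairing to the isomorphism (\ref{eq2}). The only refinement in the paper is that it passes to the simply connected commutator group $G_J=(L_J,L_J)$, where both tensor factors literally become ${\rm St}_J$ (as $L_J$-modules they differ by a central character), which makes precise the identification you handle somewhat loosely via $-w_0^J\rho_J$.
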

\begin{proof}
The map (\ref{st}) is just the surjective multiplication map 
$$\nabla^J((p-1) \rho_J) \otimes \nabla^J((1-p) w_0^J \rho_J) 
\rightarrow \nabla^J((p-1) (\rho_J-w_0^J \rho_J)), $$
composed with the identification (\ref{eq1}) using
Lemma \ref{st-decomp}.

For the second part, it suffices to prove the irreducibility
claim and that  (\ref{eq2}) is non-zero.  That (\ref{eq2})
is non-zero follows as (\ref{st}) is surjective and as $\nicefrac{P_J}{B}$  
admits a Frobenius splitting. Consider next the simply connected 
commutator group $G_J = (L_J, L_J)$.
As $G_J$-modules both tensor-factors
on the left hand side of (\ref{st}) are equal 
to the associated Steinberg module ${\rm St}_J$
of $G_J$. In particular, they are irreducible as 
$G_J$-modules and thus also as $P_J$-modules.
\end{proof}

Let $v_J^+$ denote a highest weight vector of 
the $P_J$-module $\nabla^J((1-p) w_0^J \rho_J)$.
The weight  of $v_J^+$ is then $(p-1) \rho_J$. The
element $v^+_J$
vanishes with multiplicity $(p-1)$ along the union of 
the codimension 1 Schubert varieties in $\nicefrac{P_J}{B}$. 
In particular, we obtain (cf. \cite[Prop.1.3.11]{BK})

\begin{cor}
\label{compatible}
The restriction of (\ref{st}) to the highest weight space 
defines a $B$-equivariant morphism 
$$\nabla^J((p-1) \rho_J) \otimes  \kk_{(p-1)\rho_J}
\rightarrow   {\rm End}_F(\nicefrac{P_J}{B}, \{ X(w) \}_{w \in W_J}),$$
where the image is compatible with all Schubert varieties contained 
in $\nicefrac{P_J}{B}$ and contains a Frobenius splitting of $\nicefrac{P_J}{B}$.
\end{cor}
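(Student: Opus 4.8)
The plan is to extract the desired $B$-equivariant morphism by restricting the $P_J$-equivariant surjection (\ref{st}) to the highest weight line of its second tensor factor, and then to verify the two claimed properties — compatibility with the Schubert varieties in $\nicefrac{P_J}{B}$ and containment of a Frobenius splitting — by identifying the resulting splitting explicitly with the section $v_J^+$ described in the text just above the statement. First I would set $\mathfrak{m} = v_J^+ \in \nabla^J((1-p)w_0^J\rho_J) = {\rm H}^0(\nicefrac{P_J}{B}, \mathcal L_J((1-p)w_0^J\rho_J))$, the highest weight vector, which spans a $B$-stable line of weight $(p-1)\rho_J$ (this is exactly Lemma \ref{st-decomp} applied to the weight computation $\rho_J - w_0^J\rho_J = \sum_{\alpha \in R_J^+}\alpha$). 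Composing (\ref{st}) with the inclusion $\kk_{(p-1)\rho_J} \hookrightarrow \nabla^J((1-p)w_0^J\rho_J)$, $1 \mapsto v_J^+$, yields a $B$-equivariant map
\[
\nabla^J((p-1)\rho_J) \otimes \kk_{(p-1)\rho_J} \longrightarrow {\rm End}_F(\nicefrac{P_J}{B}).
\]

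Next I would show the image lands in ${\rm End}_F(\nicefrac{P_J}{B}, \{X(w)\}_{w\in W_J})$. This is the compatibility statement, and it is the place where the vanishing order of $v_J^+$ enters. The reference \cite[Prop.~1.3.11]{BK} (and the surrounding discussion in \cite{BK} on canonical splittings of flag varieties) says precisely that a Frobenius $\mathcal M$-splitting of $\nicefrac{P_J}{B}$ associated to a section vanishing with multiplicity $\geq p-1$ along the union $D$ of codimension-one Schubert varieties is automatically compatible with every Schubert subvariety. So I would invoke the fact, recalled in the text, that $v_J^+$ vanishes with multiplicity exactly $p-1$ along $D = \bigcup_{w\in W_J,\ l(w)=l(w_0^J)-1} X(w)$: indeed $\mathcal L_J((1-p)w_0^J\rho_J)$ restricted to the Levi flag variety has the property that its highest weight section cuts out $(p-1)D$, because $-(1-p)w_0^J\rho_J + (p-1)\rho_J = (p-1)(\rho_J - w_0^J\rho_J) = (p-1)\sum_{\alpha\in R_J^+}\alpha$ is $(p-1)$ times the sum of the classes of the boundary divisors. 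Feeding this through the canonical-splitting criterion gives compatibility with all $X(w)$, $w \in W_J$, and then by Lemma \ref{compatible1}(1) with every Schubert variety contained in $\nicefrac{P_J}{B}$ (they are all of this form, as $\nicefrac{P_J}{B}$ is the flag variety of $L_J$).

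Finally I would check that the image actually contains a Frobenius splitting — i.e.\ that for some $v$ in the first factor, the pair $(v, v_J^+)$ produces a genuine splitting and not merely an element of ${\rm End}_F$. By the Proposition preceding the Corollary, composing (\ref{st}) with the evaluation map gives a perfect pairing realizing the isomorphism (\ref{eq2}) between the two irreducible $P_J$-modules $\nabla^J((p-1)\rho_J)$ and $\nabla^J((1-p)w_0^J\rho_J)^*$. Nondegeneracy of this pairing means that for the fixed nonzero vector $v_J^+$ in the second factor there is some $v$ in the first factor with $\mathrm{ev}_{\nicefrac{P_J}{B}}$ of $(v,v_J^+)$'s image nonzero; since $\nicefrac{P_J}{B}$ is Frobenius split, a nonzero value of the evaluation map on an element of ${\rm End}_F$ is precisely the condition for that element (after rescaling) to be a splitting. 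Hence the image of the restricted morphism contains a Frobenius splitting, as claimed.

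The main obstacle is the compatibility claim in the second paragraph: one must be careful that the vanishing multiplicity of $v_J^+$ along the boundary divisor is exactly $p-1$ (not less), which is what makes the canonical-splitting criterion of \cite{BK} applicable; the weight bookkeeping via Lemma \ref{st-decomp} is what pins this down, and everything else is a formal consequence of the Proposition and Lemma \ref{compatible1}.
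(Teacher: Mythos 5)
Your proposal is correct and takes essentially the same route as the paper, which deduces the corollary from the paragraph preceding it (the highest weight vector $v_J^+$ has weight $(p-1)\rho_J$ and vanishes to order $p-1$ along the codimension-one Schubert divisors, so \cite[Prop.~1.3.11]{BK} gives compatibility) together with the nondegeneracy of the pairing from the Proposition, exactly as you argue. One cosmetic slip: $-(1-p)w_0^J\rho_J+(p-1)\rho_J=(p-1)(\rho_J+w_0^J\rho_J)$, not $(p-1)(\rho_J-w_0^J\rho_J)$; the multiplicity-$(p-1)$ vanishing is instead justified by $\langle -w_0^J(p-1)\rho_J,\alpha_j^\vee\rangle=p-1$ for all $j\in J$ (or simply by the paper's own assertion, which you also cite).
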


\subsection{Frobenius $\mathcal M$-splitting}

We now want to formulate a slightly more precise statement 
based on the observations above. Define $\mathcal M = 
\mathcal L_J((p-1)\rho_J)$ and start by observing 
the $P_J$-equivariant identification
\begin{equation}
\label{Msplit}
{\rm End}^{ \mathcal M}_F\big(
\nicefrac{P_j}{B} \big) \simeq 
{\rm H}^0 \big( \nicefrac{P_J}{B}, \omega_{\nicefrac{P_J}{B}}^{(1-p)} 
\otimes \mathcal M^{-1} \big) \simeq
\nabla^J\big (    (1-p)w_0^J \rho_j \big)   .
\end{equation}
As
\begin{equation}
\label{iden}
{\rm H}^0\big( 
\nicefrac{P_J}{B}, \mathcal M\big) \simeq
\nabla^J \big( (p-1)  \rho_J \big),
\end{equation}
we may relate  (\ref{st}) with (\ref{Msplit}) by the natural map
$$ {\rm H}^0\big( 
\nicefrac{P_J}{B}, \mathcal M\big) \otimes 
{\rm End}^{\mathcal M}_F\big(\nicefrac{P_J}{B}\big)
\rightarrow {\rm End}^{ }_F\big(\nicefrac{P_J}{B}\big).$$
The statement in Corollary \ref{compatible} then means that
the highest weight line in ${\rm End}^{\mathcal M}_F(\nicefrac{P_J}{B})$
is compatible with any Schubert variety $X(w)$, $w \in W_J$.
More precisely we find the following result related to 
the setup in Section \ref{FsplitX} :
\begin{prop}
\label{prop1}
There exists a $B$-equivariant map
\begin{equation}
\label{eq3}
\theta_J :  \kk_{(p-1) \rho_J} \rightarrow {\rm End}^{\mathcal M}_F\big(\nicefrac{P_J}{B}
, \{ X(w) \}_{w \in W_J}\big),
\end{equation}
such that when
$$\phi_J : \nabla\big((p-1) \rho_J \big) \rightarrow \nabla^J \big( (p-1)  \rho_J \big)  \simeq 
{\rm H}^0\big( \nicefrac{P_J}{B}, \mathcal M\big),
$$
denotes the restricition map, then the induced $B$-equivariant map
$$ (\theta_J, \phi_J) :  \nabla\big((p-1) \rho_J \big)  
\otimes  \kk_{(p-1) \rho_J} \rightarrow  {\rm End}^{ 
}_F\big(\nicefrac{P_J}{B}
, \{ X(w) \}_{w \in W_J} \big),$$
contains a Frobenius splitting of $\nicefrac{P_J}{B}$ in its image. 
\end{prop}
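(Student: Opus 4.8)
The plan is to assemble Proposition \ref{prop1} directly from Corollary \ref{compatible} and the two $P_J$-equivariant identifications (\ref{Msplit}) and (\ref{iden}). First I would produce the map $\theta_J$: by (\ref{Msplit}) the space ${\rm End}^{\mathcal M}_F(\nicefrac{P_J}{B})$ is $P_J$-equivariantly isomorphic to $\nabla^J\big((1-p)w_0^J\rho_J\big)$, whose highest weight vector $v_J^+$ has weight $(p-1)\rho_J$ (as recorded just before Corollary \ref{compatible}). The inclusion of the highest weight line $\kk_{(p-1)\rho_J} \hookrightarrow \nabla^J\big((1-p)w_0^J\rho_J\big) \simeq {\rm End}^{\mathcal M}_F(\nicefrac{P_J}{B})$ is then $B$-equivariant, and the content of Corollary \ref{compatible} — phrased there via the surjection (\ref{st}) and the vanishing of $v_J^+$ with multiplicity $p-1$ along the codimension-one Schubert varieties in $\nicefrac{P_J}{B}$ — is exactly that this line lands inside ${\rm End}^{\mathcal M}_F\big(\nicefrac{P_J}{B},\{X(w)\}_{w\in W_J}\big)$. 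That defines $\theta_J$ as in (\ref{eq3}). (Here one uses that, over the parabolic $\nicefrac{P_J}{B}$, every Schubert variety $X(w)$ with $w\in W_J$ is an intersection of codimension-one Schubert varieties, so compatibility along the divisorial ones propagates to all of them by Lemma \ref{compatible1}(2).)

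Next I would identify the map $(\theta_J,\phi_J)$ with the one already shown to contain a Frobenius splitting. Under (\ref{iden}), $\phi_J$ is the restriction map $\nabla\big((p-1)\rho_J\big)\to\nabla^J\big((p-1)\rho_J\big)\simeq{\rm H}^0(\nicefrac{P_J}{B},\mathcal M)$, and tensoring with $\theta_J$ and composing with the canonical pairing ${\rm H}^0(\nicefrac{P_J}{B},\mathcal M)\otimes{\rm End}^{\mathcal M}_F(\nicefrac{P_J}{B})\to{\rm End}_F(\nicefrac{P_J}{B})$ produces $(\theta_J,\phi_J)$. The point is that the highest weight vector of $\nabla\big((p-1)\rho_J\big)$ maps, under $\phi_J$, to (a nonzero multiple of) the highest weight vector of $\nabla^J\big((p-1)\rho_J\big)$, because the restriction map is a nonzero $P_J$-equivariant map between modules whose highest weight spaces are one-dimensional of the same weight $(p-1)\rho_J$. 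Pairing that highest weight vector in ${\rm H}^0(\nicefrac{P_J}{B},\mathcal M)$ with the highest weight line $\theta_J(\kk_{(p-1)\rho_J})$ in ${\rm End}^{\mathcal M}_F(\nicefrac{P_J}{B})$ realizes precisely the restriction of the surjection (\ref{st}) to the highest weight space, which by Corollary \ref{compatible} is a Frobenius splitting of $\nicefrac{P_J}{B}$ compatible with all $X(w)$, $w\in W_J$. Hence a Frobenius splitting lies in the image of $(\theta_J,\phi_J)$, and it lies in ${\rm End}_F\big(\nicefrac{P_J}{B},\{X(w)\}_{w\in W_J}\big)$ since $\theta_J$ already takes values in the compatible subspace and that property is preserved by the pairing map.

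The step I expect to require the most care is verifying that $\theta_J$ genuinely lands in the compatible subspace ${\rm End}^{\mathcal M}_F\big(\nicefrac{P_J}{B},\{X(w)\}_{w\in W_J}\big)$ rather than merely in ${\rm End}^{\mathcal M}_F(\nicefrac{P_J}{B})$: one must translate the statement of Corollary \ref{compatible}, which is about a Frobenius splitting in ${\rm End}_F(\nicefrac{P_J}{B})$ obtained from the product structure (\ref{st}), back into a statement about the $\mathcal M$-splitting factor alone. This is done via the standard fact (the cited \cite[Prop.1.3.11]{BK}) that a section of $\mathcal M=\mathcal L_J((p-1)\rho_J)$ vanishing to order $p-1$ along a divisor $D$ gives an $\mathcal M$-splitting compatible with every component of $D$ — applied to $v_J^+$ and the codimension-one Schubert varieties — together with Lemma \ref{compatible1} to descend to all $X(w)$ with $w\in W_J$. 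Everything else is bookkeeping with the $P_J$-equivariant identifications and the one-dimensionality of the relevant highest weight spaces, and poses no real difficulty.
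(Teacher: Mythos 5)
Your construction of $\theta_J$ is fine, but the step where you locate a Frobenius splitting in the image of $(\theta_J,\phi_J)$ is wrong. First, the weight bookkeeping fails: with the paper's conventions $\nabla\big((p-1)\rho_J\big)$ has lowest weight $-(p-1)\rho_J$ and highest weight $-(p-1)w_0\rho_J$, and $\nabla^J\big((p-1)\rho_J\big)$ has highest weight $-(p-1)w_0^J\rho_J$; the module whose highest weight is $(p-1)\rho_J$ is $\nabla^J\big((1-p)w_0^J\rho_J\big)\simeq \End^{\mathcal M}_F(\nicefrac{P_J}{B})$, not the two modules you compare. Second, the $B$-semiinvariant (highest weight) vector of $\nabla\big((p-1)\rho_J\big)$ need not restrict to anything nonzero on $\nicefrac{P_J}{B}$: its zero divisor is $(p-1)\sum_{j\in J}X(w_0 s_j)$, and $\nicefrac{P_J}{B}=X(w_0^J)\subseteq X(w_0 s_j)$ whenever $s_j\le w_0w_0^J$ (already for $G=SL_3$, $J=\{1\}$ one has $X(s_1)\subset X(s_1s_2)$), so in general $\phi_J$ kills the highest weight vector. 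Third, and most seriously, even if the restriction were the $P_J$-highest weight vector of $\nabla^J\big((p-1)\rho_J\big)$, its pairing with $v_J^+$ is \emph{not} a splitting: both that vector and $v_J^+$ vanish to order $p-1$ along the codimension-one Schubert divisors of $\nicefrac{P_J}{B}$, so the resulting section of $\omega_{\nicefrac{P_J}{B}}^{1-p}$ vanishes there to order $2(p-1)\ge p$, which no splitting section can do. Corollary \ref{compatible} only asserts that the image of all of $\nabla^J\big((p-1)\rho_J\big)\otimes\kk_{(p-1)\rho_J}$ contains a splitting; the splitting arises from pairing $v_J^+$ with a section whose divisor is supported on the \emph{opposite} Schubert divisors (e.g.\ the lowest weight vector), not with the highest weight vector, so your "realizes precisely the restriction of (\ref{st}) to the highest weight space" misreads that corollary.

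The repair is short and is essentially what the paper's preceding discussion intends: either observe that $\phi_J$ is surjective — restriction from $\nicefrac{G}{B}$ to the Schubert variety $X(w_0^J)=\nicefrac{P_J}{B}$ is surjective for a dominant weight, or alternatively its image is a nonzero $P_J$-submodule of $\nabla^J\big((p-1)\rho_J\big)$, which is irreducible by the proposition containing (\ref{st}) — so that the image of $(\theta_J,\phi_J)$ coincides with the image of the map in Corollary \ref{compatible} and therefore contains a splitting; or argue directly with the lowest weight vector of $\nabla\big((p-1)\rho_J\big)$, which restricts to the lowest weight vector of $\nabla^J\big((p-1)\rho_J\big)$ (it does not vanish at $eB$) and whose product with $v_J^+$ is the standard Mehta--Ramanathan splitting section of the flag variety $\nicefrac{P_J}{B}$ of $L_J$. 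A smaller point: a Schubert variety is in general not an intersection of codimension-one Schubert varieties, only an irreducible component of such an intersection, so the propagation of compatibility from the divisorial $X(w_0^Js_j)$ to all $X(w)$, $w\in W_J$, needs both parts (1) and (2) of Lemma \ref{compatible1}; this is the standard argument and harmless, but should be stated correctly.
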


\section{Frobenius splitting $H \times_{B_H} X(w)$.}

We are now ready to apply the results in Section \ref{FsplitX} and 
Section \ref{FsplitPB}. 

\begin{thm}
\label{thm2}
Let $\mathcal M$ denote the line bundle $\mathcal L_J((p-1)\rho_J)$
on $\nicefrac{P_J}{B}$. If the following conditions are satisfied 
\begin{enumerate}
\item The $T_H$-weight $2 \rho_H  - \rho_J \mid_{T_H} $ is dominant,
\item The $H$-equivariant restriction morphism 
$$  \nabla \big( (p-1) \rho_J \big)  \rightarrow \nabla^H \big( (p-1) \rho_J \mid_{T_H} \big),$$
is surjective,
\end{enumerate}
then the variety $H \times_{B_H} {\nicefrac{P_J}{B}}$ admits a 
Frobenius $\mathcal M_H((p-1) (2 \rho_H - \rho_J \mid_{T_H}))$-splitting 
which is compatible with the subvarieties 
$H \times_{B_H} X(w)$, $w \in W_J$.  
If, moreover,  $2 \rho_H  - \rho_J $ is regular then the line bundle
$\mathcal M_H((p-1) (2 \rho_H - \rho_J \mid_{T_H}))$ is ample and as a 
consequence the varieties $H \times_{B_H} X(w)$, $w \in W_J$,
are globally $F$-regular. 
\end{thm}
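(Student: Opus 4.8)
The plan is to deduce Theorem~\ref{thm2} directly from Theorem~\ref{thm1} together with the concrete $B$-equivariant input furnished by Proposition~\ref{prop1}, and then to obtain the global $F$-regularity statement from Corollary~\ref{cor-1}. Concretely, I would take $X = \nicefrac{P_J}{B}$, $\lambda = (p-1)\rho_J$, and $\mathcal M = \mathcal L_J((p-1)\rho_J)$, and let the closed subvarieties be $X_i = X(w)$ for $w \in W_J$. The first thing to check is that the three hypotheses of Theorem~\ref{thm1} hold for this data. Hypothesis~(1) is exactly the content of Proposition~\ref{prop1}: the map $(\theta_J,\phi_J)$ has a Frobenius splitting of $\nicefrac{P_J}{B}$ in its image, compatibly with all $X(w)$, $w\in W_J$. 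Hypothesis~(2) requires $2(p-1)\rho_H - \lambda_{|T_H} = (p-1)(2\rho_H - \rho_J|_{T_H})$ to be a dominant $T_H$-character; since $p-1 \ge 0$, this is equivalent to condition~(1) of Theorem~\ref{thm2}, namely that $2\rho_H - \rho_J|_{T_H}$ is dominant. Hypothesis~(3) asks that the $H$-equivariant restriction $\nabla((p-1)\rho_J) \to \nabla^H((p-1)\rho_J|_{T_H})$ be surjective, which is precisely condition~(2) of Theorem~\ref{thm2}.

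Granting these, Theorem~\ref{thm1} yields that $X_H = H\times_{B_H}\nicefrac{P_J}{B}$ admits a Frobenius $\mathcal M_H(2(p-1)\rho_H - \lambda_{|T_H})$-splitting compatible with the $H\times_{B_H}X(w)$, $w\in W_J$; rewriting $2(p-1)\rho_H - \lambda_{|T_H} = (p-1)(2\rho_H - \rho_J|_{T_H})$ gives exactly the first assertion of the theorem. For the second assertion, assume in addition that $2\rho_H - \rho_J$ is regular. Here I would invoke Lemma~\ref{lem1}, noting that $\nicefrac{P_J}{B}$ sits inside the $G$-variety $\nicefrac{G}{B}$ as a closed $B$-stable subvariety and that $\mathcal M = \mathcal L_J((p-1)\rho_J)$ is the restriction of the $G$-linearized line bundle $\mathcal L((p-1)\rho_J)$, which is ample on $\nicefrac{G}{B}$ because $(p-1)\rho_J$ is dominant; one should just be slightly careful that $\mathcal L((p-1)\rho_J)$ need only be nef on $\nicefrac{G}{B}$ but becomes ample after restriction to $\nicefrac{P_J}{B}$, which is all that Lemma~\ref{lem1} actually uses (its proof only needs ampleness of $\mathcal L_H(\nu)\boxtimes \mathcal M_Z$ restricted along the closed inclusion of $H\times_{B_H}X$). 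Then $(p-1)(2\rho_H-\rho_J|_{T_H})$ is a regular dominant weight of $H$, so $\mathcal M_H((p-1)(2\rho_H-\rho_J|_{T_H}))$ is ample. Finally, since $X(w) \subseteq \nicefrac{P_J}{B}$ is a Schubert variety of the Levi $L_J$, it is strongly $F$-regular by \cite{LRT}; applying Corollary~\ref{cor-1} to each $X(w)$ in the role of ``$X$'' — with the ambient $G$-variety being $\nicefrac{G}{B}$ and $\mathcal M$ the restriction of $\mathcal L((p-1)\rho_J)$ — we conclude that each $H\times_{B_H}X(w)$ is globally $F$-regular.

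The step I expect to require the most care is the ample-line-bundle bookkeeping at the end: matching up the regularity hypothesis on $2\rho_H - \rho_J$ (a weight of $T$, restricted to $T_H$) with the regularity needed for $\nu = (p-1)(2\rho_H - \rho_J|_{T_H})$ as a weight of $H$ in Lemma~\ref{lem1}, and checking that applying Corollary~\ref{cor-1} with $X = X(w)$ is legitimate — one needs $X(w)$ to be a closed $B$-stable subvariety of a $G$-variety carrying an ample $G$-linearized line bundle whose restriction is $\mathcal M|_{X(w)}$, which again is $\nicefrac{G}{B}$ with $\mathcal L((p-1)\rho_J)$ (ample once restricted to the relevant locus). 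The remaining verifications — that the hypotheses of Theorem~\ref{thm1} translate exactly into conditions (1) and (2), and that $p-1\ge 0$ lets us pass freely between dominance/regularity of $2\rho_H-\rho_J$ and of its $(p-1)$-multiple — are routine.
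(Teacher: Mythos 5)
Your proposal is correct and follows the paper's own proof: part (1) is exactly Theorem \ref{thm1} applied with $X=\nicefrac{P_J}{B}$, $\lambda=(p-1)\rho_J$, $\mathcal M=\mathcal L_J((p-1)\rho_J)$ and the compatibility data of Proposition \ref{prop1}, while part (2) is Corollary \ref{cor-1} together with the strong $F$-regularity of Schubert varieties from \cite{LRT}. The one caveat concerns your parenthetical reading of Lemma \ref{lem1}: its proof really does use ampleness of $\mathcal M_Z$ on all of $Z$ (claiming it ``only needs ampleness of the restriction'' is circular, since that restriction is $\mathcal M_H(\nu)$ itself, the bundle whose ampleness is at stake), so the nef-versus-ample point you correctly flag for $\mathcal L((p-1)\rho_J)$ on $\nicefrac{G}{B}$ when $J\neq I$ still needs an argument --- but the paper's own two-line proof passes over exactly the same point, so your write-up is no less complete than the published one.
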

\begin{proof}
The first part of the statement follows by an application of 
Theorem \ref{thm1} and Proposition \ref{prop1}. The 
second part  follows from Corollary \ref{cor-1} by using 
that Schubert varieties are strongly $F$-regular.
\end{proof}

By applying the natural morphism
\begin{equation}
\label{nat}
\pi_J : H \times_{B_H} \nicefrac{P_J}{B} \rightarrow \nicefrac{HP_J}{B}\subseteq
\nicefrac{G}{B},
\end{equation}
we may sometimes transfer the statements in Theorem \ref{thm2}
into statements about $H$-orbit closures in $\nicefrac{G}{B}$. For 
this to work we however need (\ref{nat}) to be separable, which 
is easily seen to be equivalent to the following condition on the 
level of the Lie algebras
\begin{equation}
\label{lie}
{\rm Lie}(H) \cap {\rm Lie}(P_J) =  {\rm Lie}(H \cap P_J).
\end{equation}

\begin{cor}
\label{cor1}
Assume that the relation (\ref{lie}) as well as condition (1) and (2) of 
Theorem \ref{thm2} are satisfied. Then $\nicefrac{HP_J}{B}$
admits a Frobenius $\mathcal L((p-1)\rho_J)_{|\nicefrac{HP_J}{B}}$-splitting which 
is compatible 
with all subvarieties of the form ${H \cdot  X(w)} $, $w \in W_J$. 
If, moreover,  $2 \rho_H  - \rho_J \mid_{T_H}$ is regular then each 
$H \cdot X(w)$, $w \in W_J$, is globally $F$-regular. 
\end{cor}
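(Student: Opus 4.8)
The plan is to deduce Corollary \ref{cor1} from Theorem \ref{thm2} by transporting the Frobenius splitting and the global $F$-regularity along the morphism $\pi_J$ of (\ref{nat}), using the separability hypothesis (\ref{lie}). First I would observe that condition (\ref{lie}) makes $\pi_J : X_H = H\times_{B_H}\nicefrac{P_J}{B}\to \nicefrac{HP_J}{B}$ a separable surjective morphism of projective varieties with connected generic fibre; since $X_H$ is normal (indeed Schubert varieties and $\nicefrac{P_J}{B}$ are, and the induction construction $H\times_{B_H}(-)$ preserves this) and $\pi_J$ is birational onto its image — because over a dense open $B_H$-orbit the map $(h,x)\mapsto h\cdot x$ is injective once $\mathrm{Lie}(H)\cap\mathrm{Lie}(P_J)=\mathrm{Lie}(H\cap P_J)$ — it follows from Zariski's main theorem that the natural map $\mathcal O_{\nicefrac{HP_J}{B}}\to (\pi_J)_*\mathcal O_{X_H}$ is an isomorphism. (Alternatively one can cite \cite[Lemma 3.3.3]{BK} together with the splitting to get this; I will say it the cleanest way available.)

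Next, for the splitting statement I would push the Frobenius $\mathcal M_H((p-1)(2\rho_H-\rho_J{}_{|T_H}))$-splitting of $X_H$ produced by Theorem \ref{thm2} forward along $\pi_J$. Here I note that $\mathcal M_H$ is the pull-back under $\pi_J$ of $\mathcal L((p-1)\rho_J)_{|\nicefrac{HP_J}{B}}$ (this is exactly the content of the computation in Lemma \ref{lem1}, with $Z=\nicefrac{G}{B}$, $\mathcal M_Z=\mathcal L((p-1)\rho_J)$), and — invoking the Remark after Theorem \ref{thm1} — that $X_H$ already carries a Frobenius $\mathcal M_H$-splitting compatible with every $(X(w))_H$, $w\in W_J$, with the relevant section $s_1$ lying in the image of (\ref{ind-theta0}). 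Since $(\pi_J)_*\mathcal O_{X_H}=\mathcal O_{\nicefrac{HP_J}{B}}$ and $\mathcal M_H=\pi_J^*\mathcal L((p-1)\rho_J)$, applying $(\pi_J)_*$ to the splitting $F_*\mathcal M_H\to\mathcal O_{X_H}$ and using the projection formula yields a Frobenius $\mathcal L((p-1)\rho_J)_{|\nicefrac{HP_J}{B}}$-splitting of $\nicefrac{HP_J}{B}$. Compatibility is preserved: because $\pi_J(X(w)_H)=H\cdot X(w)$ and, by Lemma \ref{pushforward} applied to $f=\pi_J$, $X=X_H$, $X'=(X(w))_H$, the induced map $\mathcal O_{H\cdot X(w)}\to(\pi_J)_*\mathcal O_{(X(w))_H}$ is an isomorphism, the pushed-forward splitting sends the ideal of $H\cdot X(w)$ into itself.

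For the global $F$-regularity assertion under the extra hypothesis that $2\rho_H-\rho_J{}_{|T_H}$ is regular, I would argue as follows. Fix $w\in W_J$. By Theorem \ref{thm2}, $(X(w))_H = H\times_{B_H}X(w)$ is globally $F$-regular (this is exactly the second assertion there, obtained from Corollary \ref{cor-1} using that $X(w)$ is strongly $F$-regular). Now apply Lemma \ref{pushforward} with $f$ the restriction of $\pi_J$ to $(X(w))_H$: its image is $H\cdot X(w)$, the map $\mathcal O_{H\cdot X(w)}\to f_*\mathcal O_{(X(w))_H}$ is an isomorphism by that lemma, and then Lemma \ref{glob-push} gives that $H\cdot X(w)$ is globally $F$-regular. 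One small point to verify along the way is that the hypotheses of Lemma \ref{pushforward} are met for this restricted map — i.e.\ that $\mathcal O_{\nicefrac{HP_J}{B}}\to(\pi_J)_*\mathcal O_{X_H}$ is an isomorphism and that $X_H$ admits a Frobenius $\mathcal M_H$-splitting compatible with $(X(w))_H$ — both of which are already in hand from the previous paragraph and the Remark after Theorem \ref{thm1}.

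The main obstacle, and the step deserving the most care, is establishing that $\pi_J$ satisfies $\mathcal O_{\nicefrac{HP_J}{B}}\xrightarrow{\ \sim\ }(\pi_J)_*\mathcal O_{X_H}$: this is precisely where separability (\ref{lie}) enters, and it is what converts the birational-onto-image property into the sheaf-theoretic statement that the pushforward lemmas require. Everything downstream — transporting the $\mathcal M$-splitting, preserving compatibilities, and transferring global $F$-regularity — is then a mechanical application of Lemmas \ref{glob-push}, \ref{pushforward} and the Remark after Theorem \ref{thm1}. I would also remark, for completeness, that the regularity of $2\rho_H-\rho_J{}_{|T_H}$ is used only through Theorem \ref{thm2} to make the auxiliary line bundle ample (so that Lemma \ref{global-lem}, hence the global $F$-regularity of $(X(w))_H$, is available); the splitting half of the corollary needs no such regularity.
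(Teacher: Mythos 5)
Your overall strategy is the right one (and it matches the paper's: push the $\mathcal M_H$-splitting from the Remark after Theorem \ref{thm1} forward along $\pi_J$, using $\mathcal M_H\simeq \pi_J^*\mathcal L((p-1)\rho_J)$ and $(\pi_J)_*\mathcal O_{X_H}=\mathcal O_{\nicefrac{HP_J}{B}}$, then transfer global $F$-regularity via Lemmas \ref{glob-push} and \ref{pushforward}). But the step you yourself single out as the crux is justified incorrectly. The map $\pi_J : H\times_{B_H}\nicefrac{P_J}{B}\to\nicefrac{HP_J}{B}$ is \emph{not} birational onto its image: its fibres are copies of $\nicefrac{(P_J\cap H)}{B_H}$, which are positive-dimensional whenever $P_J\cap H$ strictly contains $B_H$ — for instance in the case $J=I$ (the case used for Corollary \ref{HX3}) the fibre is all of $\nicefrac{H}{B_H}$. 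So the "normal $+$ birational $+$ Zariski's main theorem" argument collapses, and the parenthetical fallback ("cite \cite{BK} together with the splitting") is not an argument. The correct use of hypothesis (\ref{lie}) is different: separability makes $\nicefrac{H}{P_J\cap H}\to\nicefrac{G}{P_J}$ a closed embedding, so that $\nicefrac{HP_J}{B}$, being the pull-back of this closed subvariety under $\nicefrac{G}{B}=G\times_{P_J}\nicefrac{P_J}{B}\to\nicefrac{G}{P_J}$, is identified with $H\times_{P_J\cap H}\nicefrac{P_J}{B}$; then $\pi_J$ factors as the locally trivial $\nicefrac{(P_J\cap H)}{B_H}$-bundle $H\times_{B_H}\nicefrac{P_J}{B}\to H\times_{P_J\cap H}\nicefrac{P_J}{B}$ followed by this identification, and since the fibre is a complete flag variety with $\mathrm{H}^0$ of its structure sheaf equal to $\kk$, one gets $\mathcal O_{\nicefrac{HP_J}{B}}\xrightarrow{\sim}(\pi_J)_*\mathcal O_{X_H}$. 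Without some such fibration (or connectedness-of-fibres plus normality-of-target) argument, your proof has a genuine gap at its central point.

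A secondary issue: you invoke Lemma \ref{pushforward} with the $\mathcal M_H$-splitting, but that lemma requires the splitting to be along an \emph{ample} line bundle on $X_H$, and $\mathcal M_H=\pi_J^*\mathcal L((p-1)\rho_J)$ is only semi-ample (it is trivial on the positive-dimensional fibres of $\pi_J$). For the compatibility assertion in the first half of the corollary this detour is unnecessary anyway: once $(\pi_J)_*\mathcal O_{X_H}=\mathcal O_{\nicefrac{HP_J}{B}}$ is known, the pushed-forward splitting is automatically compatible with $\pi_J\big(H\times_{B_H}X(w)\big)=H\cdot X(w)$ (this is the standard fact cited in the paper as \cite[Lemma 1.1.8]{BK}). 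For the global $F$-regularity half, where you do need Lemma \ref{pushforward}, you should feed it the Frobenius $\mathcal M_H\big((p-1)(2\rho_H-\rho_J\mid_{T_H})\big)$-splitting of Theorem \ref{thm2}, which is along an ample line bundle precisely because $2\rho_H-\rho_J\mid_{T_H}$ is assumed regular there (Lemma \ref{lem1}); with that substitution, the rest of your final paragraph (global $F$-regularity of $(X(w))_H$ from Theorem \ref{thm2}, then Lemma \ref{pushforward} and Lemma \ref{glob-push}) agrees with the paper.
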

\begin{proof}
Use the notation $\mathcal M$ to denote the line bundle
 $\mathcal L_J((p-1)\rho_J)$.
Applying the remark in Section \ref{FsplitX} we find, as in 
Theorem \ref{thm2}, that the variety $H \times_{B_H} 
{\nicefrac{P_J}{B}}$ admits a Frobenius $\mathcal M_H$-splitting 
which is compatible with the subvarieties $H \times_{B_H} X(w)$, 
$w \in W_J$; i.e. there exists a map
\begin{equation}
\label{comp}
 F_* \mathcal M_H \rightarrow  \mathcal O_{H \times_{B_H} 
{\nicefrac{P_J}{B}}},
\end{equation}
compatible with all subvarieties $H \times_{B_H} X(w)$, 
$w \in W_J$, and a global section $s$ of $\mathcal M_H$  
such the composition of (\ref{comp}) with 
\begin{equation}
\label{comp1}
 F_* \mathcal O_{ H \times_{B_H} 
{\nicefrac{P_J}{B}}} \xrightarrow{F_* s} 
 F_* \mathcal M_H,
\end{equation}
defines a Frobenius splitting of 
$H \times_{B_H} 
{\nicefrac{P_J}{B}}$. As observed in the remark in 
Section \ref{FsplitX} we may even assume that $s$ 
is contained  in the image of the morphism 
\begin{equation}
\label{s} 
\nabla((p-1) \rho_J) \rightarrow {\rm Ind}_{B_H}^H 
\big({\rm H}^0 ( \nicefrac{P_J}{ B}, \mathcal M ) \big)
=  {\rm Ind}_{B_H}^H \big( \nabla^J((p-1) \rho_J) \big),
\end{equation}
i.e. we may assume that $s$ is the pull-back $\pi_J^*(s')$
of a global section $s'$ of the line bundle $\mathcal L\big((p-1) \rho_J
\big)_{|\nicefrac{HP_J}{B}}$. In this connection we notice 
the identity $\mathcal M_H \simeq \pi_J^*\big (\mathcal L\big( (p-1) \rho_J 
\big) \big)$ which follows by arguing as in the proof of Lemma \ref{lem1}.

We now claim that the morphism 
\begin{equation}
\label{claim}
 \mathcal O_{\nicefrac{HP_J}{B}} \rightarrow 
 (\pi_J)_* \mathcal O_{H \times_{B_H} \nicefrac{P_J} {B}},
\end{equation}
induced by $\pi_J$, is an isomorphism. If so, we may apply 
$(\pi_J)_*$ to the composition of (\ref{comp}) and (\ref{comp1}) 
to obtain a map
$$ F_*  \mathcal O_{\nicefrac{HP_J}{B}}  \xrightarrow{F_* s'}
 F_* \mathcal L\big((p-1) \rho_J \big)_{|\nicefrac{HP_J}{B}} 
\rightarrow  \mathcal O_{\nicefrac{HP_J}{B}},$$
defining a Frobenius $\mathcal L\big((p-1) \rho_J \big)_{|\nicefrac{HP_J}{B}}$-splitting
of $\nicefrac{HP_J}{B}$ compatible with all the subvarieties (cf. \cite[Lemma 1.1.8] {BK})
$$\nicefrac{HBwB}{B} = \pi_J\big(H \times_{B_H} X(w) \big), w \in W_J.$$
This is exactly the first part of the statement of this corollary. 

To prove the claim consider the natural morphism
\begin{equation} 
\label{closed}
\nicefrac{H}{P_J \cap H} \rightarrow \nicefrac{G}{P_J}.
\end{equation}
As relation (\ref{lie}) is assumed to be satisfied the morphism 
(\ref{closed}) is a closed embedding. In particular, the pull 
back $ \nicefrac{HP_J}{B}$ of the closed subvariety 
$\nicefrac{H}{P_J \cap H}$ by the morphism 
$$ \nicefrac{ G}{B} = G \times_{P_J} \nicefrac{P_J}{B}
\rightarrow \nicefrac{G}{P_J},$$
is isomorphic to $H \times_{P_J \cap H} \nicefrac{P_J}{B} $.
The claim now follows as the natural morphism 
$$ H \times_{B_H} \nicefrac{P_J}{B} \rightarrow
H \times_{P_J \cap H} \nicefrac{P_J}{B},$$
is a locally trivial $\nicefrac{P_J \cap H}{B_H}$-bundle. 
This ends the proof of the first part of the statement.

As to the second statement the global $F$-regularity of 
$\nicefrac{HP_J}{B}$ follows from Theorem \ref{thm2}  
using the claim above 
and the fact that global $F$-regularity is preserved by 
push forward (Lemma \ref{glob-push}). The global $F$-regularity of 
$H \cdot X(w)$, $w \in W_J$, now follows 
in the same way by applying Lemma \ref{pushforward}.
\end{proof}

\begin{rmk*}
In case $\nicefrac{G}{B}$ contains a dense $H$-orbit 
$H gB$ and $\nabla\big({(p-1)\rho_J}\big)$ is irreducible 
as a $G$-module we may, in the proof of the above 
corollary, choose the section $s$ to be the image under 
(\ref{s}) of the element $g v_+$. Here $v_+$ denotes a 
the highest weight vector in $\nabla\big({(p-1)\rho_J}\big)$.
This follows from the remark in Section \ref{FsplitX}
as $g v_+$ generates $\nabla \big({(p-1)\rho_J}\big)$
as an $H$-module. In this case the zero divisor associated 
to $s'$ is the sum 
$$ D= (p-1) \sum_{i \in J} \big(g \cdot X(w_0 s_i) \cap \nicefrac{H P_J}{B} \big). $$
The Frobenius splitting in Corollary \ref{cor1} may therefore
also be considered as a Frobenius $D$-splitting
(cf. \cite[$\S$ 1.4]{BK}).

\end{rmk*}

\begin{rmk*}
The assumption in Corollary \ref{cor1} that the 
relation (\ref{lie}) is satisfied is necessary to 
make the proof work. This assumption does not follow from 
the rest of the assumptions as can be seen by the 
following example : Consider 
 the case $G=SL_2 \times SL_2$
and $H=\{(g, F(g)); g \in SL_2\} \subset G$.
If $P_J$ is chosen to be the set of pair $(g_1, g_2) \in G $ 
with the condition that $g_2$ upper triangular, then the 
natural morphism (\ref{closed}) is not a closed embedding.
However, the rest of the assumption in Corollary \ref{cor1}
are satisfied. 
\end{rmk*}

As relation (\ref{lie}) is always satisfied in case $J=I$ we find 

\begin{cor}\label{HX3}
If $2 \rho_H-\rho_{|T_H}$ is a dominant $T_H$-weight and if  the restriction map
$$  \nabla \big( (p-1) \rho \big)  \rightarrow \nabla^H \big( (p-1) \rho_{|T_H}\big),$$
is surjective, then there exists a Frobenius $\mathcal L\big ( (p-1) \rho)$-splitting of
$\nicefrac{G}{B}$ that is compatibly with all subvarieties over the form  $H \cdot X(w)$,  $w \in W$. 
In particular,  for any dominant weight $\l$ of $T$ and any $w \in W$ we
have
\begin{enumerate}
\item ${\rm H}^i(H \cdot X(w), \cl(\l))=0$ for $i \ge 1$.
\item The restriction map
$${\rm H}^0(\nicefrac{G}{B}, \cl(\l)) \to {\rm H}^0(H \cdot X(w), \cl(\l)),$$
 is surjective. 
\end{enumerate}

\end{cor}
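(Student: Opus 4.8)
The plan is to derive Corollary \ref{HX3} directly from Corollary \ref{cor1} specialized to the case $J = I$. First I would observe that when $J = I$ the parabolic $P_J$ equals $G$, so $\rho_J = \rho$, the flag variety $\nicefrac{P_J}{B}$ is all of $\nicefrac{G}{B}$, the Weyl group $W_J$ is the full Weyl group $W$, the homogeneous space $\nicefrac{HP_J}{B}$ equals $\nicefrac{G}{B}$ itself, and the orbit closures $\nicefrac{\overline{HBwB}}{B}$ are exactly the varieties $H \cdot X(w)$. Crucially, the separability condition (\ref{lie}), namely $\Lie(H) \cap \Lie(P_J) = \Lie(H \cap P_J)$, is automatic when $P_J = G$, since both sides equal $\Lie(H)$; this is precisely the remark preceding the corollary that "relation (\ref{lie}) is always satisfied in case $J = I$." Thus conditions (1) and (2) of Theorem \ref{thm2} become exactly the two hypotheses of Corollary \ref{HX3}: that $2\rho_H - \rho_{|T_H}$ is a dominant $T_H$-weight, and that the restriction map $\nabla((p-1)\rho) \to \nabla^H((p-1)\rho_{|T_H})$ is surjective.

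With these identifications in place, Corollary \ref{cor1} immediately yields that $\nicefrac{G}{B}$ admits a Frobenius $\mathcal{L}((p-1)\rho)$-splitting compatible with all the subvarieties $H \cdot X(w)$, $w \in W$, which is the first assertion. For the vanishing statement (1), I would invoke the standard cohomological consequence of Frobenius $\mathcal{M}$-splitting along an ample divisor: since $\mathcal{L}((p-1)\rho)$ is ample on $\nicefrac{G}{B}$ and the splitting is compatible with $H \cdot X(w)$, the induced Frobenius splitting on $H \cdot X(w)$ is a splitting along an ample divisor (the restriction of $\mathcal{L}((p-1)\rho)$), and hence all higher cohomology groups ${\rm H}^i(H \cdot X(w), \mathcal{L}(\lambda))$ of any nef — in particular ample or globally generated, hence any dominant $\lambda$ — line bundle vanish for $i \ge 1$; this is the B-splitting vanishing theorem, cf. \cite[Thm.\ 1.4.8]{BK}. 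One should note the line bundle $\mathcal{L}(\lambda)$ for dominant $\lambda$ is nef on $\nicefrac{G}{B}$ and its restriction to the closed subvariety $H \cdot X(w)$ remains nef, which is what the vanishing theorem requires.

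For the surjectivity statement (2), I would use compatibility of the Frobenius splitting of $\nicefrac{G}{B}$ with the closed subvariety $H \cdot X(w)$ together with the vanishing ${\rm H}^1(\nicefrac{G}{B}, \mathcal{L}(\lambda) \otimes \mathcal{I}_{H\cdot X(w)}) = 0$: compatible splitting implies the ideal sheaf $\mathcal{I}_{H \cdot X(w)}$ is itself split-compatible, so the short exact sequence $0 \to \mathcal{I}_{H\cdot X(w)} \to \mathcal{O}_{\nicefrac{G}{B}} \to \mathcal{O}_{H\cdot X(w)} \to 0$ twisted by the nef bundle $\mathcal{L}(\lambda)$ has vanishing ${\rm H}^1$ of the first term (again by the B-splitting vanishing theorem applied to the compatibly split ideal), whence the restriction map on global sections ${\rm H}^0(\nicefrac{G}{B}, \mathcal{L}(\lambda)) \to {\rm H}^0(H \cdot X(w), \mathcal{L}(\lambda))$ is surjective; this is exactly \cite[Thm.\ 1.4.8(ii)]{BK}.

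The main obstacle — really the only non-formal point — is checking that the hypotheses of Corollary \ref{HX3} match conditions (1) and (2) of Theorem \ref{thm2} once $J = I$, together with confirming that condition (\ref{lie}) is vacuous in this case; everything else is an application of the already-established machinery of Frobenius splittings along ample divisors and the associated cohomology vanishing and surjectivity theorems. In particular, no new Frobenius-splitting construction is needed: Corollary \ref{cor1} already does all the work, and this corollary is essentially its $J = I$ specialization plus the standard cohomological corollaries of compatible splitting along an ample divisor.
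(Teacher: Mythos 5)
Your proposal is correct and matches the paper's (implicit) argument: the paper derives Corollary \ref{HX3} exactly as you do, by specializing Corollary \ref{cor1} to $J=I$, where relation (\ref{lie}) holds automatically, and then invoking the standard consequences of a compatible Frobenius splitting along an ample divisor (cf. \cite[Thm.\ 1.4.8]{BK}) for the vanishing and surjectivity statements. No discrepancy to report.
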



\begin{cor}
We keep the assumption in Corollary \ref{HX3}. Assume furthermore that $H$ is a spherical subgroup of $G$. Let $w \in W$ and $g \in G$ such that $H g B/B$ is open dense in ${H \cdot X(w)}$. Let $\l$ be a dominant weight of $T$ and $V(\l)$ be the Weyl module $\nabla (\l)^*$. Let $v_\l$ be a nonzero vector of weight $\l$ in $V(\l)$. Then ${\rm H}^0\big({H\cdot  X(w)}, \cl(\l)\big)^*$ is isomorphic to the $H$-submodule of $V(\l)$ generated by $g v_\l$. 
\end{cor}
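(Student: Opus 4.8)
The plan is to identify $\mathrm{H}^0(H\cdot X(w),\cl(\l))$ with an induced module and then dualize. By Corollary \ref{HX3}(2) the restriction map $\mathrm{H}^0(\nicefrac{G}{B},\cl(\l))\to\mathrm{H}^0(H\cdot X(w),\cl(\l))$ is a surjection of $H$-modules, so $\mathrm{H}^0(H\cdot X(w),\cl(\l))$ is a quotient $H$-module of $\nabla(\l)$. Dualizing, $\mathrm{H}^0(H\cdot X(w),\cl(\l))^*$ is an $H$-submodule of $\nabla(\l)^* = V(\l)$. The task is thus to show that this submodule is precisely the one generated by $gv_\l$. First I would recall that the open $H$-orbit in $H\cdot X(w)$ is $HgB/B$, and that $gB/B$ is a $T$-fixed-like point in the sense that the fibre of $\cl(\l)$ at $gB/B$ is the line $\kk\cdot(g v_\l)^{*}$ inside $V(\l)^{*}=\nabla(\l)$ paired against $gv_\l$; equivalently, evaluation of a section at $gB/B$ corresponds under the embedding $\nicefrac{G}{B}\hookrightarrow \PP(V(\l))$ (when $\l$ is regular; in general one uses the partial flag quotient) to pairing with $gv_\l$.

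The key step is the following. Let $N\subseteq V(\l)$ be the $H$-submodule generated by $gv_\l$, and let $N^{\perp}\subseteq \nabla(\l)$ be its annihilator. I claim $N^{\perp}$ is exactly the kernel of the composite $\nabla(\l)=\mathrm{H}^0(\nicefrac{G}{B},\cl(\l))\to\mathrm{H}^0(H\cdot X(w),\cl(\l))$. One inclusion is easy: a section $\sigma\in\nabla(\l)$ restricting to zero on $H\cdot X(w)$ vanishes on the dense orbit $HgB/B$, hence on every point $hgB/B$, hence pairs to zero with every $h\cdot(gv_\l)$, so $\sigma\in N^{\perp}$. For the reverse inclusion I would use that $N^{\perp}$ is an $H$-submodule of $\nabla(\l)$, so the induced map $N^{\perp}\to\mathrm{H}^0(H\cdot X(w),\cl(\l))$ has image an $H$-submodule; since $N^{\perp}$ consists of sections vanishing at $gB/B$ and hence (by $H$-equivariance applied to the generators, using that $N$ is $H$-generated by $gv_\l$) vanishing on all of $HgB/B$, the image is supported on the complement of the dense orbit, which is a proper closed subset, and therefore the image is zero because $\cl(\l)$ restricted to the (reduced, irreducible) variety $H\cdot X(w)$ has no nonzero section supported on a proper closed subset. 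Thus $\ker = N^{\perp}$ and $\mathrm{H}^0(H\cdot X(w),\cl(\l))\cong\nabla(\l)/N^{\perp}$, whose dual is $N$.

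I expect the main obstacle to be the precise bookkeeping of the identification between evaluation at $gB/B$ and pairing with $gv_\l$, together with the argument that no nonzero section of $\cl(\l)|_{H\cdot X(w)}$ is supported on the boundary: strictly speaking one wants that $\mathrm{H}^0$ of a line bundle on an irreducible variety injects into the sections over any dense open subset, which is immediate since the structure sheaf is a sheaf and $H\cdot X(w)$ is a variety (reduced), so a section vanishing on a dense open is zero. A secondary subtlety is whether $\l$ is regular: if not, $\cl(\l)$ is pulled back from $\nicefrac{G}{P}$ for a parabolic $P\supsetneq B$, and one must check that $gv_\l$ still detects the relevant line and that $N$ only depends on the $G$-orbit of the line $\kk gv_\l$, not on the chosen representative; this is handled by replacing $gB/B$ with its image in $\nicefrac{G}{P}$ and noting $\nabla(\l)=\mathrm{H}^0(\nicefrac{G}{P},\cl(\l))$ throughout. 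Finally, one should remark that the hypotheses of Corollary \ref{HX3} are exactly what guarantees surjectivity of $\nabla(\l)\to\mathrm{H}^0(H\cdot X(w),\cl(\l))$, so that the quotient description — and hence the dual submodule description — is valid for \emph{all} dominant $\l$ simultaneously.
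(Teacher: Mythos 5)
Your proposal is correct and follows essentially the same route as the paper: both use the surjectivity from Corollary \ref{HX3} to write ${\rm H}^0(H\cdot X(w),\cl(\l))$ as a quotient of $\nabla(\l)$, identify the kernel with the annihilator of the $H$-submodule generated by $gv_\l$ via the standard equivalence between vanishing of a section at $hgB/B$ and pairing to zero with $hgv_\l$ (together with density of the orbit and reducedness of $H\cdot X(w)$), and then dualize. The paper phrases the pointwise/pairing step through the $B$-equivariant evaluation $f\mapsto f(eB)$ rather than the embedding into $\PP(V(\l))$, but this is only a difference in bookkeeping, not in substance.
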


\begin{proof}
We follow the idea of the proof of  \cite[Cor.3.3.11]{BK}.
Let $M$ be the $H$-module generated by $g v_\l$. By Corollary \ref{HX3}, the restriction map 
$$\g : \nabla \big(\l \big)\to {\rm H}^0\big({H \cdot X(w)}, \cl(\l)\big),$$
is surjective. As $H g B/B$ is dense in $H \cdot X(w)$ we have
\begin{equation}
\label{kernel}
\begin{split}
\ker(\g) & =\{f \in \nabla (\l) : f_{\mid HgB/B} =0\} \\
&=\{f \in V(\l)^*:  ((g^{-1} h) f)(eB) =0, \text{ for all $h \in H$}\}.
\end{split}
\end{equation}
The central point is now that the $B$-equivariant map
$ \nabla \big(\l \big)\to \kk_{-\l},$
coincides with the map $f \mapsto f(eB)$. In particular,
$f$ is zero at $eB$ if and only if $v_\l(f) = 0$. Thus,
by (\ref{kernel}), $f$ is contained in $\ker(\g)$ if and 
only if $ ((hg)v_\lambda)(f)$ is zero for all $h \in H$. 
The kernel of $\g$ is therefore $(V(\l)/M)^*$. This
ends the proof.
\end{proof}

\section{Examples}

\subsection{}\label{Ex1} In this subsection, we discuss some cases where there is a splitting on $\nicefrac{G}{B} $ that is compatible with all the $H$-orbit closures. Let $(G, H)$ be one of the following: $(H, H)$, $(H \times H, H_{\rm diag})$, $(A_{2n+1}, C_n)$, $(D_n, B_{n-1})$, $(E_6, F_4)$, $(B_3, G_2)$. Let $B$ be a Borel subgroup of $G$ such that $B_H=H \cap B$ is a Borel subgroup of $H$. It is known that all such $B$ are conjugated by $H$ (see \cite[Prop 2.2]{Re}) and that all the $H$-orbit closure in $\nicefrac{G}{B}$ are of the form $\nicefrac{ \overline{H B w B}}{B}$ for some $w \in W$. By $\S$\ref{Donkin}(2), $(G, H)$ is a Donkin pair. It is also easy to check that $2 \rho_H-\rho \mid_{T_H}$ is dominant for $B_H$. Hence $(G, H, B)$ is admissible. By Corollary \ref{HX3}, there exists a  Frobenius $\mathcal L \big( (p-1) \rho \big)$-splitting  
of $\nicefrac{G}{B}$ that is compatible with all $H$-orbit closures. 

It is proved by N. Ressayre in \cite[Theorem A]{Re} that if $G$ is a complex reductive group and $H$ a closed quasi-simple subgroup of $G$, then $(G, H)$ is of minimal rank if and only if $(G, H)$ is (up to isomorphism) one of the pairs above. 

However, in positive characteristic, the pair $(G, H)$ is also of minimal rank, where $G=SL_2 \times SL_2$ and $H=\{(g, F(g)); g \in SL_2\} \subset G$. The closed $H$-orbit in $G/B \cong \mathbb{P}^1 \times \mathbb{P}^1$ is defined by the equation $X^p W-Y^p V=0$, where $X,Y$ are the coordinates of the first $\mathbb{P}^1$ and $V, W$ are the coordinates of the second $\mathbb{P}^1$. There is no Frobenius splitting on $\nicefrac{G}{B}$ that compatibly splits the closed $H$-orbit. 

\subsection{}\label{Ex2} Let $G=Sp_4$ and $B$ a Borel subgroup of $G$. Let $\a$ be the short simple root and $\b$ be the long simple root. We denote by $s_1$ and $s_2$ the simple reflections corresponding to $\a$ and $\b$ respectively. Let $H$ be the standard Levi subgroup corresponding to $\a$. The $H$-orbit closures on the flag variety of $G$ are described in the the following graph

\[\xymatrix{& & & X_1 \ar@{-}[lld] \ar@{=}[d] \ar@{-}[rrd] & & & \\ & X_2 \ar@{-}[d] & & X_3 \ar@{-}[d] & & X_4 \ar@{-}[d] \\ & X_5 \ar@{-}[ld] \ar@{-}[rd] & & X_6 \ar@{-}[ld] \ar@{-}[rd] & & X_7 \ar@{-}[ld] \ar@{-}[rd] \\ X_8 & & X_9 & & X_{10} & & X_{11}}\]

Here $X_8$ is the $H$-orbit of $B_1=B$, $X_9$ is the $H$-orbit of $B_2=s_2 B s_2$, $X_{10}$ is the $H$-orbit of $B_3=s_2 s_1 B s_1 s_2$ and $X_{11}$ is the $H$-orbit of $B_4=s_2 s_1 s_2 B s_2 s_1 s_2$. It is easy to see that $2 \rho_H-\rho \mid_{T_i \cap H}$ is dominant for $B_1 \cap H$ and $B_4 \cap H$ but is not dominant for $B_2 \cap H$ and $B_3 \cap H$. By Corollary \ref{cor1}, 

(1) There exists a  Frobenius $\mathcal L\big((p-1)\rho \big)$-splitting of the flag variety $\nicefrac{G}{B}$
 that  compatibly splits $X_2$, $X_5$ and $X_8$. We may even apply the second part of 
Corollary \ref{cor1} to obtain global $F$-regularity of $X_2$, $X_5$ and $X_8$.
In fact, $X_2$, $X_5$ and $X_8$ is just a subset of
the set of Schubert varieties so this is a well known result. 

(2) Similarly, there exists a Frobenius splitting of the flag variety $\nicefrac{G}{B_4}$ that 
compatible splits certain $H$-orbit closures $X_4'$, $X_7'$ and $X_{11}'$ (which are also
Schubert varieties in $\nicefrac{G}{B_4}$). By the natural 
identification of $\nicefrac{G}{B_4}$ with $\nicefrac{G}{B}$ this leads to a Frobenius 
 $\mathcal L\big((p-1)\rho \big)$-splitting of the flag variety $\nicefrac{G}{B}$
 that  compatibly splits $X_4$, $X_7$ and $X_{11}$. The 
varieties  $X_4$, $X_7$ and $X_{11}$ are not Schubert varieties in $\nicefrac{G}{B}$.

(3) Let $P_1$ and $P_2$ denote the minimal parabolic subgroups containing 
$B_2$. Fix notation such that $P_1$ corresponds to the short simple
root. Then, by Corollary \ref{cor1}, the variety $\nicefrac{H P_2}{ B_2}$ in 
$\nicefrac{G}{B_2}$  admits a Frobenius $\mathcal M_2$-splitting compatible 
with the orbit closure  $\nicefrac{H B_2}{ B_2}$. Here $\mathcal M_2$
is some ample line bundle on $\nicefrac{H P_2}{ B_2}$ which can be explicitly 
determined. In this case we cannot apply the strong part of Corollary \ref{cor1}.
Focusing on $P_1$ instead  we may conclude that $\nicefrac{H P_1}{ B_2}$ 
admits a Frobenius $\mathcal M_1$-splitting compatible with  $\nicefrac{H B_2}{ B_2}$.
Again $\mathcal M_1$ is some ample line bundle. In this case we may apply
the strong part of Corollary \ref{cor1} to obtain global $F$-regularity of 
both $\nicefrac{H P_1}{ B_2}$ and  $\nicefrac{H B_2}{ B_2}$. Transferring 
this information into $\nicefrac{G}{B}$ it means that $X_6$ as well as $X_5$
admits a Frobenius splitting, along an ample line bundle, which is compatible 
with $X_9$. Moreover, as $X_6$ corresponds to $\nicefrac{H P_1}{ B_2}$ 
we may conclude global $F$-regularity of  $X_6$. Notice that $X_6$ and 
 $X_9$ are not multiplicity-free in the sense of \cite{Br}. 

(4) Similar statement as for the pairs $(X_5,X_9)$ and $(X_6, X_9)$ are 
also satisfied for the pairs $(X_7,X_{10})$ and $(X_6, X_{10})$.

We do not know if $X_3$ admits a Frobenius splitting.

\subsection{}\label{Ex3} Let $(G, H)=(SL_n, SO_n)$ and $p \ge 3$. Then $(G, H)$ is a Donkin pair. Let $I=\{1, 2, \cdots, n-1\}$ be the set of simple roots of $G$ and $$J=\begin{cases} I-\{\frac{n}{2}\}, & \text{ if } 2 \ \mid n; \\ I-\{\frac{n-1}{2}\} \text { or } I-\{\frac{n+1}{2}\}, & \text{ if } 2 \nmid n. \end{cases}$$ Then $2 \rho_H-\rho_J \mid_{T_H}$ is dominant. By Corollary \ref{cor1}, $\nicefrac{H P_J}{B}$ admits a Frobenius splitting along an ample divisor which is compatible with all subvarieties 
$\nicefrac{\overline{H B w B}}{B}$ for $w \in W_J$. 

Notice that none of the codimension one $H$-orbit closures in $\nicefrac{G}{B}$ are multiplicity-free.

\subsection{} Let $(G, H)=(H \times H \times H, H_{\rm diag})$. Then $\rho \mid_{T_H}=3 \rho_H$ and $2 \rho_H-\rho \mid_{T_H}$ is not dominant. Moreover, $(G, H)$ satisfies the pairing criterion (see \cite[Example 8]{V}). It is easy to see that the subvarieties $\nicefrac{\overline{H B ( w_0^H, 1, 1) B}}{B}, \nicefrac{\overline{H B (1,  w_0^H, 1) B}}{B}, \nicefrac{\overline{H B (1, 1,  w_0^H) B}}{B}$ are the partial diagonals of $\nicefrac{G}{B}=\nicefrac{H}{B_H} \times \nicefrac{H}{B_H} \times \nicefrac{H}{B_H}$. By \cite[Exercise 3.5.3]{BK},  the flag variety of $G$ does not admit a Frobenius splitting which is compatible with all the partial diagonals. This proves the necessity  in Corollary \ref{HX3} of something like the condition 
that $2 \rho_H-\rho \mid_{T_H}$ is dominant.

\bibliographystyle{amsalpha}

\begin{thebibliography}{XXX}

\bibitem[BE]{BE} D. Barbasch and S. Evens, \emph{$K$-orbits on Grassmannians and a PRV conjecture for real groups},
J. Algebra 167 (1994), no. 2, 258--283. 

\bibitem[B1]{B1} M. Brion, \emph{On orbit closures of spherical subgroups in flag varieties}, Comment. Math. Helv., 2001, 76, 263-299.

\bibitem[B2]{Br} M. Brion, \emph{Multiplicity-free subvarieties of flag varieties}, Commutative algebra (Grenoble/Lyon, 2001), Contemp. Math. 331, Amer. Math. Soc., 2003, 13-23.

\bibitem[BK]{BK} M. Brion and S. Kumar, \emph{Frobenius Splittings Methods in Geometry and Representation Theory}, Progress in Mathematics (2004), Birkh\"auser, Boston.

\bibitem[Bru]{B} J. Brundan, \emph{Dense orbits and double cosets}, Algebraic groups and their representations (Cambridge, 1997), Kluwer Acad. Publ., 1998, 517, 259-274.


\bibitem[Do]{D} S. Donkin, \emph{Rational Representations of Algebraic groups}, Lecture Notes in Math. 1140 (1985), Springer-Verlag. 


\bibitem[HT2]{HT2} X. He and J. F. Thomsen, \emph{Frobenius splitting and geometry of $G$-Schubert varieties}, Adv. Math. 219 (2008), 1469--1512.

\bibitem[Kn]{K} A. Knutson, \emph{Frobenius splitting, point-counting, and degeneration}, arXiv:0911.4941.


\bibitem[LRT]{LRT} N. Lauritzen, U. Raben-Pedersen and J. F. Thomsen, \emph{$F$-regularity of Schubert varieties with applications to $\mathcal D$-modules} J. Amer. Math. Soc. 19 (2006), no. 2, 345--355.

\bibitem[MR]{MR} V. B. Mehta and A. Ramanathan, \emph{Frobenius splitting and cohomology vanishing for Schubert varieties}, Ann. of Math. (2) 122 (1985), no. 1, 27--40. 

\bibitem[M]{M} O. Mathieu, \emph{Filtrations of $G$-modules}, Ann. Sci. Ecole Norm. Sup. 23 (1990), 625--644. 


\bibitem[RR]{RR} S. Ramanan and A. Ramanathan, \emph{Projective normality of flag varieties and Schubert varieties}, Invent. Math. \textbf{79} (1985), 217--224. 

\bibitem[Re]{Re} N. Ressayre, \emph{Spherical homogeneous spaces of minimal rank}, Adv. Math. 224 (2010), 1784--1800. 

\bibitem[RS]{RS} R. W. Richardson and T. A. Springer, \emph{The Bruhat order on symmetric varieties}, Geom. Dedicata 35 (1990), no. 1-3, 389--436. 

\bibitem[RS2]{RS2} R. W. Richardson and T. A. Springer, \emph{Complements to: ``The Bruhat order on symmetric varieties''}, Geom. Dedicata 49 (1994), no. 2, 231--238. 

\bibitem[S]{S} K. E. Smith, \emph{Globally F-regular varieties: applications to vanishing theorems for quotients of Fano varieties}, Michigan Math. J. 48 (2000), 553--572. 

\bibitem[V]{V} W. Van der Kallen, \emph{Steinberg modules and Donkin pairs}, Transform. Groups 6 (2001), no. 1, 87--98. 


\end{thebibliography}

\end{document}